\def\smallunderbrace#1{\mathop{\vtop{\m@th\ialign{##\crcr
				$\hfil\displaystyle{#1}\hfil$\crcr
				\noalign{\kern3\p@\nointerlineskip}%
				\tiny\upbracefill\crcr\noalign{\kern3\p@}}}}\limits}
\let\saveLongrightarrow\Longrightarrow
\renewcommand*{\Longrightarrow}{%
	\mathrel{\rlap{\fontfamily{cmrx}\fontencoding{OT1}\selectfont=}%
		\hphantom{\saveLongrightarrow}%
		\llap{$\m@th\Rightarrow$}}}
\let\geq\geqslant
\let\leq\leqslant
\newcommand{\cl}{\mathsf{cl}}
\newcommand{\uq}{\mathbf U_q(\mathfrak g)}
\newcommand{\qfa}{R_q(G)}
\newcommand{\Q}{\mathbb{Q}}
\newcommand{\Z}{\mathbb{Z}}
\newcommand{\N}{\mathbb{N}}
\newcommand{\C}{\mathbb{C}}
\newcommand{\F}{\mathbb{F}}
\newcommand{\A}{\mathbf{A}}
\theoremstyle{definition}
\newtheorem{defi/}{Definition}[section]
\newenvironment{defi}
{%
	\pushQED{\qed}\begin{defi/}}
	{\popQED\end{defi/}}
\newtheorem{ex/}[defi/]{Example}
\newenvironment{ex}
{%
	\pushQED{\qed}\begin{ex/}}
	{\popQED\end{ex/}}
\theoremstyle{plain}
\newtheorem{prop}[defi/]{Proposition}
\newtheorem{thm}[defi/]{Theorem}
\newtheorem{cor}[defi/]{Corollary}
\newtheorem{lem}[defi/]{Lemma}
\theoremstyle{remark}
\newtheorem{rem}[defi/]{Remark} 
\theoremstyle{definition}
\newtheorem{theorem}{Theorem}
\def\@tvsp{\mathchoice{{}\mkern-4.5mu}{{}\mkern-4.5mu}{{}\mkern-2.5mu}{}}
\def\ltrivert{\left|\@tvsp\left|\@tvsp\left|}
\def\rtrivert{\right|\@tvsp\right|\@tvsp\right|}
\def\lbivert{\left|\@tvsp\left|} 
\def\rbivert{\right|\@tvsp\right|} 
\def\llangle{\left\langle\@tvsp\left\langle}
\def\rrangle{\right\rangle\@tvsp\right\rangle}
\newcommand{\I}{\mathbf{I}}
\newcommand{\uqb}{\mathbf{B}_{\boldsymbol{c}}}
\newcommand{\uqbl}{\mathbf{B}^\omega_{\boldsymbol{c},\boldsymbol{s}}}
\newcommand{\uqdis}{\mathbf{B}_{\boldsymbol{c}^\diamondsuit}}
\newcommand{\uqbmin}{\mathbf{B}_{\boldsymbol{-c}}}
\newcommand{\uqbs}{\mathbf{B}_{\boldsymbol{c,s}}}
\newcommand{\uqbt}{\mathbf{B}_{\boldsymbol{d,t}}}
\newcommand{\uqbschi}{\mathbf{B}^\chi_{\boldsymbol{c}}}
\newcommand{\qfaa}{R_q(\mathcal A)}
\newcommand{\qfaaws}{R_q(\mathcal A)^{W^\Sigma}}
\newcommand{\uqd}{\mathbf{B}_{\boldsymbol{d}}}
\newcommand{\uqds}{\mathbf{B}_{\boldsymbol{d,t}}}
\newcommand{\uqa}{\mathbf{B}_{\boldsymbol{\Phi_{\boldsymbol{a}}(c)},\boldsymbol{\Phi_{\boldsymbol{a}}(s)}}}
\newcommand{\uqat}{\mathbf{B}_{\boldsymbol{\Phi_{\boldsymbol{a}}(d)},\boldsymbol{\Phi_{\boldsymbol{a}}(t)}}}
\newcommand{\comp}{\mathscr{U}}
\newcommand{\Br}{\mathrm{Br}}
\newcommand{\Hom}{\mathrm{Hom}}
\newcommand{\End}{\mathrm{End}}
\newcommand{\Res}{\mathrm{Res}}
\newcommand{\qfachi}{\qfa^{\uqb}_\chi}
\newcommand{\qfachb}{^{\uqb^\chi}\qfa^{\uqb}_\chi}
\newcommand{\Addresses}{{
		\bigskip
		\footnotesize
		\textsc{IMAPP, Radboud Universiteit, P.O. Box 9010, 6500 GL Nijmegen, The Netherlands}\par\nopagebreak
		\textit{E-mail address}: \texttt{stein.meereboer@ru.nl}
		
}}
\newcommand{\Veranstaltung}{Algebraic Topology}
\newcommand{\Aufgabennr}{1}
\numberwithin{table}{section} 
\definecolor{shadecolor}{rgb}{0.8,0.8,0.8}
\newsavebox{\foobox}
\newcommand{\thickbar}{\mathpalette\@thickbar}
\newcommand{\@thickbar}[2]{{#1\mkern1.5mu\vbox{
			\sbox\z@{$#1\mkern-.5mu#2\mkern-.5mu$}%
			\sbox\tw@{$#1\overline{#2}$}%
			\dimen@=\dimexpr\ht\tw@-\ht\z@-.8\p@\relax
			\hrule\@height.8\p@ 
			\vskip\dimen@
			\box\z@}\mkern1.5mu}
}
\newcommand{\myrule} [3] []{
	\begin{center}
		\begin{tikzpicture}
			\draw[#2-#3, ultra thick, #1] (0,0) to (0.65\linewidth,0);
		\end{tikzpicture}
	\end{center}
}
\newcommand{\demph}[2][]{\emph{#2}\ifthenelse{\equal{#1}{}}{\index{#2|hfill}}{\index{#1|hfill}}} 
\def\namedlabel#1#2{\begingroup
	#2%
	\def\@currentlabel{#2}%
	\phantomsection\label{#1}\endgroup
}
\title{Symmetries for spherical functions of type $\chi$ for quantum symmetric pairs}
\author{ Stein Meereboer}
\begin{document}
\thispagestyle{empty}
\makeatletter
\begin{center}
{\huge \bfseries \textsf \@title}\\[6pt]
{\large\@author}\\
{\Addresses}
\end{center}
\makeatother
\begin{abstract}
	$\textsc{Abstract.}$ Let $\mathfrak{g}$ be a complex semisimple Lie algebra and let $\uq$ denote the associated Drinfel'd Jimbo quantized enveloping algebra. In this paper we study spherical functions of $\uq$ related to characters. We show invariance under the Wang-Zhang braid group operators and show relative Weyl group invariance, when restricted to the quantum torus.
\end{abstract}
\tableofcontents
\numberwithin{equation}{section}
\section{Introduction}
\subsection{Background}
There is a close connection between spherical functions on compact symmetric spaces and orthogonal polynomials. Our interest lies in studying quantum analogs of these objects in the framework of Gail Letzter's quantum symmetric pairs $(\uq,\uqbs)$ \cite{Letzter1999}. \\
One of the early motivations for studying quantum analogs of symmetric pairs was the study of quantum analogs of zonal spherical functions. Pioneering this study was Koornwinder's paper \cite{Koornwinder1993}, where zonal spherical functions related to quantum analogs of the symmetric pair $(SU(2),U(1))$ are interpreted as Askey-Wilson polynomials, which are a special class of Macdonald polynomials \cite{Macdonald2003}. Later, Letzter gave interpretations of zonal spherical functions on quantum symmetric pairs, as Macdonald polynomials. To be precise, when the root system is reduced, the restrictions of zonal spherical functions to a Cartan subalgebra equal Macdonald polynomials \cite{Letzter2004}. Over two decades later, the theory of quantum symmetric pairs has flourished. In particular, insights by Huanchen Bao and Weiqiang Wang have contributed substantially to the understanding of the structure of quantum symmetric pairs with respect to various aspects of the theory. In particular, the theory allows for various tools for studying harmonic analysis of these spaces. Although the general theory of quantum symmetric pairs has come a far way, the understanding of generalized spherical functions is less well developed. In this paper we apply the recent advancements to the study of $\chi$-spherical functions. \\
Classically, $\chi$-spherical functions related to characters of symmetric pairs are invariant under the action of the relative Weyl group. In the quantum setting, relative Weyl group symmetries have only been systematically studied for zonal spherical functions. In \cite{Letzter2003}, Gail Letzter shows that for each coideal subalgebra $\uqbs$ there exists a unique coideal subalgebra $\uqds$ such that the zonal spherical functions related to $(\uqbs,\uqds)$ are invariant under the  relative Weyl group when restricted to the Cartan subalgebra $\uq^0$. These results indicate that there should be invariance properties of $\chi$-spherical functions related to characters of quantum symmetric pairs.\\
Central to this paper are the recent developments by Weiqiang Wang and Weinan Zhang \cite{Wang2023}, where relative braid group symmetries on the coideal subalgebras $\uqbs$ are constructed. These symmetries do not preserve $\uq$ in general, but they preserve the quantized function algebra $\qfa$.
\subsection{The basic idea}
Let $\mathfrak{g}$ be a complex semisimple Lie algebra, and let $\Theta:\mathfrak{g}\to \mathfrak{g}$ be an involutive automorphism. We write $\mathfrak{k}=\{x\in \mathfrak{g}: \Theta(x)=x\}$ to denote the fixed point algebra of $\Theta$ and refer to $(\mathfrak{g},\mathfrak{k})$ as a symmetric pair. Recall that involutive automorphisms $\Theta$, up to conjugation, are determined by Satake diagrams $(\I,\I_\bullet,\tau)$. The theory of quantum symmetric pairs developed in \cite{Letzter1999} provides quantum group analogs of the pair of universal enveloping algebras $(\mathbf{U}(\mathfrak{g}),\mathbf{U}(\mathfrak{k}))$. A quantum symmetric pair $(\uq,\uqbs)$ consists of a Drinfel'd Jimbo quantum group $\uq$ and a coideal subalgebra $\uqbs\subset \uq$, that depends on a family of scalars $\boldsymbol{c},\boldsymbol{s}\in (\F^\times)^{\I\setminus \I_\bullet}$. Such a family of scalars is said to be balanced if $\boldsymbol{s}=0$ and $c_i=c_{\tau(i)}$ for each $i\in\I_\circ=\I\setminus \I_\bullet$ and we put $\mathbf{B}_{\mathbf{c,0}}=\uqb$.
Recent advancements show the existence of a relative braid group action $\Br(W^\Sigma)\curvearrowright \uqbs$ \cite{Wang2023}.
For balanced parameters, the relative braid group operators admit a factorization
$$\boldsymbol{T}_{i,-1}'(b)=\Upsilon_i \mathcal T_{i,-1}' (b)\Upsilon_i^{-1}\qquad\text{for}\qquad b\in \uqb,i\in \I_\circ$$
where $ \mathcal T_{i,-1}' $ is a re-scaled Lusztig braid group operator  \cite[\mbox{Thm 9.9}]{Wang2023} and $\Upsilon_i$ is the rank one quasi-$K$ matrix related to the rank one Satake subdiagram $\{i,\tau(i)\}\cup \I_\bullet$. 
As the rank one quasi-$K$ matrices act locally finitely on finite dimensional $\uq$ weight modules, we obtain a precomposition operator on the quantized function algebra $-\circ \boldsymbol{T}_{i,-1}': \qfa\to\qfa$ defined by $\varphi\mapsto \varphi\circ\boldsymbol{T}_{i,-1}'$. The study of this operator through the representation theory of $\uqb$ will be key in understanding the braid group symmetries on spherical functions related to characters.
\subsection{Results}
In this subsection, we present the main results of the paper.\\
\\
The first goal of the paper is to establish symmetries for characters of quantum symmetric pairs. Through the study of one-dimensional $\uqb$ modules and a case-by-case check, we establish the invariance of characters by the Wang-Zang braid group operators.
\begin{theorem}[Corollary \ref{cor:invariance}]
	Let $\boldsymbol{c}$ be a balanced parameter. Then for each character $\chi\in \widehat{\uqb}$ that occurs in a finite dimensional $\uq$ weight module and each $w\in \Br(W^\Sigma) $, it holds that $\chi\circ \boldsymbol{T}'_{w,-1}=\chi.$
\end{theorem}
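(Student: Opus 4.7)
The plan is to reduce the claim to a finite case-by-case verification in rank one, via the factorization of the Wang-Zhang operators.

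\emph{Reduction to simple generators.} Since the relative braid group $\Br(W^\Sigma)$ is generated by its simple elements, and since $w\mapsto \boldsymbol{T}'_{w,-1}$ is a group homomorphism by construction in \cite{Wang2023}, it suffices to prove $\chi\circ \boldsymbol{T}'_{i,-1}=\chi$ for each $i\in \I_\circ$.

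\emph{Using the balanced factorization.} Fix $i\in \I_\circ$. As $\boldsymbol{c}$ is balanced, the factorization $\boldsymbol{T}'_{i,-1}(b)=\Upsilon_i\,\mathcal T'_{i,-1}(b)\,\Upsilon_i^{-1}$ is available for $b\in \uqb$. Let $V$ be a finite dimensional $\uq$-weight module containing a $\chi$-eigenvector $v\in V$. Evaluating the factorization on $v$, the statement becomes
\[
\Upsilon_i\,\mathcal T'_{i,-1}(b)\,\Upsilon_i^{-1}v \;=\; \chi(b)\,v \qquad \text{for all } b\in \uqb.
\]
This isolates two sub-questions: (a) the action of $\Upsilon_i$ on the $\chi$-eigenline, and (b) the behaviour of the Lusztig-type twist $\mathcal T'_{i,-1}$ at the level of characters.

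\emph{Rank one case check.} The generator $\boldsymbol{T}'_{i,-1}$ is built from data of the rank one Satake subdiagram $\{i,\tau(i)\}\cup \I_\bullet$, so it is natural to run the verification type by type through the short finite list of rank one Satake diagrams. For each type I would (i) enumerate the admissible characters of the associated rank one coideal $\uqbi$, whose set is pinned down by the integrality conditions imposed by the balanced parameter $\boldsymbol c$; (ii) show that $\Upsilon_i$ acts by a scalar on each $\chi$-eigenline, making $\mathrm{Ad}(\Upsilon_i)$ invisible to $\chi$; (iii) compute $\chi\circ \mathcal T'_{i,-1}$ explicitly on the standard generators $B_j$ of $\uqb$ and compare with $\chi(B_j)$.

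The main obstacle is step (ii): the quasi-$K$-matrix $\Upsilon_i$ is a nontrivial weight-zero operator, and the fact that it acts as a scalar on each admissible $\chi$-eigenline is specific to the rigidity imposed by balancedness ($\boldsymbol s=0$ and $c_i=c_{\tau(i)}$) together with the very restrictive structure of one-dimensional $\uqb$-representations appearing inside finite dimensional weight modules. Once (ii) is established in every rank one type, step (iii) collapses to a finite explicit computation using the closed-form expression for $\mathcal T'_{i,-1}(B_j)$ on the generators, which can be read off in a uniform manner from \cite{Wang2023}.
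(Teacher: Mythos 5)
Your reduction to the simple generators $i\in\I_\circ$ and the idea of a rank-one case check match the paper's strategy in outline, but the way you propose to organize the case check has a real gap at your step (ii). The rank-one quasi-$K$ matrix $\Upsilon_i$ does \emph{not} in general act by a scalar on the line spanned by a $\chi$-spherical vector $v$. By Theorem \ref{thm:intertwiner} we have $\Upsilon_i=\sum_\mu\Upsilon_{i,\mu}$ with $\Upsilon_{i,\mu}\in\uq^+_\mu$ and $\Upsilon_{i,0}=1$, so $\Upsilon_i v = v + (\text{strictly higher-weight terms})$, which is not proportional to $v$ unless those terms happen to vanish. What the paper controls is only the \emph{combined} operator: it shows $(\boldsymbol{T}'_{i,-1})^{-1}v$ again spans a one-dimensional $\uqb$-submodule, and then invokes multiplicity freeness (Lemma \ref{lem:unique}, Lemma \ref{lem:tech3}) to reduce $\chi\circ\boldsymbol{T}'_{i,-1}=\chi$ to an equality on the generators $B_j$; the case-by-case then uses the closed $q$-commutator formulas for $\boldsymbol{T}'_{i,-1}(B_j)$ from \cite[Table 3]{Wang2023} directly, never disentangling $\Upsilon_i$ from $\mathcal T'_{i,-1}$. (The fact that, e.g., $(\mathcal T'_{\boldsymbol{r_i},-1})^{-1}\Upsilon_i^{-1}v$ is proportional to $v$, which appears in Proposition \ref{prop:rang1}, is a \emph{consequence} of the character invariance being proved here, not an input to its proof.)

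Two smaller remarks. First, your step (iii) suffers from the same confusion: you cannot separately ``compute $\chi\circ\mathcal T'_{i,-1}$ on the $B_j$'' because $\mathcal T'_{i,-1}(B_j)$ is not generally an element of $\uqb$, so $\chi$ cannot be applied to it termwise; only the full $\boldsymbol{T}'_{i,-1}(B_j)\in\uqb$ is amenable to $\chi$. Second, the paper does the case check only for the distinguished parameter $\boldsymbol{c}^\diamondsuit$ (Proposition \ref{prop:charinv}) and then transfers to arbitrary balanced $\boldsymbol{c}$ via the Hopf-algebra automorphism $\Phi_{\boldsymbol{a}}$ and the compatibility $\Phi_{\boldsymbol{a}}\circ\boldsymbol{T}'_{i,-1;\boldsymbol{c}^\diamondsuit}\circ\Phi_{\boldsymbol{a}}^{-1}=\boldsymbol{T}'_{i,-1;\boldsymbol{d}}$ --- a transfer your outline does not account for, although it is not where the main difficulty lies. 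The essential missing idea in your proposal is the multiplicity-freeness reduction (Lemma \ref{lem:tech3}), which is what allows one to check equality on the finitely many $B_j$ with the operator $\boldsymbol{T}'_{i,-1}$ left intact, rather than trying to peel off $\Upsilon_i$.
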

Let $\uqbs,\uqds\subset \uq$ be quantum symmetric pair coideal subalgebras, and let $\chi$ and $\eta$ be characters of $\uqbs$ and $\uqds$, respectively. We say that $\varphi\in \qfa$ is a $(\chi,\eta)$-spherical function if $\varphi$ transforms as
$$\varphi(bxb')=\chi(b)\varphi(x)\eta(b')\qquad\text{for all }  b\in \uqbs,b'\in \uqds,\text{ and } x\in \uq.$$
We write $\qfa_\chi^{\uqbs}$ to denote the space of $(\chi,\chi)$-spherical functions for $(\uqbs,\uqbs)$. Through studying the structure of $\chi$-spherical functions and the factorization of relative braid group operators on $\uq$-modules, we show that $\chi$-spherical functions are invariant under the Wang-Zhang braid group operators.
\begin{theorem}[Proposition \ref{prop:WZ}]	Let $\boldsymbol{c}$ be a balanced parameter. Then for each $\varphi \in \qfachi$ and each $w\in \Br(W^\Sigma)$, it holds that $\varphi\circ \boldsymbol{T}_{w,-1}'=\varphi$.
\end{theorem}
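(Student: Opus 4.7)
The plan is to reduce to the rank-one generators $\boldsymbol T'_{i,-1}$ with $i\in\I_\circ$ and then combine the character invariance of Corollary~\ref{cor:invariance} with a matrix-coefficient computation exploiting multiplicity one for $\chi$-spherical vectors. Since $\Br(W^\Sigma)$ is generated by the $\boldsymbol T'_{i,-1}$ and $\varphi\mapsto\varphi\circ\boldsymbol T'_{w,-1}$ is compatible with composition in $w$, it suffices to treat a single generator.

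First I check that precomposition by $\boldsymbol T'_{i,-1}$ stabilises $\qfachi$. Because $\boldsymbol T'_{i,-1}$ preserves $\uqb$, for $\varphi\in\qfachi$, $b,b'\in\uqb$, $x\in\uq$ one computes
\[
(\varphi\circ\boldsymbol T'_{i,-1})(bxb')
=\chi\!\bigl(\boldsymbol T'_{i,-1}(b)\bigr)\chi\!\bigl(\boldsymbol T'_{i,-1}(b')\bigr)(\varphi\circ\boldsymbol T'_{i,-1})(x)
=\chi(b)\chi(b')(\varphi\circ\boldsymbol T'_{i,-1})(x),
\]
where the last equality uses Corollary~\ref{cor:invariance}. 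The same calculation at the vector level shows that $A:=\boldsymbol T'_{i,-1}|_V$ preserves $V^\chi:=\{w\in V:b.w=\chi(b)w\}$ and, dually, $(V^*)^\chi$, for every finite dimensional $\uq$ weight module $V$.

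By Peter-Weyl every $\varphi\in\qfachi$ is a sum of matrix coefficients $\varphi_{\xi,v}(x)=\xi(x.v)$ with $v\in V^\chi$, $\xi\in(V^*)^\chi$, and $V$ irreducible. A direct computation yields
\[
\varphi_{\xi,v}\circ\boldsymbol T'_{i,-1}=\varphi_{\xi A,\,A^{-1}v},
\]
so it remains to identify this with $\varphi_{\xi,v}$. Invoking multiplicity one for $\chi$-spherical vectors, both $V^\chi$ and $(V^*)^\chi$ are at most one-dimensional; writing $A|_{V^\chi}=\lambda$ and $A|_{(V^*)^\chi}=\mu$, the tautology $(\xi A)(v)=\xi(Av)$ yields $\mu\xi(v)=\lambda\xi(v)$, forcing $\mu=\lambda$ by non-degeneracy of the pairing. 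Hence $\varphi_{\xi,v}\circ\boldsymbol T'_{i,-1}=\mu\lambda^{-1}\varphi_{\xi,v}=\varphi_{\xi,v}$.

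The main obstacle is the input from the last step: multiplicity one for $\chi$-spherical vectors together with non-degeneracy of the canonical pairing $V^\chi\times(V^*)^\chi\to\F$. These are natural quantum analogs of the Gelfand pair property for compact symmetric pairs and should be available in the quantum symmetric pair literature for admissible characters on balanced parameters, but deriving them here (possibly via the rank-one factorisation $\boldsymbol T'_{i,-1}=\Upsilon_i\mathcal T'_{i,-1}\Upsilon_i^{-1}$ and a case check on the rank-one Satake subdiagrams $\{i,\tau(i)\}\cup\I_\bullet$) is the point where the real structural work is concentrated.
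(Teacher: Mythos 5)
Your proposal is correct and follows essentially the same route as the paper: reduce to a single generator $\boldsymbol T'_{i,-1}$, rewrite $c_{f,v}\circ\boldsymbol T'_{i,-1}=c_{f\boldsymbol T'_{i,-1},(\boldsymbol T'_{i,-1})^{-1}v}$, use multiplicity one plus character invariance to see that $\boldsymbol T'_{i,-1}$ scales the spherical vectors, and cancel the two scalars via the pairing $\xi(v)\neq 0$ (the paper phrases this last cancellation as "evaluate at $1\in\uq$", which is the same observation as your tautology $(\xi A)(v)=\xi(Av)$). The two ingredients you flag as uncertain at the end are in fact already established in the paper: multiplicity one is Lemma \ref{lem:unique}, and non-degeneracy of the pairing of spherical vectors is Remark \ref{rem:nonzero} via Proposition \ref{prop:prop1}, so no further case-by-case work on rank-one Satake subdiagrams is needed here.
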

The main theorem of the paper generalizes Letzter's relative Weyl group invariance \cite[\mbox{Thm 5.3}]{Letzter2003} to general characters of the coideal subalgebra $\uqb$. We recall the necessary notions to state the invariance. Let $\mathcal A$ be the $\uq$-subalgebra generated by the $K_h$, where $h\in Y_\Theta=\{h\in Y: \Theta(h)=-h\}$, we refer to $\mathcal A$ as the quantum torus. We write $\Res:\qfa\to \Hom(\mathcal A,\F)$ to denote the restriction map to $\mathcal A$ and denote $\qfaa$ by the image of $\Res$. Recall that there is a natural action of the relative Weyl group $W^\Sigma$ on $\Hom(\mathcal A,\F)$. Let $\uqb$ be a quantum symmetric pair coideal subalgebra, and let $\chi\in \widehat{\uqb}$ be a character. Then we show the existence of a coideal subalgebra $\uqb^\chi$ and a character $\chi'\in \widehat{\uqb^\chi}$ such that, when restricted to $\mathcal A$, the $(\chi',\chi)$-spherical functions are relative Weyl group invariant.
We write $\qfachb$ to denote the space of $(\chi',\chi)$-spherical functions.
\begin{theorem}\label{thm:C}[Theorem \ref{thm:mainthm}]
	Let $\boldsymbol{c}$ be a balanced parameter. Then restriction to the quantum torus defines a $\F$-linear map
	$$\mathrm{Res}: \bigoplus_{\chi\in \widehat{\uqb}}{\qfachb}\to \qfaaws.$$
\end{theorem}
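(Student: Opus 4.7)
The plan is to reduce $W^\Sigma$-invariance on the quantum torus to braid-group invariance for $(\chi',\chi)$-spherical functions, combined with a direct computation of $\boldsymbol{T}'_{i,-1}$ on $\mathcal A$. Since $W^\Sigma$ is generated by simple reflections $s_i$ for $i\in\I_\circ$, it suffices to establish $s_i$-invariance of $\mathrm{Res}(\varphi)$ on $\mathcal A$ for each such $i$ and each $\varphi\in\qfachb$.

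The first step is to upgrade Proposition \ref{prop:WZ} from $\qfachi$ to the mixed sphericals $\qfachb$. By Corollary \ref{cor:invariance}, both $\chi\in\widehat{\uqb}$ and $\chi'\in\widehat{\uqb^\chi}$ are invariant under the Wang--Zhang operators, and the same argument used for sphericals with equal characters --- tracking the quasi-$K$-matrix $\Upsilon_i$ through the bimodule action via $\boldsymbol{T}'_{i,-1}=\Upsilon_i\mathcal T'_{i,-1}\Upsilon_i^{-1}$ --- then produces $\varphi\circ\boldsymbol{T}'_{w,-1}=\varphi$ for every $\varphi\in\qfachb$ and $w\in\Br(W^\Sigma)$.

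The second step is to compute $\boldsymbol{T}'_{i,-1}(K_h)$ for $h\in Y_\Theta$ and a simple generator $i\in\I_\circ$. The rescaled Lusztig operator $\mathcal T'_{i,-1}$ sends $K_h$ to a scalar multiple of $K_{s_i(h)}$, and conjugation by $\Upsilon_i$ introduces weight corrections whose factors land in $\uqb^\chi$ on the left and $\uqb$ on the right, yielding an identity of the shape
\[
\boldsymbol{T}'_{i,-1}(K_h)=\sum_j b'_j\,K_{s_i(h)}\,b_j,\qquad b'_j\in\uqb^\chi,\ b_j\in\uqb,
\]
with $\sum_j\chi'(b'_j)\chi(b_j)=1$; this last equality is exactly the compatibility that defines the twin pair $(\uqb^\chi,\chi')$ relative to $(\uqb,\chi)$. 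Combining the two steps, for $\varphi\in\qfachb$,
\[
\mathrm{Res}(\varphi)(K_h)=\varphi(K_h)=(\varphi\circ\boldsymbol{T}'_{i,-1})(K_h)=\sum_j\chi'(b'_j)\chi(b_j)\,\varphi(K_{s_i(h)})=\mathrm{Res}(\varphi)(K_{s_i(h)}),
\]
which is $s_i$-invariance on $\mathcal A$. Iterating over the simple reflections gives $W^\Sigma$-invariance, and $\F$-linearity of the restriction map is immediate, so the map lands in $\qfaaws$.

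The main obstacle is the second step: although $\mathcal T'_{i,-1}$ acts combinatorially on the torus, conjugation by $\Upsilon_i$ produces infinitely many higher-weight correction terms, and showing that these really do lie in $\uqb^\chi\cdot K_{s_i(h)}\cdot\uqb$ with characters collapsing to $1$ requires a rank-one reduction to the Satake subdiagram $\{i,\tau(i)\}\cup\I_\bullet$ together with a case-by-case analysis analogous to the one used for characters in the first main theorem of the paper.
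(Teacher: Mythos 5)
Your proposal takes a genuinely different route from the paper, but it has two gaps that in my view do not close.

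First, your Step 1 asserts that the argument of Proposition~\ref{prop:WZ} carries over from $\qfachi$ to $\qfachb$ because both $\chi$ and $\chi'$ are braid-invariant. That argument, however, relies on $\boldsymbol{T}'_{i,-1}$ being an automorphism of the coideal acting on \emph{both} sides: for $\varphi=c_{f,v}$ with $f$ a $\chi$-spherical vector and $v$ a $\chi$-spherical vector for the \emph{same} $\uqb$, one uses that $\boldsymbol{T}'_{i,-1}$ preserves $\uqb$ so that $f\boldsymbol{T}'_{i,-1}$ is again a $\uqb$-spherical vector. In $\qfachb$ the vector $f$ is spherical for $\uqb^\chi$, a different coideal (non-standard with a shifted parameter $\boldsymbol d$, $\boldsymbol t\neq 0$, per Proposition~\ref{prop:finaly}), and the operator $\boldsymbol{T}'_{i,-1}=\Upsilon_{i,\boldsymbol c}\mathcal T'_{\boldsymbol{r_i},-1;\boldsymbol c}\Upsilon_{i,\boldsymbol c}^{-1}$ is built from the quasi-$K$-matrix of $\uqb$, not of $\uqb^\chi$; it does not preserve $\uqb^\chi$. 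So $f\boldsymbol{T}'_{i,-1}$ is not a $\uqb^\chi$-spherical vector and the uniqueness step of the argument breaks. Second, your Step 2 posits a factorization $\boldsymbol{T}'_{i,-1}(K_h)=\sum_j b'_j\,K_{\boldsymbol{r_i}(h)}\,b_j$ with $b'_j\in\uqb^\chi$, $b_j\in\uqb$ and $\sum_j\chi'(b'_j)\chi(b_j)=1$. In fact $\boldsymbol{T}'_{i,-1}(K_h)=\Upsilon_i K_{\boldsymbol{r_i}h}\Upsilon_i^{-1}$, and both $\Upsilon_i$ and $\Upsilon_i^{-1}$ lie in the completion of $\uq^+$; there is no a~priori reason for the conjugation to decompose over the pair $(\uqb^\chi,\uqb)$, nor for the characters to sum to $1$. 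You flag this as "the main obstacle," but it is precisely the content that needs proof, and it is not established by the rank-one reduction alone.

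The paper resolves both points by working at the level of the Shapovalov form: Proposition~\ref{prop:finaly} shows that if $v$ is a $\chi$-spherical vector for $\uqb$ then its bar-conjugate $\overline v$ is a $\chi'$-spherical vector for $\uqb^\chi$, so $\varphi=c_{\overline v,v}=\bigl[x\mapsto(\overline v,xv)\bigr]$; then the one-sided relation $(\boldsymbol{T}'_{i,-1})^{-1}v=\vartheta(i)v$ (Lemma~\ref{lem:upsion}, Proposition~\ref{prop:charinv}) together with $\varrho$-contravariance of the form gives $\varphi(x)=\varphi(\varrho(\mathcal T'_{\boldsymbol{r_i},-1}(x)))$ directly, which for $x=K_h$ is exactly $\boldsymbol{r_i}$-invariance on the torus. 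No two-sided braid invariance of the mixed spherical is needed, and the quasi-$K$-matrix never has to be expanded in terms of the coideals. You also omit the passage from uniform balanced to arbitrary balanced parameters, which the paper handles via $\Phi_{\boldsymbol a}$ and the observation that the twist does not change values on the quantum torus (the weight factors cancel in $c_{\Phi_{\boldsymbol a}^\ast(f),\Phi_{\boldsymbol a}(v)}(K_h)$).
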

For $\chi=\epsilon$ one recovers Gail Letzter's relative Weyl group invariance for balanced parameters \cite[\mbox{Thm 5.3}]{Letzter2003}. Note that our proof of Gail Letzter's result is different from the original proof  \cite[\mbox{Thm 5.3}]{Letzter2003}. 
In Section \ref{sec:limit}, we address the problem of the existence of characters for the coideal subalgebra $\uqbs$. Recall that a symmetric pair $(\mathfrak{g},\mathfrak{k})$ is called irreducible if $\mathfrak{g}$ cannot be written as the
direct sum of two semisimple Lie subalgebras which both admit $\Theta$ as an involution. Furthermore, recall that an irreducible symmetric pair $(\mathfrak{g},\mathfrak{k})$ is said to be of Hermitian type if the Lie algebra $\mathfrak{k}$ has a nontrivial center.
\begin{theorem}[\ref{cor:herm2}]
	Let $(\boldsymbol{c},\boldsymbol{s})$ be a specializable parameter and let the symmetric pair $(\mathfrak{g},\mathfrak{k})$ be irreducible. Then the following three statements are equivalent:
\begin{enumerate}[(i)]
	\item There exist countably infinite many pairwise non-isomorphic one-dimensional $\uqbs$ modules that occur in finite dimensional $\uq$ weight modules.
	\item There exist non-trivial one-dimensional $\uqbs$ modules that occur in finite dimensional $\uq$ weight modules.
	\item The symmetric pair $(\mathfrak{g},\mathfrak{k})$ is of Hermitian type.
\end{enumerate}
\end{theorem}
Consequently, for an irreducible symmetric pair $(\mathfrak{g},\mathfrak{k})$ the space $\bigoplus\limits_{\chi\in \widehat{\uqb}}{\qfachb}$ decomposes into countably infinite many nontrivial constituents and if and only if $(\mathfrak{g},\mathfrak{k})$ is of Hermitian type. Otherwise, the only constituent arises from to the trivial representation. In particular, in the irreducible Hermitian case this ensures that Theorem \ref{thm:C} is nontrivial.
\subsection{Organization}
Sections 2-3 are of preparatory nature. In Section \ref{sec:quantumgrouips}, we fix notation and conventions on quantum groups and the quantized function algebra. Section \ref{sec:iota} introduces quantum symmetric pairs, quasi-$K$ matrices and the Wang-Zhang braid group operators, which we extend to operators on $\qfa$.
The main new results of the paper are contained in Section 4 and 5.
In Section \ref{sec:char}, we study the characters of $\uqbs$ that occur in irreducible left and right $\uq$ weight modules. We establish that the characters of $\uqbs$ occur multiplicity free and investigate their embedding. We show that characters of quantum symmetric pairs are invariant under the Wang-Zhang braid group operators. In Section \ref{sec:sph}, we study spherical functions in some generality. We show the existence and uniqueness of $\chi$-spherical functions. In Section \ref{sec:limit} we characterize the existence of one-dimensional $\uqbs$ modules occurring in $\uq$ highest weight modules. Section \ref{sec:barqfa} is the heart of the paper and here we combine the results of the previous sections to show that spherical functions related to characters are invariant under the Wang-Zhang braid group operators. The rest of the paper is devoted to the interpretation of these symmetries when restricted to the quantum torus.
Appendix \ref{Appendix} is devoted to completing the proof of Proposition \ref{prop:charinv} and appendix \ref{apenB} contains the proof of Lemma \ref{lem:lusztiglong}. Both proofs involve case-by-case checks. 
\begin{center}
	$\textsc{Acknowledgment}$
\end{center}
The author would like to thank Erik Koelink for his patience, guidance and for many valuable comments. The author would also like to thank Stefan Kolb for the hospitality, discussions and many valuable comments. The author is grateful to a referee for a careful reading and numerous suggestions and corrections.
\begin{center}
	$\textsc{Funding}$
\end{center}
My research was funded by NWO grant \texttt{OCENW.M20.108}. This article is based upon work from COST Action CaLISTA \texttt{CA21109} supported by COST (European Cooperation in Science and Technology).
\section{Quantum groups and bar involutions}\label{sec:quantumgrouips}
\subsection{Quantum groups}\label{subsec:qg}
We introduce notation as in \cite{Lusztig2010}. Let $\mathfrak{g}$ be be a complex semisimple Lie algebra with  Cartan matrix $C=(c_{ij})_{i,j\in \I}$. Let $D=\text{diag}(d_i\,:\,d_i\in \Z_{\geq 1}, \,i\in\I)$ be a symmetrizer, meaning that $DC$ is symmetric and $\gcd\{d_i\,:\, i\in\I\}=1$. Fix a set of simple roots $\Pi=\{\alpha_i\,:\,i\in \I\}$ and a set of simple coroots $\Pi^\vee=\{\alpha^\vee_i\,:\,i\in \I\}$. Let $\Z\I=\oplus_{i\in \I } \Z\alpha_i$ be the root lattice. Let $(\,,\,)$ be the normalized Killing form on $\Z\I$ so that the short roots have length $2$. The Weyl group $W$ is generated by the simple reflections $s_i:\Z\I\to \Z\I$, with $i\in \I$ defined by $s_i(\alpha_j)=\alpha_j-c_{ij}\alpha_i,$ for $j\in \I$. 
\\
\\
Let $q$ be an indeterminate and $\Q(q)$ be the field of rational functions in $q$ with coefficients in $\Q$. For each index $i\in\I$ and $n\in \Z_{\geq0}$, set
$$q_i:= q^{d_i},\qquad [n]_i:=\cfrac{q_i^n-q_i^{-n}}{q_i-q_i^{-1}}\qquad\text{and}\qquad [n]_{i}!:=\prod_{k=1}^n[k]_i.$$
Let $(Y,X, \langle \,,\, \rangle.\dots)$ be a root datum of type $(\I,\cdot)$; cf. \cite[\mbox{2.2}]{Lusztig2010}.  By definition, there exist embedding $\I\to X$, $i\mapsto \alpha_i$ and $\I\to Y$, $i\mapsto \alpha_i^{\vee}$. The relation between $\langle\,,\,\rangle$ and $(\,,\,)$ is $\langle i,j'\rangle=\frac{2(\alpha_i,\alpha_j)}{(\alpha_i,\alpha_i)}$, for $i,j\in \I$.
The quantum group $\uq$, associated to the root datum, is the unital associative algebra over $\Q(q)$, generated by the symbols $E_i,F_i$ for $i\in \I$ and $K_{h}$ for $h\in Y$, these symbols are subject to the following relations for $ i,j\in\I,$ and $ h,h_1,h_2\in Y:$
\begin{align*}
	K_0&=1,\\
	K_{h_1}K_{h_2}&=K_{h_1+h_2},\\
	K_{h}E_i&= q^{\langle h,\alpha_i\rangle}E_iK_h,\\
	K_{h}F_i&= q^{-\langle h,\alpha_i\rangle}F_iK_h,\\
	E_iF_j-F_jE_i&=\delta_{i,j}\cfrac{K_i-K_i^{-1}}{q_i-q_i^{-1}},
\end{align*}
as well as quantum Serre relations for $i\neq j$
\begin{align*}
	\sum_{r+s=1-a_{i,j}}(-1)^sE_i^{(r)}E_jE_i^{(s)}=\sum_{r+s=1-a_{i,j}}(-1)^s F_i^{(r)}F_jF_i^{(s)}=0.
\end{align*}
Here we use the notation
$$K_i:=K_{d_i\alpha_i^\vee},\qquad E_i^{(n)}:=\frac{1}{[n]_i!}E_i^n\qquad\text{and}\qquad F_i^{(n)}=\frac{1}{[n]_i!}F_i^n.$$
The quantum group $\uq$ has the structure of a Hopf-algebra $(\uq, \triangle,\epsilon,\iota,S,)$ via
\begin{align*}
	\triangle (K_i)&=K_i\otimes K_i,\\
	\triangle (E_i)&=E_i\otimes 1+K_i\otimes E_i,\\
	\triangle (F_i)&=1\otimes F_i+F_i\otimes K_i^{-1},\\
	\epsilon(E_i)&=\epsilon(F_i)=0,&& \epsilon(K_i)=1,\\
	\iota(f)&=f\cdot 1,\\
	S(E_i)&=-K_i^{-1}E_i,&&S(F_i)=-F_iK_i,&&S(K_h)=K_{-h},
\end{align*}
where $i\in\I$, $f\in \Q(q)$ and $h\in Y$.\\
\\
We refer to $\uq^+$, $\uq^-$ and $\uq^0$ as the $\Q(q)$ subalgebras generated by $E_i$, $F_i$ for $i\in \I$ and $K_{h}$ for $h\in Y$ respectively. We refer to involutive algebra anti-automorphisms as \textit{anti-involutions}. Let us record the (anti)-involutions 
\begin{enumerate}[(i)]
	\item There exists a $\Q(q)$-algebra anti-involution $\varrho:\uq\to\uq$ such that
	$$E_i\mapsto F_i,\qquad F_i\mapsto E_i,\qquad K_i\mapsto K_i,\qquad\qquad i\in\I.$$
	\item There exists a $\Q(q)$-algebra anti-involution $\sigma:\uq\to\uq$ such that
	$$E_i\mapsto E_i,\qquad F_i\mapsto F_i,\qquad K_i\mapsto K_i^{-1},\qquad\qquad i\in\I.$$
	\item There exists a $\Q(q)$-algebra involution $\omega:\uq\to\uq$ such that
$$E_i\mapsto F_i,\qquad F_i\mapsto E_i,\qquad K_i\mapsto K_i^{-1},\qquad\qquad i\in\I.$$
	\item There exists a $\Q$-algebra involution $\overline{\,\cdot\,}:\uq\to \uq$ such that
	$$E_i\mapsto E_i,\qquad F_i\mapsto F_i,\qquad K_i\mapsto K_i^{-1},\qquad q\mapsto q^{-1}.$$			
	We refer to the $\Q(q)$ involution mapping $q\mapsto q^{-1}$ as $\overline{\,\cdot\,}:\Q(q)\to \Q(q)$.
\end{enumerate}
We note that $\omega=\sigma\circ \varrho$.
\begin{rem}
	If we consider $\uq$ over the algebraic closure $\F$ of $\Q(q)$, all of these maps remain (anti)-involutions over $\F$.
\end{rem}
Checking that these maps are (anti)-involutions can be done by verifying that the relations are preserved, cf.
\cite[\mbox{4.6, 11.9}]{Jantzen1996}. We refer to the involution $\overline{\,\cdot\,}$ as the \textit{bar} involution. A $\Q$-linear operator on a $\Q(q)$ or $\F$ algebra is called \textit{anti-linear} if $q^m\mapsto q^{-m}$, for $m\in \Z$.
\subsection{Weight modules}\label{subsec:intromod}
The goal of this subsection is to recall the terminology and structure of finite dimensional $\uq$ weight modules and their bar involutions. 
A module $M$ is called a \textit{weight module} if $M$ has a decomposition of the form
$$M=\bigoplus _{\lambda\in X}M_{\lambda },\qquad\text{where}\qquad M_\lambda=\{m\in M \,:\, K_h m= q^{\langle h,\lambda \rangle}m, \, h\in Y\}.$$
We say that a vector $v\in M$ is a \textit{weight vector of weight $\lambda$} if $K_h v= q^{\langle h,\lambda \rangle}v$ for each $ h\in Y.$
A module $M$ is called a \textit{highest weight module} if there exists a weight vector $v\in M$ with the property that $\uq v=M$ and $E_iv=0$ for all $i\in \I.$ 
The irreducible finite dimensional highest weight left modules are parameterized by the dominant weights $X^+$, cf. \cite{Lusztig2010}. We write $L(\lambda)$ to denote the simple left module corresponding to a dominant weight $\lambda\in X^+$.
The theory of highest weight modules has a natural analog for right modules, cf. \cite[\mbox{3.5.7}]{Lusztig2010}.
\\
We denote by $\comp$ an algebraic completion of $\uq$, see \cite[\mbox{Sect 3}]{Balagovic2016} for the construction of $\comp$.
Let $M=L(\lambda)$ be an irreducible $\uq$ weight module and $v_\lambda$ be a choice of highest weight vector, then there exists a \textit{bar involution} $\overline{\,\cdot\,}^M: M\to M$ relative to $v_\lambda$  with the property that
$$\overline{X\cdot v_\lambda}^M:=\overline{X}\cdot v_\lambda\qquad\text{for}\qquad X\in \uq.$$
If it is clear from the context, we denote $\overline{\,\cdot\,}^M=\overline{\,\cdot\,}$. Analogously, for irreducible right $\uq$ weight modules $N$ there exist bar involutions relative to a choice of highest weight vector.
\subsection{Lusztig braid group operators}
In this section, we recall the Lusztig braid group operators, cf. \cite[\mbox{Ch. 5 \& 37}]{Lusztig2010}. Let $e=\pm 1$, and let $M$ be a $\uq$ weight module. The \textit{Lusztig braid group operators} are defined as $T'_{i,e}, T''_{i,e}: M_\mu \to M_\mu$, where
\begin{align*}
	T'_{i,e}(v) &= \sum_{a,b,c; \, a - b + c = \langle \alpha_i^\vee, \mu \rangle} (-1)^b q_i^{e(-ac + b)} F_i^{(a)} E_i^{(b)} F_i^{(c)} v \\
	T''_{i,e}(v) &= \sum_{a,b,c; \, a - b + c = \langle \alpha_i^\vee, \mu \rangle} (-1)^b q_i^{e(-ac + b)} E_i^{(a)} F_i^{(b)} E_i^{(c)} v,
\end{align*}
and the operators $T'_{i,e}$ and $T''_{i,e}$ satisfy the braid relations
$$
\underbrace{T'_{i,e} T'_{j,e} T'_{i,e} \dots}_{m_{ij} \text{ factors}} = \underbrace{T'_{j,e} T'_{i,e} T'_{j,e} \dots}_{m_{ij} \text{ factors}} \quad \text{and} \quad \underbrace{T''_{i,e} T''_{j,e} T''_{i,e} \dots}_{m_{ij} \text{ factors}} = \underbrace{T''_{j,e} T''_{i,e} T''_{j,e} \dots}_{m_{ij} \text{ factors}},
$$
where $m_{ij}$ denotes the order of $s_i s_j \in W$. If $w = s_{i_1} \dots s_{i_n} \in W$ is a reduced expression, there exist well-defined operators $T'_{w,e}$ and $T''_{w,e}$ given by
$$
T'_{w,e} = T'_{i_1,e} \cdots T'_{i_n,e} \quad \text{and} \quad T''_{w,e} = T''_{i_1,e} \cdots T''_{i_n,e}.
$$
We also use the symbols $T'_{i,e}$ and $T''_{i,e}$ to denote algebra automorphisms on $\uq$, cf. \cite[\mbox{Ch. 37}]{Lusztig2010}. The Lusztig braid group operators on the modules and on $\uq$ are related by
\begin{equation}\label{eq:lusztigaut}
T'_{i,e}(xv) = T'_{i,e}(x)T'_{i,e}(v) \, \text{and} \, T''_{i,e}(xv) = T''_{i,e}(x)T''_{i,e}(v) \quad \text{for} \quad v \in M, x \in \uq.
\end{equation}
Hence, the algebra automorphisms $T'_{i,e}$ and $T''_{i,e}$ on $\uq$ are given by conjugation by the invertible elements $T'_{i,e} \in \comp$ and $T''_{i,e} \in \comp$. Therefore, the Lusztig braid group operators on $\uq$ also satisfy the braid relations. By \cite[\mbox{Ch. 37.2}]{Lusztig2010}, the following relations hold.
\begin{align}\label{eq:lusztigrel}
	T''_{i,e}=(T'_{i,-e})^{-1}&&T''_{i,e}\circ \sigma= \sigma\circ T'_{i,-e}\\
	T''_{i,e}\circ \overline{\,\cdot\,}= \overline{\,\cdot\,}\circ T''_{i,-e}&&T'_{i,e}\circ \overline{\,\cdot\,}= \overline{\,\cdot\,}\circ\nonumber T'_{i,-e}\\
	T''_{i,e}\circ \omega= \omega \circ T'_{i,e}.\nonumber
\end{align}
\subsection{The quantized function algebra and bilinear forms}\label{subsec:qfa}
In this subsection, we recall the definition and structure of the quantized function algebra.
The \textit{quantized function algebra} $\qfa$ is the subspace of $\uq^{\ast}$ spanned by matrix coefficients for finite dimensional $\uq$ weight modules.
The quantized function algebra has the structure of a
Hopf-algebra $(\qfa, \triangle,\epsilon,\iota,S)$ which yields a Hopf-algebra duality between $\uq$ and $\qfa$, \cite[\mbox{Ch 7}]{Jantzen1996}. We continue by describing the structure of $\qfa$ as a $\uq$ bi-module. For each dominant weight $\lambda\in X^+$, the dual $L(\lambda)^\ast$ is a right $\uq$ weight module by setting
$$(fX)(v):=f(Xv)\qquad \text{where}\qquad X\in \uq,\, f\in L(\lambda)^\ast,\, v\in L(\lambda).$$
We define $c^{L(\lambda)}:L(\lambda)^\ast\otimes L(\lambda)\to \qfa$ by
$$c^{L(\lambda)}_{f,v}(X):=f(X\cdot v).\qquad\text{where}\qquad f\in L(\lambda)^\ast,v\in L(\lambda), \, X\in \uq.$$
If the module is clear from the context, we write $c_{f,v}$. With these conventions, the quantized function algebra has the quantum Peter-Weyl decomposition
$$\qfa=\bigoplus_{\lambda\in X^+} L(\lambda)^\ast\otimes L(\lambda)$$
as a $\uq$-bi-module, cf. \cite[\mbox{7.2}]{Kashiwara1993}. The dual $L(\lambda)^\ast$ has a bar involution defined by
\begin{equation}\label{eq:barinv}
	\overline{f}(v):= \overline{f(\overline{v})},\qquad \text{where}\qquad f\in L(\lambda)^\ast , v\in L(\lambda).
\end{equation}
Next we describe an explicit identification of $L(\lambda)^\ast$ via the Shapovalov form. For this, we fix notation regarding twisting modules with automorphisms.
\begin{defi}[Twisting modules with automorphisms]
	Let $\xi$ be an automorphism of $\uq$ and $\eta$ be an anti-automorphism of $\uq$. Let $M$ be a left module of $\uq$ and $N$ be a right module of $\uq$
	\begin{enumerate}[(i)]
		\item 	We define the left module $^\xi M$ by $^\xi M\cong M$ as vector spaces and 
		$X\cdot m:= \xi(X)\cdot m$ for $ X\in\uq, m\in{^\xi} M.$ 
		\item We define the right module $^\eta M$ by $^\eta M\cong M$ as vector spaces and
		$m\cdot  X:= \eta(X)\cdot m$ for $ X\in\uq, m\in {^\eta}M.$ 
		\item 	We define the right module $^\xi N$ by $^\xi N\cong N$ as vector spaces and
		$n\cdot X:=n \cdot \xi(X)$ for $ X\in\uq, n\in{^\xi}  N.$ 
		\item We define the left module $^\eta N$ by $^\eta N\cong N$ as vector spaces and
		$X\cdot n:= n\cdot\eta(X) $ for $ X\in\uq, n\in {^\eta}N.$ \qedhere
	\end{enumerate}
\end{defi}
We define $L^r(\lambda)$ to be the simple right module $^\varrho L(\lambda)$.
In \cite[\mbox{7.1.3}]{Kashiwara1993} Masaki Kashiwara notes that for each positive weight $\lambda\in X^+$, the irreducible weight module $L(\lambda)$ has a unique non-degenerate symmetric bi-linear form, relative to a choice of highest weight vector $v_\lambda \in L(\lambda)$, with $(v_{\lambda},v_{\lambda})=1$ and 
\begin{equation}\label{eq:Shapovalof}
	(v,X\cdot w)=(\varrho(X)\cdot v,w)\qquad \text{where}\qquad v,w\in L(\lambda),\, X\in\uq.
\end{equation}
We refer to this bilinear form as the \textit{Shapovalov form}. Via this identification we have $L(\lambda)^\ast\cong L^r(\lambda)$ as right $\uq$ modules.
There are two natural \textit{bar involutions} on the quantized function algebra.
We define the \textit{bar involution} $\psi:\qfa\to \qfa$
by 
$$\psi(\varphi)(X):= \overline{\varphi(\overline{X})},\qquad\text{where}\qquad X\in \uq,\varphi\in\qfa.$$
We define a second \textit{bar involution} $\bar{\,\,\cdot\,\,}^{\qfa}:\qfa\to \qfa$ as follows: if $\varphi=c_{f,v}$ with $f\in L^r(\lambda)$ and $v\in L(\lambda)$, then set $\overline{c_{f,v}}^{\qfa}:=c_{\overline{f}, \overline{v}}\in R_q(G)$ and extend anti-linearly to $\qfa$.
In \cite[\mbox{7.3.4}]{Kashiwara1993}, it is noted that the following diagram commutes
\begin{equation}\label{prop:kashi}
	\begin{tikzcd}
		{L(\lambda)^r\otimes_{\Q(q)}L(\lambda)} \arrow[r, "c^{L(\lambda)}"] \arrow[d, "\textendash\,\otimes\,\textendash"'] & R_q(G) \arrow[d, "\psi"] \\
		{L(\lambda)^r\otimes_{\Q(q)}L(\lambda)} \arrow[r, "c^{L(\lambda)}"]                                   & R_q(G)                    
	\end{tikzcd}.
\end{equation}
In particular, with respect to the Shapovalov form we have
$$(\overline{f},\overline{v})=c_{\overline{f},\overline{v}}(1)\stackrel{{( \ref{prop:kashi})}}{=}\psi(c_{f,v}(1))=\overline{c_{f,v}(\overline{1})}=\overline{(f,v)},\qquad\text{for}\qquad f\in L^r(\lambda),v\in L(\lambda).$$ 
If it is clear from the context, we leave out the superscript of the bar involution. 
\section{Quantum symmetric pairs and $\iota$-bar involutions}\label{sec:iota}
\subsection{Quantum symmetric pairs}\label{subsec:qsp}
In this section, we introduce quantum symmetric pairs in the sense of \cite{Letzter1999}. 
This section mostly follows \cite{Kolb2014}.
\\
\\
By $\text{Aut}(C)$, we denote the set of permutations $\tau:\I\to \I$ that satisfy $c_{ij}=c_{\tau(i)\tau(j)}$ for all $i,j\in \I$. The set of permutations that satisfy $\tau(\I_\bullet)=\I_\bullet$ is denoted  $\text{Aut}(C,\I_\bullet)$.
Let $\I_\bullet\subset \I$ be of finite type and $\tau\in\text{Aut}(C,\I_\bullet)$. Then the pair $(\I_\bullet,\tau)$ is called an \textit{admissible pair} if the following three conditions are satisfied:
\begin{enumerate}[(i)]
	\item $\tau^2=\text{id}_I$;
	\item the action of $\tau$ on $\I_\bullet$ coincides with $-w_\bullet;$
	\item if $i\in \I_\circ:=\I\setminus \I_\bullet$ satisfies $\tau(i)=i$ then $\langle \rho_\bullet^\vee,\alpha_j\rangle\in \Z,$ where $\rho^\vee_\bullet\in Y$ is the half sum of the positive coroots relative to $\mathfrak{g}_\bullet$.
\end{enumerate}
Given an admissible pair $(\I_\bullet,\tau)$, we denote $\Theta:=-w_\bullet \circ \tau$, which naturally acts on $X$, $Y$ and $\Z \I$. By $\uq_\bullet$, we denote the $\uq$ subalgebra generated by $E_i,F_i$ and $K_i$ with $i\in \I_\bullet$.  
Let us define	
$$\I_{\mathrm{ns}}:=\{i\in \I_\circ\,:\, \tau(i)=i\text{ and } c_{ij}=0\text{ for all } j\in \I_\bullet\}.$$
With this in place, we define parameters, which, as stated, are part of the datum for quantum symmetric pairs. A \textit{parameter} is a tuple $(\boldsymbol{c},\boldsymbol{s})$ where 
\begin{align*}
	\boldsymbol{c}\in \mathcal C &:=\{\boldsymbol{c}\in (\Q(q)^\times)^{\I_\circ}\,:\, c_i=c_{\tau(i)}\text{ if }\tau(i)\neq i\text{ and } (\alpha_i,\Theta(\alpha_i))=0\}\quad\text{and}\\
	\boldsymbol{s}\in \mathcal S &:=\{\boldsymbol{s}\in (\Q(q)^\times)^{\I_\circ}\,:\, s_j\neq 0\implies (j\in \I_{\mathrm{ns}}\text{ and  }c_{ij}\in -2\N_0\,\forall i\in \I_{\mathrm{ns}}\setminus \{j\})\}.
\end{align*}
\begin{enumerate}[(i)]
	\item We call a parameter $(\boldsymbol{c},\boldsymbol{s})$  \textit{balanced} if $\boldsymbol{s}=0$ and  $c_i=c_{\tau(i)}$ for each $i\in\I_\circ$.
	\item We call a parameter $(\boldsymbol{c},\boldsymbol{s})$ \textit{uniform} if
	\begin{align}\label{eq:uniform}
		&c_{i}=(-1)^{\langle 2\rho^\vee_\bullet,\alpha_i\rangle}q^{-(\alpha_i,w_\bullet \alpha_{\tau(i)}+2\rho_{\bullet})} \overline{c_{\tau(i)}}&&\text{for each }i\in \I_{\circ}.
	\end{align}
	\item We call a parameter $(\boldsymbol{c},\boldsymbol{s})$ \textit{admissible} if $\boldsymbol{c}$ is balanced, $c_i^{-1}=\overline{c_i}$ and
	\begin{align}\label{eq:admissible}
		&c_{i}=q^{-(\alpha_i,w_\bullet \alpha_{\tau(i)}+2\rho_{\bullet})} \overline{c_{\tau(i)}}&&\text{for each }i\in \I_{\circ}.
	\end{align}
\end{enumerate}
The \textit{distinguished parameter} $\boldsymbol{c}^{\diamondsuit}$ is the unique balanced parameter defined by
\begin{align}\label{eq:distinguished}
	c^\diamondsuit_i=-q^{-(\alpha_i,\alpha_i-\Theta(\alpha_i))/2}\qquad\qquad \text{for }i\in \I_\circ.
\end{align}
This parameter is central in the work \cite{Wang2023}. 
Let us denote by $\uq_{\Theta}^0$ the commutative algebra generated by $K_iK_{\tau(i)}^{-1}$ with $i\in \I_\circ$ and $K_j$ with $j\in\I_\bullet$.
\begin{defi}[Quantum symmetric pair]
	Let $(\I_\bullet,\tau)$ be an admissible pair and $(\boldsymbol{c},\boldsymbol{s})$ be a parameter, then we define the \textit{quantum symmetric pair coideal subalgebra} $\uqbs$ as the subalgebra of $\uq$ generated by $\uq_\bullet$, $\uq_{\Theta}^0$ and the elements\\
	$
	B_i=F_i+c_i T''_{w_\bullet,+1}(E_{\tau(i)})K_i^{-1}+s_iK_i^{-1}$ for each $  i\in \I_\circ$.\qedhere
\end{defi}
The algebra $\uqbs$ is a right coideal subalgebra, meaning that
$$\triangle (\uqbs)\subset \uqbs\otimes \uq.$$
We say that $\uqbs$ is \textit{standard} if $s_i=0$ for each $i\in\I_\circ$. If $\boldsymbol{s}=0$, we denote $\uqbs=\uqb$. Usually, we consider the quantum group and the coideal as algebras over $\Q(q)$. Quantum groups can be defined over other fields, such as the algebraic closure $\F$ of $\Q(q)$, cf. \cite{Wang2023}. In the setting of quantum groups over $\F$ one observes the following relation between distinct quantum symmetric pair coideal subalgebras $\uqb$ and $\uqd$, cf. \cite[\mbox{Sec 3,2}]{Kolb2014}.
Define the Hopf-algebra automorphism $\Phi_{\boldsymbol{a}} :\uq\to\uq$ on the $\F$-algebra $\uq$ by
\begin{align}\label{eq:iso}
	K_i\mapsto K_i\qquad E_i\mapsto a_i^{1/2}E_i\qquad\text{and}\qquad F_i\mapsto  a_i^{-1/2}F_i
\end{align}
for a scalar tuple $\boldsymbol{a}\in (\F^{\times})^{\I}$ with $a_i=1$ for $i\in\I_\bullet$.  
Then $\Phi_{\boldsymbol{a}}$ defines a $\F$-algebra isomorphism
$\Phi_{\boldsymbol{a}}:\uqb\to\uqd$ where $d_i=c_i\cdot a_i^{1/2}a_{\tau(i)}^{1/2}$ for $i\in\I$.\\
Let $(\I,\I_\bullet,\tau)$ be a Satake diagram. For each $i\in \I_\circ$, the datum $(\{i,\tau(i)\}\cup \I_\bullet,\I_\bullet,\tau)$ yields a Satake diagram, cf. \cite[\mbox{Rem 2.2}]{Dobson2019}. For each $i\in\I_\circ$, we denote by $\uq_i$ the quantum group generated by $E_j,F_j$ and $K_j$ for $j\in \{i,\tau(i)\}\cup \I_\bullet$. If $(\boldsymbol{c},\boldsymbol{s})$ is a parameter, then we denote by $\uqbs^i$ the coideal subalgebra for the parameter $(\boldsymbol{c}|_{\{i,\tau(i)\}},\boldsymbol{s}|_{\{i,\tau(i)\}})$ and Satake diagram $(\{i,\tau(i)\}\cup \I_\bullet,\I_\bullet,\tau)$. We view $\uq_i\subset \uq$ and $\uqbs^i\subset \uqbs$ as subalgebras. 
We define 
\begin{equation}\label{eq:thethainv}
	Y_{\Theta}=\{h\in Y:\Theta(h)=-h\}.
\end{equation} 
The \textit{quantum torus} $\mathcal A\subset \uq^0$ is the subalgebra generated by the $K_h$ with $h\in Y_\Theta$. Through restricting matrix entries to the quantum torus, we obtain a map $\Res: \qfa\to \Hom(\mathcal A,\F)$. We use $\qfaa$ to denote the image of $\Res$.
Next, we introduce an action of $\frac{1}{2}Y$ on $\qfa$, and on parameters $(\boldsymbol{c},\boldsymbol{s})$. We need this action to make sense of $\rho=\frac{1}{2}\sum_{\alpha>0}\alpha$ acting on parameters. Let $\widehat{\uq^0}$ be the group algebra generated by the lattice $\frac{1}{2}Y$. Let $K_h$ with $h\in\frac{1}{2}Y$ denote the basis elements of $\widehat{\uq^0}$.
We extend the Killing form $Y$ to $\frac{1}{2}Y$ in the canonical way. Using this extension, each $\uq$ left weight module $M$ becomes a $\widehat{\uq^0}$ module by defining
$$K_h v_{\mu}:=q^{\langle h,\mu\rangle}v_\mu \qquad\text{where}\qquad v_\mu\in M_\mu\text{ and } h\in \frac{1}{2}Y.$$
Similarly, if $M$ is a right module, we write the action from the right. For the $\uq$ weight module $\uq$, we write $\mathrm{ad}(K_h)$ to denote the action on $\uq$. 
Therefore, $\widehat{\uq^0}$ acts on $\qfa$ through
$$ c^{L(\lambda)}_{f,v}\triangleleft K_h(X):=c^{L(\lambda)}_{fK_h,v}(X),\qquad\qquad K_h\in \widehat{\uq^0},c^{L(\lambda)}_{f,v}\in\qfa,\text{ and }X\in\uq.$$
Lastly we introduce an action of $\frac{1}{2}Y$ on parameters. Let $h\in \frac{1}{2}Y$ and let $(\boldsymbol{c},\boldsymbol{s})$ be a parameter. Then, define the parameter $\mathrm{ad}(K_h)(\boldsymbol{c},\boldsymbol{s})$ by
$$ \mathrm{ad}(K_h)(c)_i=c_i q^{\langle h,\alpha_i-\Theta(\alpha_i)\rangle}\quad \text{and}\quad \mathrm{ad}(K_h)(s)_i =s_iq^{\langle h,\alpha_i\rangle}\qquad\text{for}\qquad i\in \I_\circ.$$
Using this notation, we have $\mathrm{ad}(K_h)(\uqbs)=\mathbf{B}_{\mathrm{ad}(K_h)(\boldsymbol{c},\boldsymbol{s})}$. We continue by introducing $\imath$-bar involutions and quasi-$K$ matrices.
\begin{thm}[$\imath$-Bar involution]\label{thm:barinvolution}
	\cite[\mbox{Thm 3.11}]{Balagovic2015}
	Let $(\uqb,\uq)$ be a quantum symmetric pair and $\boldsymbol{c}$ be uniform parameter, then there exists an anti-linear involution $\overline{\,\cdot\,}^{\uqb}:\uqb\to\uqb$, which we will refer to as the $\imath$-\textit{bar-involution}, that satisfies
	\begin{align*}
		K_i\mapsto K_i^{-1},&&E_i\mapsto E_i,&&F_i\mapsto F_i,&& \text{for }i\in \I_{\bullet},\\
		&&K_iK_{\tau(i)}^{-1}\mapsto K_i^{-1}K_{\tau(i)},&&\text{ and } B_i\mapsto B_i&& \text{for }i\in \I_{\circ}.
	\end{align*}
\end{thm}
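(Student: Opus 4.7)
The plan is to verify that the prescribed assignment on generators extends consistently to an anti-linear algebra involution on $\uqb$. I rely on the Letzter--Kolb presentation of $\uqb$: it is generated by $E_j, F_j, K_j^{\pm 1}$ for $j \in \I_\bullet$, by $K_h$ for $h \in Y_\Theta$, and by $B_i$ for $i \in \I_\circ$, subject to three classes of relations: (a) the quantum group relations of $\uq_\bullet$; (b) the commutation relations between $\uq_\Theta^0$ and the remaining generators; (c) the deformed quantum Serre relations among the $B_i$'s with coefficients depending on $(\boldsymbol{c}, \boldsymbol{s})$.

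The first step is to handle (a) and (b), both of which are straightforward. On $\uq_\bullet$ the proposed map coincides with the restriction of the standard bar involution $\overline{\,\cdot\,}: \uq \to \uq$, which is already known to be an anti-linear involution; this disposes of (a). For (b), the subalgebra $\uq_\Theta^0$ is abelian and sent to itself, since $\overline{K_i K_{\tau(i)}^{-1}} = K_i^{-1} K_{\tau(i)}$ is again a generator. The commutation relations between $\uq_\Theta^0$ and the generators $E_j, F_j, B_i$ amount to $q$-power eigenvalue relations; applying the map inverts both the $K_h$'s and $q$, so these relations are preserved.

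The main obstacle is (c). Applying the proposed map to a $B_i$-Serre relation produces the same relation but with each $c_i, s_i$ replaced by $\overline{c_i}, \overline{s_i}$ and with $T''_{w_\bullet,+1}(E_{\tau(i)})$ replaced by its bar image, which by \eqref{eq:lusztigrel} equals $T''_{w_\bullet,-1}(E_{\tau(i)})$. The uniform parameter condition $\overline{c_i} = c_i^{-1}$ together with
$$c_i = (-1)^{\alpha_i(2\rho_\bullet^\vee)} q^{-(\alpha_i,\, w_\bullet \alpha_{\tau(i)} + 2\rho_\bullet)} \overline{c_{\tau(i)}}$$
is designed precisely to absorb the resulting discrepancy between $T''_{w_\bullet,+1}$ and $T''_{w_\bullet,-1}$ into a rescaling that matches the original coefficients. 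This match must be checked type by type across the Satake classification, since the exact form of the $B_i$-Serre relation depends on whether $\tau(i) = i$, on the connectivity between $\{i, \tau(i)\}$ and $\I_\bullet$, and on the rank-two subsystem generated by $i$ and $j$. Once (c) is established, the involutivity $\overline{\overline{x}}^{\uqb} = x$ is immediate on generators. An alternative route that sidesteps the case-by-case verification is to first construct the quasi-$K$ matrix $\Upsilon$ in a completion of $\uq^+$ and then define $\overline{b}^{\uqb} := \Upsilon \cdot \overline{b}^{\uq} \cdot \Upsilon^{-1}$; the intertwining property of $\Upsilon$ then automatically produces an element of $\uqb$, and the existence of $\Upsilon$ is established by a recursion in weight which is solvable precisely under the uniform parameter hypothesis.
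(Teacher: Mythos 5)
The paper does not supply its own proof of this theorem; it is imported verbatim from \cite[Thm 3.11]{Balagovic2015}. Your first approach — write down the Kolb presentation of $\uqb$ by generators and relations and check that the prescription on generators preserves each relation — is indeed the strategy of that reference, and your treatment of (a) and (b) is correct. But your description of step (c) contains a confusion. In the presentation, $B_i$ is a \emph{generator}; the element $T''_{w_\bullet,+1}(E_{\tau(i)})$ does not appear anywhere in the deformed quantum Serre relations. What appears are polynomials in the $B_i$ together with lower-order corrections lying in $\uq_\bullet\uq_\Theta^0$, whose $\F$-coefficients involve $c_i$, $s_i$ and powers of $q$. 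Since the putative involution fixes each $B_i$ and each $E_j,F_j$ with $j\in\I_\bullet$, inverts $K_h$, and sends $q\mapsto q^{-1}$ (hence $c_i\mapsto\overline{c_i}$), the task is to show that the barred Serre relation is again a relation of $\uqb$; that is exactly where the uniform condition on $\boldsymbol c$ enters, and the verification is a rank-two, type-by-type matching of those scalar coefficients. Your phrasing — in which the concrete expression $B_i=F_i+c_iT''_{w_\bullet,+1}(E_{\tau(i)})K_i^{-1}+s_iK_i^{-1}\in\uq$ seems to be hit by the bar involution of $\uq$ — conflates two distinct maps: $\overline{\,\cdot\,}^{\uq}$ does \emph{not} fix $B_i$ and does not stabilize $\uqb$, which is precisely why $\overline{\,\cdot\,}^{\uqb}$ has to be built separately.

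Your ``alternative route'' is also logically backwards as presented. In \cite{Balagovic2015}, and in Theorem~\ref{thm:intertwiner} of this paper, the quasi-$K$ matrix $\Upsilon$ is \emph{defined} by the intertwining property $\overline{x}^{\uqb}\Upsilon=\Upsilon\overline{x}$, whose very statement presupposes $\overline{\,\cdot\,}^{\uqb}$, and the recursion for the weight components of $\Upsilon$ is driven by that property. One can instead write the recursion directly from $B_i\Upsilon=\Upsilon\,\overline{B_i}^{\uq}$ using the explicit formula for $B_i$ in $\uq$, but proving that the resulting system is consistent (so that $\Upsilon$ exists and the conjugation map lands in $\uqb$) reduces to the same Serre-relation computations with the same type-by-type checks; it does not sidestep the case analysis as you claim.
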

\begin{thm}[Intertwiner]\label{thm:intertwiner}
	\cite[\mbox{Thm 6.10.}]{Balagovic2016}
	Suppose that $\boldsymbol{c}$ is a uniform parameter, then there exists a unique element $\Upsilon=\sum _{\mu\in \N\I}\Upsilon _\mu \in \comp$ that satisfies $\Upsilon_\mu \in \uq^+_{\mu}$, $\Upsilon_0=1$ and  $\overline{x}^{\uqb}\Upsilon=\Upsilon \overline{x}$ for each $x\in \uqb$.
\end{thm}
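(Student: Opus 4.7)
The proof, following the strategy of Balagović–Kolb, proceeds by constructing $\Upsilon$ recursively, weight by weight, and then establishing uniqueness via a centralizer argument. Before starting, I would recall that the $\uq$-bar involution sends $\uqb$ onto a subalgebra of $\uq$ whose generators differ from the $\uqb$-generators only in the $K$-factors; the sought-after $\Upsilon$ intertwines these two bar involutions.

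For uniqueness, suppose $\Upsilon$ and $\Upsilon'$ both satisfy the conditions. Since $\Upsilon_0 = 1$ and $\Upsilon$ is upper triangular with respect to the $\N\I$-grading, it is invertible in $\comp$, so I can set $\Theta := \Upsilon^{-1}\Upsilon' = \sum_\mu \Theta_\mu$ with $\Theta_0 = 1$. Comparing the intertwining identities for $\Upsilon$ and $\Upsilon'$ yields $\overline{x}\,\Theta = \Theta\,\overline{x}$ for every $x \in \uqb$. Using the triangular decomposition of $\uq$ and commutation with $\overline{E_j},\overline{F_j}$ for $j \in \I_\bullet$ and with $\overline{B_i}$ for $i \in \I_\circ$, a weight-by-weight induction shows that all components $\Theta_\mu$ with $\mu \neq 0$ vanish, giving $\Upsilon = \Upsilon'$.

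For existence, I would construct the $\Upsilon_\mu \in \uq^+_\mu$ inductively on the height of $\mu$, starting with $\Upsilon_0 = 1$. It suffices to verify the intertwining property on a generating set of $\uqb$. Generators from $\uq_\bullet$ (which are bar-invariant under both $\uqb$-bar and $\uq$-bar on $E_j, F_j$, with compatible $K$-action) give conditions expressing $\uq_\bullet$-equivariance of each $\Upsilon_\mu$. The generators $K_iK_{\tau(i)}^{-1}$ force the support of $\Upsilon$ to lie in the lattice of $-\Theta$-invariant weights. The essential recursion comes from the identity $B_i\,\Upsilon = \Upsilon\,\overline{B_i}$: projecting onto the weight $\mu$-component, the leading term $F_i\Upsilon_{\mu+\alpha_i}$ is determined by $\Upsilon_\mu$ and lower weights, and because $F_i$ acts injectively on the relevant weight spaces of $\uq^+$ by the commutator $[E_i,\,\cdot\,]$ up to a nonzero scalar, this uniquely prescribes $\Upsilon_{\mu+\alpha_i}$.

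The main obstacle is the consistency of this system: the $\Upsilon_\mu$ produced by the $B_i$-recursion must simultaneously satisfy the analogous equations arising from all other $B_j$ and from the $\uq_\bullet$-generators. This boils down to verifying that the quantum Serre (and mixed) relations satisfied by the $B_i$ in $\uqb$ are compatible with the recursion, which reduces to a case-by-case check on rank-two Satake subdiagrams. The hypothesis that $\boldsymbol{c}$ is uniform enters precisely here, because uniformity is equivalent to the existence of the $\uqb$-bar involution of Theorem~\ref{thm:barinvolution}, which in turn is exactly the compatibility condition needed to close the recursion at each rank-two situation. Completeness of this rank-two verification, carried out in \cite[\mbox{Sec.\ 6}]{Balagovic2016}, is the technical heart of the proof.
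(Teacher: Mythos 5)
The paper does not contain a proof of this theorem; it is quoted verbatim from Balagovi\'c--Kolb (Theorem 6.10 of \cite{Balagovic2016}), so the comparison must be against that cited argument rather than anything in the present text. Your outline has the correct overall architecture: uniqueness by a centralizer/weight-induction argument, and existence by a weight-by-weight recursion driven by the $B_i$-intertwining relation, with the $\uq_\bullet$ and $\uq^0_\Theta$ generators imposing equivariance and support constraints. That matches Balagovi\'c--Kolb's structure.

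Two points, however, misdescribe the cited proof. First, the mechanism driving the recursion is not ``$F_i$ acting by the commutator $[E_i,\cdot\,]$.'' Because each $\Upsilon_\mu$ lies in $\uq^+_\mu$, the relevant identity is Lusztig's formula $F_i x - x F_i = \bigl({}_ir(x) K_i - K_i^{-1} r_i(x)\bigr)/(q_i - q_i^{-1})$ for $x\in\uq^+$, so projecting the $B_i$-intertwining relation onto weight components produces equations of the form $r_i(\Upsilon_\mu) = (\text{lower-height data})$; what determines $\Upsilon_\mu$ uniquely (once the right-hand sides are known) is the joint injectivity of the family of skew derivations $\{r_i\}_{i\in\I}$ on $\uq^+_\mu$ for $\mu\neq 0$, not the injectivity of a single $F_i$. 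Second, and more significantly, the consistency of this over-determined system in \cite{Balagovic2016} is not established by a case-by-case rank-two reduction. You appear to be conflating the construction of $\Upsilon$ with the earlier construction of the $\uqb$-bar involution of Theorem~\ref{thm:barinvolution} (cited from \cite{Balagovic2015}): it is there that the uniformity constraint on $\boldsymbol{c}$ arises from a rank-two verification of $q$-Serre-type relations. The solvability of the $\Upsilon$-recursion in Section 6 of \cite{Balagovic2016} is then proved by a direct skew-derivation compatibility argument that takes the already-existing bar involution on $\uqb$ as input. So you are right that uniformity enters because the $\uqb$-bar involution must exist for the statement to make sense, but the rank-two checking happens one level earlier, not in the construction of $\Upsilon$ itself.
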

We refer to the intertwiner as the \textit{quasi-$K$} matrix.
In particular it holds that $x\Upsilon=\Upsilon x$ for $x\in \uq_\bullet\uq^0_\Theta$.
For the intertwiner of Theorem \ref{thm:intertwiner} it holds that $\overline{\Upsilon}=\Upsilon^{-1}$, cf. \cite[\mbox{Prop 3.5}]{Wang2023}. 
\begin{prop}[$\imath$-bar involutions]\cite[Prop 5.1]{Bao2018}
	Let $\lambda\in X^+$ be a dominant weight and $\boldsymbol{c}$ be a uniform parameter. 
	Then $v\mapsto\Upsilon \overline{v}$ is an anti-linear involution of $L(\lambda)$ and $f\mapsto \overline{f}\Upsilon^{-1}$, $f\in L(\lambda)^\ast$, is an anti-linear involution of $L(\lambda)^\ast.$
\end{prop}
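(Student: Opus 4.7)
The plan is a short direct computation leveraging the identity $\overline{\Upsilon} = \Upsilon^{-1}$ (recorded just after Theorem \ref{thm:intertwiner}) together with the compatibility of the module bar involution with the bar involution on $\uq$.

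First, consider the map $v \mapsto \Upsilon \overline{v}$ on $L(\lambda)$. Anti-linearity is immediate: the bar on $L(\lambda)$ is anti-linear by construction, and $\Upsilon$ acts $\Q(q)$-linearly (its local finiteness on $L(\lambda)$ is guaranteed by the weight decomposition $\Upsilon = \sum_\mu \Upsilon_\mu$ with $\Upsilon_\mu \in \uq^+_\mu$). For the involution property I would first record that the module bar satisfies $\overline{X\cdot v} = \overline{X}\cdot \overline{v}$ for $X\in\uq$ and $v\in L(\lambda)$; this follows from the defining property on $v_\lambda$ together with the fact that the bar on $\uq$ is a $\Q$-algebra involution. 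Applying this and $\overline{\overline{v}} = v$, the computation reads
\[
\Upsilon \cdot \overline{\Upsilon\cdot \overline{v}} \;=\; \Upsilon \cdot \overline{\Upsilon}\cdot v \;=\; \Upsilon\cdot \Upsilon^{-1}\cdot v \;=\; v.
\]

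For the second claim, consider $f \mapsto \overline{f}\,\Upsilon^{-1}$ on $L(\lambda)^\ast$. Anti-linearity is again immediate, from the formula \eqref{eq:barinv} and the $\Q(q)$-linearity of right multiplication by $\Upsilon^{-1}$. For the involution property, one reads off from \eqref{eq:barinv} the right-module analogue $\overline{f\cdot X} = \overline{f}\cdot \overline{X}$, and uses that the bar on $\uq$ is a $\Q$-algebra involution, so that $\overline{\Upsilon^{-1}} = (\overline{\Upsilon})^{-1} = \Upsilon$. The computation then becomes
\[
\overline{\overline{f}\cdot \Upsilon^{-1}}\cdot \Upsilon^{-1} \;=\; f\cdot \overline{\Upsilon^{-1}}\cdot \Upsilon^{-1} \;=\; f\cdot \Upsilon\cdot \Upsilon^{-1} \;=\; f.
\]

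The only point that requires mild care, and the one I would expect to be the minor obstacle, is that $\Upsilon$ lies in the completion $\comp$ rather than in $\uq$ itself. Consequently the identities $\overline{\Upsilon} = \Upsilon^{-1}$ and the multiplicativity of the bar with $\Upsilon$ must be interpreted via the action on the finite-dimensional weight module $L(\lambda)$, where every partial sum $\sum_{|\mu|\le N}\Upsilon_\mu$ truncates to a finite operator and the module bar provides the necessary bridge. This is already built into the statement of Theorem \ref{thm:intertwiner} and into the identity $\overline{\Upsilon} = \Upsilon^{-1}$ cited from \cite{Wang2023}, so no new work is needed beyond invoking them.
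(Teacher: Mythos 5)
Your argument is correct, and since the paper cites this result from \cite[Prop 5.1]{Bao2018} without reproducing the proof, your reconstruction is exactly the standard argument one finds there. The two displayed computations
\[
\Upsilon\,\overline{\Upsilon\,\overline{v}} \;=\; \Upsilon\,\overline{\Upsilon}\,v \;=\; v,
\qquad
\overline{\overline{f}\,\Upsilon^{-1}}\,\Upsilon^{-1} \;=\; f\,\Upsilon\,\Upsilon^{-1} \;=\; f,
\]
are right, and you correctly identify the two inputs needed: the compatibility $\overline{Xv}=\overline{X}\,\overline{v}$ (which indeed follows from the defining property $\overline{Xv_\lambda}=\overline{X}v_\lambda$ and multiplicativity of the bar on $\uq$, once one notes well-definedness from the bar-stability of the maximal submodule of the Verma module) and the identity $\overline{\Upsilon}=\Upsilon^{-1}$ recorded after Theorem \ref{thm:intertwiner}. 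You also rightly flag the only delicate point — that $\Upsilon$ lives in $\comp$ rather than $\uq$ — and correctly observe that on the finite-dimensional weight module $L(\lambda)$ the action truncates to a finite sum, so the compatibility of the module bar with the bar on $\comp$ follows from the $\uq$-case applied graded-piece by graded-piece. For the dual-side computation, your verification that $\overline{f\cdot X}=\overline{f}\cdot\overline{X}$ (from the formula $\overline{f}(v)=\overline{f(\overline{v})}$) is the small check one must carry out and you do it correctly. No gaps.
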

We refer to these involutions as the $\imath$-bar involutions.
\subsection{$\iota$-Braid group operators and the relative Weyl group}\label{subsec:invo}
The goal of this subsection is to introduce the braid group operators of the quantum symmetric pair coideal subalgebra $\uqb$ as well as their action on irreducible $\uq$ weight modules. This subsection is based on \cite{Wang2023}.
\begin{rem}
	Unless stated otherwise, we assume the quantum symmetric pair to be standard and over $\F$.
\end{rem}
Let $\Sigma$ be the \textit{restricted root system} of $(\mathfrak{g},\Theta)$ and denote by $W^\Sigma$ the relative Weyl group, see \cite[\mbox{Sec 2.3}]{Dobson2019} for a comprehensive survey. Let $J\subset \I$ be a subset, then we denote by $w_J$ the longest element of the parabolic subgroup $W_J$. 
\begin{thm}\cite[\mbox{Prop 2.7}]{Dobson2019}\label{thm:dobsonkolb}
	The \textit{relative Weyl group} $W^\Sigma$ can be embedded in $W$, and the image is generated by the elements $\boldsymbol{r_i}:=w_{\{i,\tau(i)\}\cup \I_\bullet}w_\bullet^{-1}$ 
	where $i\in \I_\circ$.
\end{thm}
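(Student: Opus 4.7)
The plan is to construct the embedding using the natural identification of $W^\Sigma$ as a quotient (or subgroup) of $W$ coming from the $\Theta$-structure, and then verify that the specific elements $\boldsymbol{r_i}$ give a group-theoretic section of this identification.

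First, I would identify $W^\Sigma$ with the quotient $W^\Theta / W_\bullet$, where $W^\Theta := \{w \in W : w\Theta = \Theta w\}$ and $W_\bullet$ is the Weyl group of $\mathfrak{g}_\bullet$. This is classical (essentially Deodhar/Helgason): $W^\Theta$ preserves the $-1$ eigenspace $E$ of $\Theta$ on $X\otimes\R$, and $W_\bullet$ acts trivially on $E$, while $W^\Theta / W_\bullet$ acts faithfully on $E$ as $W^\Sigma$. The restricted simple roots are the projections $\bar\alpha_i = \tfrac12(\alpha_i - \Theta(\alpha_i))$ for $i \in \I_\circ$, with $\bar\alpha_i = \bar\alpha_{\tau(i)}$, so they are indexed by the $\tau$-orbits of $\I_\circ$.

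Next, I would show that each $\boldsymbol{r_i} = w_{\{i,\tau(i)\}\cup\I_\bullet}\, w_\bullet^{-1}$ lies in $W^\Theta$. The parabolic subset $\{i,\tau(i)\}\cup\I_\bullet$ is $\Theta$-stable (since $\Theta = -w_\bullet\tau$ preserves it), so its longest element $w_{\{i,\tau(i)\}\cup\I_\bullet}$ commutes with $\Theta$; similarly $w_\bullet$ commutes with $\Theta$ since $\I_\bullet$ is $\tau$-stable and $w_\bullet$ is central in its own action on $\Pi_\bullet$ up to $-1$. Hence $\boldsymbol{r_i} \in W^\Theta$. I would then compute the action of $\boldsymbol{r_i}$ on $E$ by reducing to the rank-one Satake subdiagram $(\{i,\tau(i)\}\cup\I_\bullet, \I_\bullet, \tau)$: in that subdiagram the restricted root system has rank one, and the longest element of the total parabolic is exactly $w_\bullet$ composed with the unique non-trivial restricted reflection; cancelling $w_\bullet^{-1}$ leaves the reflection in $\bar\alpha_i$. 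Thus modulo $W_\bullet$, $\boldsymbol{r_i}$ represents the simple reflection $s_i^\Sigma \in W^\Sigma$.

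The substantive step is upgrading this quotient-level identification to a genuine subgroup embedding: I must show the $\boldsymbol{r_i}$ satisfy the Coxeter relations of $W^\Sigma$ on the nose, not merely modulo $W_\bullet$. Here I would argue as follows: since $W^\Theta$ acts faithfully on $E$, and since $\boldsymbol{r_i}$ restricts to the standard simple reflection $s_i^\Sigma$ on $E$, any word $\boldsymbol{r}_{i_1}\cdots \boldsymbol{r}_{i_k}$ acts on $E$ as $s_{i_1}^\Sigma\cdots s_{i_k}^\Sigma$. Two such words therefore agree in $W$ iff they act identically on $E$ and have the same image in $W_\bullet$-cosets; the former is controlled by the Coxeter presentation of $W^\Sigma$, and the latter follows because each $\boldsymbol{r_i}$ is the specific lift with a fixed $W_\bullet$-representative (namely the product $w_{\{i,\tau(i)\}\cup\I_\bullet}w_\bullet^{-1}$ with a ``longest minus longest'' normalisation). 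Consequently the map $s_i^\Sigma \mapsto \boldsymbol{r_i}$ extends to an injective group homomorphism $W^\Sigma \hookrightarrow W$ with image generated by the $\boldsymbol{r_i}$.

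The main obstacle will be the braid-relation verification in the last paragraph: making sure that the lifts $\boldsymbol{r_i}$ are \emph{coherent} across different ranks, so that the rank-two Coxeter relations $(\boldsymbol{r_i}\boldsymbol{r_j})^{m_{ij}^\Sigma} = 1$ really hold in $W$ and not just modulo $W_\bullet$. I expect this to reduce, via restriction to rank-two Satake subdiagrams $\{i,\tau(i),j,\tau(j)\}\cup\I_\bullet$, to a finite case-by-case check through the classification of rank-two restricted root systems arising from Satake diagrams — which is precisely the kind of diagrammatic analysis carried out in \cite{Dobson2019}.
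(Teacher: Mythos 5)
The paper states this result as a citation to Dobson--Kolb without providing its own proof, so I compare against the argument there. Your reduction to $W^\Sigma\cong W^\Theta/W_\bullet$ and the verification that $\boldsymbol{r_i}\in W^\Theta$ are fine, but the final step --- upgrading from the quotient identification to a genuine subgroup embedding --- is where the proposal fails, and the anticipated case-by-case check of rank-two braid relations is also not how the cited result is actually proved.

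The assertion that ``$W^\Theta$ acts faithfully on $E$'' is false: $W_\bullet\subseteq W^\Theta$ acts trivially on the $(-1)$-eigenspace $E$ of $\Theta$; indeed $W_\bullet$ is exactly the kernel of the restriction $W^\Theta\to W^\Sigma$. Knowing that two words in the $\boldsymbol{r_i}$ act identically on $E$ therefore only says they differ by an element of $W_\bullet$, which is precisely the ambiguity you need to resolve; and the remark that ``each $\boldsymbol{r_i}$ is the specific lift with a fixed $W_\bullet$-representative'' does not help, since a product of distinguished lifts is \emph{a priori} merely some lift of the product. The correct, case-free argument is to observe that each $\boldsymbol{r_i}$ maps $\{\alpha_j:j\in\I_\bullet\}$ bijectively to itself: $w_\bullet^{-1}$ negates it, and $w_{\{i,\tau(i)\}\cup\I_\bullet}$ negates it back up to the internal diagram automorphism of the rank-one Satake subdiagram, which stabilises $\I_\bullet$. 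Hence the subgroup $G$ generated by the $\boldsymbol{r_i}$ lies in $N_\bullet:=\{w\in W : w\{\alpha_j:j\in\I_\bullet\}=\{\alpha_j:j\in\I_\bullet\}\}$, and $N_\bullet\cap W_\bullet=\{1\}$ because any nontrivial element of $W_\bullet$ sends some $\alpha_j$ with $j\in\I_\bullet$ to a negative root. Thus the restriction map $W^\Theta\to W^\Sigma$ is injective on $G$, and it is surjective on $G$ since the $\boldsymbol{r_i}$ map onto the simple reflections, giving $G\cong W^\Sigma$ directly with no appeal to the Coxeter presentation of $W^\Sigma$ or to rank-two classification.
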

By a slight abuse of notation we will almost always view $W^\Sigma$ as a subgroup of the Weyl group $W$. The relative Weyl group acts on $\Hom(\mathcal A,\F)$ by 
$$[w\cdot\varphi](K_h):= \varphi(K_{w\cdot h}),\qquad \qquad \text{for}\qquad w\in W^\Sigma,\, \varphi\in\Hom(\mathcal A,\F), h\in Y_\Theta$$
and extend linearly to $\mathcal A$.
Let $\Hom(\mathcal A,\F)^{W^\Sigma}$ denote the set of invariants. We call their elements \textit{Weyl group invariant}. 
We say that a matrix entry $\varphi\in \qfa$ is \textit{invariant under the affine Weyl group action} if $\varphi \triangleleft K_{\rho}\in\qfaaws$. This invariance is understood as
$$\varphi(K_h)=\varphi(K_{w(h-\rho)+\rho})\qquad \text{for}\qquad h\in Y_\Theta,w\in W^\Sigma.$$
\begin{defi}[Rescaled Lusztig braid group operators]
	Define for $i\in \I_\circ$ the \textit{rescaled Lusztig braid group operator} 
	\begin{align}
		\mathcal T'_{i,-1;\boldsymbol{c}}:=\Phi_{\overline{\boldsymbol{c}^{\diamondsuit}}\boldsymbol{c}}T_{i,-1}' \Phi_{\overline{\boldsymbol{c}^{\diamondsuit}}\boldsymbol{c}}^{-1}.
	\end{align}
	Here, $\Phi_{\overline{\boldsymbol{c}^{\diamondsuit}}\boldsymbol{c}}^{-1}$ is the operator from (\ref{eq:iso}), and we we understand $\overline{\boldsymbol{c}^{\diamondsuit}}\boldsymbol{c}$ as component-wise multiplication.
\end{defi}
Let $M$ be a finite dimensional $\uq$ weight module. Recall from (\ref{eq:lusztigaut}) that 
\begin{equation}\label{eq:braidlus}
T'_{\boldsymbol{r_i},-1}(x)v= T'_{\boldsymbol{r_i},-1}x (T'_{\boldsymbol{r_i},-1})^{-1}v,\qquad \qquad\text{for}\qquad x\in \uq, v\in M,\,i\in\I_\circ,
\end{equation} 
here the operator $T'_{\boldsymbol{r_i},-1}\in \comp$ acts locally finite on $M$ according to an element $T'_{\boldsymbol{r_i},-1}\in \uq$, cf. \cite[\mbox{Prop 37.1.2}]{Lusztig2010}. Using that $T'_{\boldsymbol{r_i},-1}\in \uq$, allows us to define the elements $\Phi_{\overline{\boldsymbol{c}^{\diamondsuit}}\boldsymbol{c}}(T'_{\boldsymbol{r_i},-1})\in \uq$ and $\Phi_{\overline{\boldsymbol{c}^{\diamondsuit}}\boldsymbol{c}}\big((T'_{\boldsymbol{r_i},-1})^{-1}\in \uq$. Thus we see that
$$\mathcal T'_{\boldsymbol{r_i},-1;\boldsymbol{c}}(x)v=\Phi_{\overline{\boldsymbol{c}^{\diamondsuit}}\boldsymbol{c}}(T'_{\boldsymbol{r_i},-1})x\Phi_{\overline{\boldsymbol{c}^{\diamondsuit}}\boldsymbol{c}}\big((T'_{\boldsymbol{r_i},-1})^{-1}\big)v\qquad\text{for}\qquad x\in \uq,v\in M,\, i\in\I_\circ.$$
From the above it follows that the operator $\mathcal T'_{\boldsymbol{r_i},-1;\boldsymbol{c}}$ extends to $\comp$ via
$$\mathcal T'_{\boldsymbol{r_i},-1;\boldsymbol{c}}:\comp\to \comp,\qquad \qquad x\mapsto \Phi_{\overline{\boldsymbol{c}^{\diamondsuit}}\boldsymbol{c}}(T'_{\boldsymbol{r_i},-1})x\Phi_{\overline{\boldsymbol{c}^{\diamondsuit}}\boldsymbol{c}}\big((T'_{\boldsymbol{r_i},-1})^{-1}\big)\quad\text{for}\quad x\in \comp,\, i\in\I_\circ.$$
Analogously to (\ref{eq:braidlus}) for $i\in \I_\circ$ we use $\mathcal T'_{\boldsymbol{r_i},-1;\boldsymbol{c}}$ to denote the operator $\Phi_{\overline{\boldsymbol{c}^{\diamondsuit}}\boldsymbol{c}}(T'_{\boldsymbol{r_i},-1})\in \End(M)$.
Let $i\in \I_\circ$. Then we set $\Upsilon_{i,\boldsymbol{c}}$ to be the the quasi-$K$ matrix associated to the rank one Satake-subdiagram induced by $i$, cf. \cite[\mbox{Sec 3.3}]{Wang2023}. 
\begin{thm}[Braid group operators]\cite[\mbox{Thm 9.10, Prop 10.1}]{Wang2023}\label{thm:braid}
	Let $\boldsymbol{c}$ be a balanced parameter. Then there exist a homomorphism
	\begin{align}
		\boldsymbol{T}'_{-,-1}:\Br(W^\Sigma)\to \mathrm{Aut}_{\mathrm{alg}}(\comp),
	\end{align}
	with the following two properties:
	\begin{enumerate}[(i)]
		\item When restricted to $\uqb$, it holds that $\boldsymbol{T}'_{-,-1}:\Br(W^\Sigma)\to \mathrm{Aut}_{\mathrm{alg}}(\uqb)$ is a homomorphism.
		\item For all $x\in \comp$ and $i\in\I_\circ$, it holds that	$\boldsymbol{T}_{i,-1}' (x)\Upsilon_{i,\boldsymbol{c}}=\Upsilon_{i,\boldsymbol{c}} \mathcal T'_{\boldsymbol{r_i},-1;\boldsymbol{c}}(x).$
	\end{enumerate}
\end{thm}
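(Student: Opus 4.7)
The plan is to define the operator $\boldsymbol{T}_{i,-1}'$ for each generator $\boldsymbol{r_i}\in W^\Sigma$ directly via the formula of (ii) and then verify the relative braid relations. Since $\Upsilon_{i,\boldsymbol{c}}\in\comp$ is invertible and $\mathcal T'_{\boldsymbol{r_i},-1;\boldsymbol{c}}\in\mathrm{Aut}_{\mathrm{alg}}(\comp)$ (a conjugate of the Lusztig operator by the Hopf automorphism $\Phi_{\overline{\boldsymbol{c}^\diamondsuit}\boldsymbol{c}}$), the assignment
\[
\boldsymbol{T}_{i,-1}'(x):=\Upsilon_{i,\boldsymbol{c}}\,\mathcal T'_{\boldsymbol{r_i},-1;\boldsymbol{c}}(x)\,\Upsilon_{i,\boldsymbol{c}}^{-1}
\]
is automatically an algebra automorphism of $\comp$, and property (ii) holds by construction.

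To obtain (i), I would show that $\boldsymbol{T}_{i,-1}'$ preserves $\uqb$. The key structural input is that, by Theorem \ref{thm:intertwiner} applied to the rank-one Satake subdiagram $\{i,\tau(i)\}\cup\I_\bullet$, the rank-one quasi-$K$ matrix $\Upsilon_{i,\boldsymbol{c}}$ intertwines the $\imath$-bar involution of the rank-one coideal subalgebra with the ordinary bar involution of $\uq_i$, and commutes with elements of $\uq_\bullet\uq^0_\Theta$. Combining this with the fact that $\mathcal T'_{\boldsymbol{r_i},-1;\boldsymbol{c}}$ preserves $\uq_\bullet$ and acts on the torus part through the $W^\Sigma$-action on $Y_\Theta$ from \eqref{eq:thethainv}, one verifies that $\boldsymbol{T}_{i,-1}'$ sends each generator of $\uqb$ back into $\uqb$. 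For $j\in\I_\bullet$ this is essentially a standard Lusztig-operator identity, while for $j\in\{i,\tau(i)\}$ it reduces to an explicit computation inside the rank-one coideal $\uqbi\subset\uq_i$.

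The main obstacle is the verification of the relative braid relations: for any two generators $\boldsymbol{r_i},\boldsymbol{r_j}\in W^\Sigma$ one must establish
\[
\underbrace{\boldsymbol{T}_{i,-1}'\,\boldsymbol{T}_{j,-1}'\,\boldsymbol{T}_{i,-1}'\cdots}_{m_{ij}^\Sigma\text{ factors}}=\underbrace{\boldsymbol{T}_{j,-1}'\,\boldsymbol{T}_{i,-1}'\,\boldsymbol{T}_{j,-1}'\cdots}_{m_{ij}^\Sigma\text{ factors}},
\]
where $m_{ij}^\Sigma$ is the order of $\boldsymbol{r_i}\boldsymbol{r_j}$ in $W^\Sigma$. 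Since both sides are algebra automorphisms of $\comp$, it suffices to check equality on a generating set of $\uqb$ together with, say, the $\Upsilon_{k,\boldsymbol{c}}$. This reduces the problem to the rank-two restricted subsystem corresponding to $\{i,\tau(i),j,\tau(j)\}\cup\I_\bullet$, where the Lusztig braid relations \eqref{eq:lusztigrel} can be combined with intertwiner identities for $\Upsilon_{i,\boldsymbol{c}}$, $\Upsilon_{j,\boldsymbol{c}}$ and the rescaled Lusztig operators. Because the restricted root system can be of a type different from the ambient root system, this last step proceeds by a case-by-case analysis over the possible rank-two restricted Satake diagrams, and this is where the bulk of the technical difficulty lies.

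Once (i) and these rank-two braid relations are established, the universal property of $\Br(W^\Sigma)$ produces the desired homomorphism $\boldsymbol{T}'_{-,-1}:\Br(W^\Sigma)\to\mathrm{Aut}_{\mathrm{alg}}(\comp)$, which restricts to $\mathrm{Aut}_{\mathrm{alg}}(\uqb)$ by (i) and still satisfies (ii) by construction.
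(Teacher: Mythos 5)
Your definition of $\boldsymbol{T}_{i,-1}'$ on $\comp$ and the observation that property (ii) then holds by construction coincide exactly with what the paper does. Where you diverge is in handling property (i): the paper simply cites \cite[Thm 9.10, Prop 10.1, Thm 10.6]{Wang2023} for the fact that $\boldsymbol{T}_{i,-1}'$ preserves $\uqb$ and that the relative braid relations hold, and then extends the Wang--Zhang operators to $\comp$ by the conjugation formula; you instead propose to re-derive both of these facts from scratch, verifying preservation of $\uqb$ generator-by-generator via the rank-one intertwiner and establishing the braid relations by a case analysis over rank-two restricted Satake diagrams. That plan is sound in outline --- indeed it mirrors the strategy of the Wang--Zhang paper itself --- but it amounts to reproving the main theorems of \cite{Wang2023}, which is a substantial undertaking (dozens of rank-two cases, each requiring explicit quasi-$K$-matrix computations). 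The paper's cite-and-define route is much more economical, and crucially it avoids one slightly delicate point in your sketch: your claim that it ``suffices to check equality on a generating set of $\uqb$ together with the $\Upsilon_{k,\boldsymbol{c}}$'' needs justification, since $\comp$ is a completion and is not finitely generated as an algebra by those elements; the correct statement (used implicitly by Wang--Zhang and by the paper) is that once the braid relations hold on $\uqb$, the factorization of the full quasi-$K$ matrix into rank-one pieces propagates them to the conjugation action on all of $\comp$.
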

\begin{proof}
	The first statement follows by \cite[\mbox{Thm 9.10, Prop 10.1\& Thm 10.6}]{Wang2023}. The intertwining properties for $x\in \uqb$ and $i\in \I_\circ$ are given by
	$
	\boldsymbol{T}_{i,-1}' (x)\Upsilon_{i,\boldsymbol{c}}=\Upsilon_{i,\boldsymbol{c}} \mathcal T'_{\boldsymbol{r_i},-1;\boldsymbol{c}}(x).
	$
	We take this factorization as a definition, i.e., we define the operator
	$$
	\boldsymbol{T}_{i,-1}' (x):=\Upsilon_{i,\boldsymbol{c}} \mathcal T'_{\boldsymbol{r_i},-1;\boldsymbol{c}}(x)\Upsilon_{i,\boldsymbol{c}}^{-1}\in\comp\qquad\text{for}\qquad x\in \comp,i\in\I_\circ.\qedhere
	$$
\end{proof}
\begin{thm}[Braid group operators on modules]
	Let $M$ be a finite dimensional $\uq$ weight module and $\boldsymbol{c}$ be a balanced parameter. Then there exist representations 
	\begin{align}
		\boldsymbol{T}'_{-,-1}:\Br(W^\Sigma)\to \mathrm{Aut}_\F(M)
	\end{align}
	satisfying
	\begin{align}\label{eq:braidrel}
		\boldsymbol{T}'_{i,-1}(xm)&=\boldsymbol{T}'_{i,-1}(x)\boldsymbol{T}'_{i,-1}(m)\qquad\text{for}\qquad x\in \uq,m\in M,i\in\I_\circ.
	\end{align}
\end{thm}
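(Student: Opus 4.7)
The plan is to transport the factorization of Theorem~\ref{thm:braid} to the module level: for $m \in M$, set
$$\boldsymbol{T}'_{i,-1}(m) := \Upsilon_{i,\boldsymbol{c}} \cdot \mathcal T'_{\boldsymbol{r_i},-1;\boldsymbol{c}}(m),$$
where $\mathcal T'_{\boldsymbol{r_i},-1;\boldsymbol{c}}$ acts on $M$ via the classical Lusztig action composed with the weight-preserving Hopf automorphism $\Phi_{\overline{\boldsymbol{c}^\diamondsuit}\boldsymbol{c}}$, and $\Upsilon_{i,\boldsymbol{c}}\in\comp$ acts locally finitely on the finite-dimensional weight module $M$ (its homogeneous components $\Upsilon_{i,\boldsymbol{c};\mu}\in \uq^+_\mu$ shift weights by $\mu\in \N\I$, and only finitely many contribute because the weights of $M$ are bounded below). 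This yields a well-defined invertible $\F$-linear operator on $M$.

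The compatibility (\ref{eq:braidrel}) is then immediate. Writing $\mathbf{t}_i := \Upsilon_{i,\boldsymbol{c}}\cdot \mathcal T'_{\boldsymbol{r_i},-1;\boldsymbol{c}} \in \comp$, Theorem~\ref{thm:braid}(ii) expresses the algebra automorphism $\boldsymbol{T}'_{i,-1}$ on $\comp$ as conjugation by $\mathbf{t}_i$, so that for $x\in\uq$ and $m\in M$
$$\boldsymbol{T}'_{i,-1}(x)\boldsymbol{T}'_{i,-1}(m) = (\mathbf{t}_i x \mathbf{t}_i^{-1})(\mathbf{t}_i m) = \mathbf{t}_i(xm) = \boldsymbol{T}'_{i,-1}(xm).$$

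The main remaining step, and the main obstacle, is the braid identity $\mathbf{t}_i\mathbf{t}_j\mathbf{t}_i\cdots = \mathbf{t}_j\mathbf{t}_i\mathbf{t}_j\cdots$ (matching lengths on each side, determined by $\Br(W^\Sigma)$) as operators on $M$. By Theorem~\ref{thm:braid}(i), this identity already holds as algebra automorphisms of $\comp$, so the two products differ by an invertible element of $\comp$ that commutes with all of $\uq$ and hence acts as a scalar on each $\uq$-isotypic component of $M$. To pin this scalar down to $1$, I would reduce to rank two — a single braid relation involves only two indices $i,j\in\I_\circ$, so one may restrict to the rank-two Satake subalgebra $\uq_i \cdot \uq_j$ and its action on the corresponding isotypic pieces of $M$ — and invoke the corresponding module-level braid identity from \cite[Thm 9.10, Prop 10.1, Thm 10.6]{Wang2023}, where the rank-two cases are carried out by explicit computation. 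Together with (\ref{eq:braidrel}), this assembles into the desired representation $\Br(W^\Sigma) \to \mathrm{Aut}_\F(M)$.
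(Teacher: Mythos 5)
Your proposal is correct and follows essentially the same route as the paper: define $\boldsymbol{T}'_{i,-1}$ on $M$ by the factorization $\Upsilon_{i,\boldsymbol{c}}\,\mathcal T'_{\boldsymbol{r_i},-1;\boldsymbol{c}}$, derive (\ref{eq:braidrel}) as a conjugation identity from Theorem \ref{thm:braid}, and cite \cite{Wang2023} for the braid relations. The only difference is that your intermediate detour — arguing that the two sides of a braid relation differ by a $\uq$-central scalar and then pinning it to $1$ by a rank-two reduction — is unnecessary, since \cite[Thm 10.5, Thm 10.6]{Wang2023} already establish the braid relations directly at the module level, which is what the paper invokes.
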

\begin{proof}
	The existence of the representation is proved in \cite[\mbox{Thm 10.5 \& Thm 10.6}]{Wang2023}. To be precise, the operator is defined by
	\begin{align*}
		\boldsymbol{T}'_{i,-1}(m)&=\Upsilon_{i,\boldsymbol{c}}\mathcal T'_{\boldsymbol{r_i},-1;\boldsymbol{c}}(m)\qquad\text{for}\qquad m\in M,i\in\I_\circ.
	\end{align*}
	The relation (\ref{eq:braidrel}) follows from the definition of $\boldsymbol{T}'_{i,-1}$ in Theorem \ref{thm:braid}.
\end{proof}
\begin{rem}
	To denote the parameter dependence, the Wang-Zhang braid group operators  and quasi-$K$ matrix have a subscript $\boldsymbol{c}$. However, if it is clear from the context, we leave out the subscript.
\end{rem}
\begin{defi}
	Let $\boldsymbol{c}$ be a balanced parameter. For each $\boldsymbol{r_i}\in W^\Sigma$, we define the precomposition operator 
	$-\circ\boldsymbol{T}'_{i,-1}:\qfa\to\qfa$ as follows: if $c^{L(\lambda)}_{f,v}\in L(\lambda)^\ast\otimes L(\lambda)$, then for $X\in \uq$ we set
	\begin{align*}
		c^{L(\lambda)}_{f,v} \circ \boldsymbol{T}'_{i,-1}(X)&:=c_{f \Upsilon_{i,\boldsymbol{c}},\Upsilon_{i,\boldsymbol{c}}^{-1} v}(\mathcal T'_{\boldsymbol{r_i},-1;\boldsymbol{c}}(X)),
	\end{align*}
	and we extend this linearly to $\qfa$. 
	\qedhere
\end{defi}
\section{Characters of $\boldsymbol{\uqbs}$ and spherical functions}\label{sec:repth}
\subsection{Characters of $\boldsymbol{\uqbs}$}\label{sec:char}
In this section we study the structure of characters that occur in restrictions of finite dimensional $\uq$ weight modules. Our goal is to understand their embedding and their relation relation with the $\iota$-bar involution.  
\\
\\
Let $\lambda\in X^+$. Recall that $v_\lambda$ is a nonzero vector in $L(\lambda)_\lambda$ and fix a nonzero bar invariant vector $v_{w_0\lambda}$ in $L(\lambda)_{w_0\lambda}$. Remark that both vectors are unique up to a nonzero scalar multiple. Similarly we denote by $v_\lambda^\ast$ a nonzero vector in $L(\lambda)^\ast$ dual to $v_\lambda$ and by $v_{w_0\lambda}^\ast\in L(\lambda)^\ast$ a nonzero vector dual to $v_{w_0\lambda}$. The following proposition is based on \cite[\mbox{Lem 3.3}]{Letzter2003} and \cite[\mbox{Prop 8.1}]{Kolb2014}. 
\begin{prop}\label{prop:weight}
	\begin{enumerate}[(i)]
		\item Let $\lambda\in X^+$ be a dominant weight and suppose that $v\in L(\lambda)$ spans a one-dimensional $\uqbs$ submodule. Then, up to a nonzero scalar multiple, $v$ equals $v_{\lambda}+w+ a v_{w_0\lambda}$ for a nonzero scalar $a\in \F$ and $w\in \bigoplus_{w_0\lambda<\mu<\lambda}L(\lambda)_\mu$.
		\item Let $\lambda\in X^+$ be a dominant weight and suppose that $f\in L(\lambda)^\ast$ spans a one-dimensional $\uqbs$ submodule. Then, up to a nonzero scalar multiple, $f$ equals \\$v_{\lambda}^\ast+w+ a v^\ast_{w_0\lambda}$ for a nonzero scalar $a\in \F$ and $w\in \bigoplus_{w_0\lambda<\mu<\lambda}L(\lambda)^\ast_\mu$.
	\end{enumerate}
\end{prop}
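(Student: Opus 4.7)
The plan is to carry out a weight-space analysis of $v=\sum_\mu v_\mu$, using that $\uqbs$ contains $\uq_\bullet$, the Cartan part $\uq^0_\Theta$, and the explicit generators $B_i$ for $i\in\I_\circ$. Let $\chi$ be the character of $\uqbs$ acting on the line spanned by $v$. Since $\chi$ restricts to a character of the Drinfel'd--Jimbo algebra $\uq_\bullet$, the commutation relation $[E_j,F_j]=(K_j-K_j^{-1})/(q_j-q_j^{-1})$ for $j\in\I_\bullet$ forces $\chi(E_j)=\chi(F_j)=0$ and $\chi(K_j)\in\{\pm1\}$. Hence $E_jv=F_jv=0$ for every $j\in\I_\bullet$, so each nonzero weight component $v_\mu$ is $\uq_\bullet$-invariant, and the Lusztig operator $T''_{w_\bullet,+1}$ acts as the identity on such $v_\mu$ (its defining sum collapses to the $(0,0,0)$-term).

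The next step is to identify the maximal weight $\mu_\ast$ of the support with $\lambda$. Using $B_iv=\chi(B_i)v$ for $i\in\I_\circ$ and $B_i=F_i+c_iT''_{w_\bullet,+1}(E_{\tau(i)})K_i^{-1}+s_iK_i^{-1}$, and noting that $T''_{w_\bullet,+1}(E_{\tau(i)})$ is homogeneous of weight $w_\bullet\alpha_{\tau(i)}=-\Theta(\alpha_i)>0$, I would extract the component at the weight $\mu_\ast-\Theta(\alpha_i)$, which lies strictly above $\mu_\ast$. The right-hand side vanishes there by maximality; on the left-hand side the $F_i$-contribution (which would originate from $v_{\mu_\ast+\alpha_i-\Theta(\alpha_i)}$, at a weight above $\mu_\ast$) and the $s_iK_i^{-1}$-contribution (originating from $v_{\mu_\ast-\Theta(\alpha_i)}$, also above $\mu_\ast$) both vanish. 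The surviving equation $c_iT''_{w_\bullet,+1}(E_{\tau(i)})K_i^{-1}v_{\mu_\ast}=0$, combined with invertibility of $T''_{w_\bullet,+1}$ on $L(\lambda)$ and $T''_{w_\bullet,+1}v_{\mu_\ast}=v_{\mu_\ast}$, reduces to $E_{\tau(i)}v_{\mu_\ast}=0$ for every $i\in\I_\circ$. Since $\tau$ permutes $\I_\circ$, together with $E_jv_{\mu_\ast}=0$ for $j\in\I_\bullet$ this makes $v_{\mu_\ast}$ a $\uq$-highest weight vector of $L(\lambda)$, so $\mu_\ast=\lambda$ and the top component is $v_\lambda$ after rescaling. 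A symmetric argument at the minimal weight $\mu_\ast$, extracting the component at $\mu_\ast-\alpha_i$, yields $F_iv_{\mu_\ast}=0$ for all $i\in\I_\circ$; hence $v_{\mu_\ast}$ is a $\uq$-lowest weight vector, $\mu_\ast=w_0\lambda$, and the coefficient $a$ of $v_{w_0\lambda}$ is nonzero.

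Part (ii) follows the same template on the right module $L(\lambda)^\ast$: since $\uqbs$ is a right coideal, $\triangle(\uqbs)\subset\uqbs\otimes\uq$ makes the right $\uqbs$-action on $L(\lambda)^\ast$ well-behaved; expanding $f=\sum_\mu f_\mu$ in right weight components (with $v_\lambda^\ast$ annihilated by every $F_i$ and $v_{w_0\lambda}^\ast$ by every $E_i$), the same extremal-weight analysis applied to $f\cdot B_i=\chi(B_i)f$ yields the claimed form. The main obstacle I anticipate is the reduction step $T''_{w_\bullet,+1}(E_{\tau(i)})v_{\mu_\ast}=0\Rightarrow E_{\tau(i)}v_{\mu_\ast}=0$; this uses the compatibility $T''_{i,e}(xv)=T''_{i,e}(x)T''_{i,e}(v)$ between the Lusztig operator on $\uq$ and its action on $L(\lambda)$ recalled in Subsection 2.3, together with the triviality $T''_{w_\bullet,+1}v_{\mu_\ast}=v_{\mu_\ast}$ coming from $\uq_\bullet$-invariance.
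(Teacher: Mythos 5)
Your proof is correct and follows essentially the same strategy as the paper: decompose $v$ into weight components, establish that the $\uq_\bullet$-action on the $\chi$-line is trivial so that $T''_{w_\bullet,\pm 1}$ fix each component, and then extract the extremal weight component of the equation $B_iv=\chi(B_i)v$ to deduce that the top (resp.\ bottom) component is a highest (resp.\ lowest) weight vector for $\uq$. The only cosmetic differences are that you derive $E_jv=F_jv=0$ for $j\in\I_\bullet$ via the defining relations of $\uq_\bullet$ rather than citing the classification of one-dimensional weight modules, and you carry out the minimal-weight and dual-module cases explicitly, whereas the paper dispatches them with ``analogously.''
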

\begin{proof}
	Let us write $v=\sum_\mu v_\mu$ for weight vectors $v_\mu\in L(\lambda)_\mu$. We define the set $\text{Supp}(v):=\{\mu\in X\,:\, v_\mu\neq 0\}$. Let $\eta$ be a maximal element of $\text{Supp}(v)$, i.e.  $\eta+\alpha_i\notin\text{Supp}(v)$ for simple roots $\alpha_i$. Denote by $\chi:\uqbs\to \F$ the representation corresponding to the one-dimensional $\uqbs $ module spanned by $v$.
	For $i\in \I_\bullet$, one has $E_i\in \uqbs$ and therefore
	\begin{align*}
		\chi(E_i)v_\eta+l.o.t.&=\chi(E_i)v\\
		&=E_iv\\
		&=E_iv_{\eta}+l.o.t.\,.
	\end{align*}
	Because the weight of $E_i v_\eta$ is greater than the weight of $v_\eta$, and because of the maximality of $\eta$, this leads to $E_iv_{\eta}=0$. Since the only one-dimensional weight module of $\uq_\bullet$ is $\epsilon$, we have $F_jv=0=E_jv$ for each $j\in \I_\bullet$. As a result, one has $(T''_{w_\bullet,+1})^{-1}v=v$. Using this fact, we compute
	for $i\in \I_\circ$ that
	\begin{align*}
		\chi(B_i)v_\eta+l.o.t.&=\chi(B_i)v\\
		&=B_iv\\
		&=T''_{w_\bullet,+1}(E_{\tau(i)}) K_{i}^{-1}v_{\eta}+l.o.t.\\
		&=q_i^{-\langle \alpha_i^\vee,\eta\rangle}T''_{w_\bullet,+1}(E_{\tau(i)}(T''_{w_\bullet,+1})^{-1}v_{\eta})+l.o.t.\\
		&=q_i^{-\langle \alpha_i^\vee,\eta\rangle}T''_{w_\bullet,+1}(E_{\tau(i)}v_{\eta})+l.o.t.\,.
	\end{align*}
	Applying $(T''_{w_\bullet,+1})^{-1}$ yields
	\begin{align*}
		(T''_{w_\bullet,+1})^{-1}\chi(B_i)v_\eta+l.o.t&=\chi(B_i)v_\eta+l.o.t.\\
		&=q_i^{-\langle \alpha_i^\vee,\eta\rangle}E_{\tau(i)}v_{\eta}+l.o.t..
	\end{align*}
	Again, because the weight of $E_{\tau(i)}v_\eta$ is greater than the weight of $v_\eta$ and because of maximality of $\eta$, this leads to $E_{\tau(i)}v_{\eta}=0$. We conclude that $E_iv_\eta=0$ for each $i\in \I$, which means that $\eta$ is equal to the highest weight $\lambda$. The remainder of the statements are proved analogously.
\end{proof}
If $\chi\in \widehat{\uqb}$, then we refer to the representation space of $\chi$ as $V_\chi\cong \F$.
\begin{lem}\label{lem:unique}
	Let $\lambda\in X^+$ be a dominant weight and $\chi\in \widehat{\uqbs}$ be a character. Then the modules $L(\lambda)$ and $L(\lambda)^\ast$ each contain at most one $1$-dimensional $\uqbs$ submodule of type $\chi$. 
\end{lem}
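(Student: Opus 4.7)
The plan is to deduce the lemma directly from Proposition \ref{prop:weight} via a short linear-algebra argument by contradiction. Suppose $L(\lambda)$ contained two distinct $1$-dimensional $\uqbs$-submodules $V_1, V_2$, both of type $\chi$. By Proposition \ref{prop:weight}(i), each $V_j$ is spanned by a vector whose weight-space expansion has nonzero coefficient at $v_\lambda$, so after rescaling I may pick generators of the form $v_j = v_\lambda + w_j + a_j v_{w_0\lambda}$ with $a_j \in \F^\times$ and $w_j \in \bigoplus_{w_0\lambda < \mu < \lambda} L(\lambda)_\mu$, for $j = 1, 2$.

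The key step is to note that $V_1 + V_2$ is a $2$-dimensional $\uqbs$-submodule on which $\uqbs$ acts by a single copy of the character $\chi$: since $V_1 \cap V_2 = 0$ (two distinct $1$-dimensional subspaces meet trivially) and $\uqbs$ acts by $\chi$ on each summand separately, every vector of $V_1 + V_2$ is a $\chi$-eigenvector. Consequently, \emph{every} nonzero element of $V_1 + V_2$ spans a $1$-dimensional $\uqbs$-submodule of type $\chi$. Applying this to $v := v_1 - v_2$, which is nonzero by the linear independence of $v_1$ and $v_2$ (forced by $V_1 \neq V_2$ together with our common normalization at $v_\lambda$), we obtain a generator of a $\chi$-submodule whose $v_\lambda$-coefficient vanishes, directly contradicting Proposition \ref{prop:weight}(i). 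Hence uniqueness holds in $L(\lambda)$, and the claim for $L(\lambda)^\ast$ follows verbatim by substituting Proposition \ref{prop:weight}(ii) and replacing $v_\lambda$ by $v_\lambda^\ast$.

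I do not anticipate a genuine obstacle: the lemma is essentially the cleanest possible corollary of the normal form supplied by Proposition \ref{prop:weight}. The only point demanding attention is the elementary verification that the linear span of two $\chi$-isotypic lines remains $\chi$-isotypic, which is what permits the offending vector $v_1 - v_2$ to be fed back into the proposition to derive the contradiction.
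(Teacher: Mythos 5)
Your argument is correct and is essentially the proof in the paper: both take two $\chi$-spherical vectors normalized via Proposition \ref{prop:weight} to have $v_\lambda$-coefficient equal to $1$, observe that their difference would again be a $\chi$-eigenvector, and conclude from Proposition \ref{prop:weight} that this difference must vanish since it has no $v_\lambda$-component. The detour through the two-dimensional space $V_1 + V_2$ is unnecessary (that $v_1 - v_2$ is a $\chi$-eigenvector follows directly from linearity), but it does no harm.
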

\begin{proof}
	Suppose that $v,w\in L(\lambda)$ span one-dimensional dimensional modules with the property that 
	$$b v=\chi(b)v\qquad\text{and}\qquad b w=\chi(b)w\qquad \text{for all }b\in\uqbs.$$
	As a result of Proposition \ref{prop:weight}, we may assume that $v=v_\lambda+l.o.t$ and $w=v_\lambda+l.o.t.$. The difference $v-w$ spans a one-dimensional module, yet the highest order term is not equal to a multiple of the highest weight vector. As a result of Proposition \ref{prop:weight}, this forces $w-v=0$. A similar argument for $L(\lambda)^\ast$ shows the desired conclusion.
\end{proof}
\begin{lem}\label{lem:uqbullet}
	Let $\lambda\in X^+$ be a dominant weight and suppose that $V_\chi$ is a one-dimensional $\uqbs$ submodule in $L(\lambda)$. Then $\chi|_{\uq_\bullet}=\epsilon$ and $\chi(K_h)=q^{\langle h,\lambda\rangle}$ for each $K_h\in \uq_\Theta^0.$
\end{lem}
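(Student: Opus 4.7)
The plan is to combine Proposition \ref{prop:weight} with elementary quantum-group relations. First, I would normalize a generator $v$ of $V_\chi$ as in Proposition \ref{prop:weight}, so that $v = v_\lambda + w + a\,v_{w_0\lambda}$ with $w \in \bigoplus_{w_0\lambda < \mu < \lambda}L(\lambda)_\mu$. The crucial feature is that the $L(\lambda)_\lambda$-component of $v$ is nonzero; consequently, for every $b \in \uqbs$ the scalar $\chi(b)$ can be recovered by comparing the $v_\lambda$-coefficients of the two sides of $bv = \chi(b)v$.

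Applying this principle to $K_h \in \uq_\Theta^0 \subset \uqbs$ is immediate: since $K_h$ acts diagonally on $L(\lambda)$ with respect to the weight decomposition, the $v_\lambda$-component of $K_h v$ is $q^{\langle h,\lambda\rangle}v_\lambda$, whereas that of $\chi(K_h)v$ is $\chi(K_h)v_\lambda$. Comparing yields $\chi(K_h) = q^{\langle h,\lambda\rangle}$ and proves the torus part of the lemma.

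For $\chi|_{\uq_\bullet} = \epsilon$, I would observe that $V_\chi$ is automatically a one-dimensional $\uq_\bullet$-weight module: for $j \in \I_\bullet$ the eigenvalue of $K_j$ on $V_\chi$ is inherited from $L(\lambda)$ and hence lies in $q_j^\Z$. Since $\I_\bullet$ is of finite type, $\uq_\bullet$ is a Drinfeld--Jimbo quantum group of semisimple type, whose unique one-dimensional weight module is the counit, so $\chi|_{\uq_\bullet} = \epsilon$. A self-contained check is equally short: the relation $[E_j,F_j] = (K_j - K_j^{-1})/(q_j - q_j^{-1})$ evaluated on $V_\chi$ forces $\chi(K_j)^2 = 1$, which combined with the torus claim gives $\chi(K_j) = 1$ and $\langle \alpha_j^\vee,\lambda\rangle = 0$; then $K_j E_j K_j^{-1} = q_j^2 E_j$ and $K_j F_j K_j^{-1} = q_j^{-2} F_j$ immediately force $\chi(E_j) = \chi(F_j) = 0$.

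I do not foresee any serious obstacle here; this lemma is an almost immediate consequence of Proposition \ref{prop:weight} together with standard facts about $\uq_\bullet$.
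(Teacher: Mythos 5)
Your proposal is correct and follows essentially the same route as the paper: normalize $v$ via Proposition~\ref{prop:weight} so that the $v_\lambda$-component is nonzero, read off $\chi(K_h)$ by comparing top-weight coefficients, and conclude $\chi|_{\uq_\bullet}=\epsilon$ from the fact that $\uq_\bullet$ has no nontrivial one-dimensional weight modules. The only difference is cosmetic: the paper simply cites the latter fact, while you also include the (correct) direct verification via $[E_j,F_j]$ and the $K_j$-conjugation relations.
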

\begin{proof}
	Let $\lambda\in X^+$ be a dominant weight and suppose that $v\in L(\lambda)$ spans the one-dimensional $\uqbs$ submodule $V_\chi$. By Proposition \ref{prop:weight}, we may assume that $v=v_\lambda+l.o.t.$. Let $K_h\in \uq_\Theta^0$. Then we have
	$$q^{\langle h,\lambda\rangle }v_\lambda+l.o.t=K_h v=\chi(K_h)v,$$
	which implies that $\chi(K_h)=q^{\langle h,\lambda\rangle }$. The only one-dimensional weight module of the quantum group $\uq_\bullet$ is the trivial module. Hence, the action of $\uq_\bullet$ must be equal to the action of $\epsilon$.
\end{proof}
\begin{lem}\label{lem:tech3}
	Let $\lambda\in X^+$ be a dominant weight and suppose that $V_\chi$ and $V_{\chi'}$ are one-dimensional $\uqbs$ submodules in $L(\lambda)$. Then it holds that $V_\chi=V_{\chi'}$ if and only if 
	$$\chi(B_i)=\chi'(B_i)\qquad\text{for all}\qquad i\in \I_\circ.$$
\end{lem}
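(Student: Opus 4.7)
The plan is to reduce the statement to a generation argument for $\uqbs$, combined with the uniqueness result of Lemma \ref{lem:unique}. The forward direction is immediate: if $V_\chi = V_{\chi'}$ as $\uqbs$-submodules, then the two characters realized on this line must coincide on every element of $\uqbs$, in particular on each $B_i$.

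For the nontrivial direction, I would first use Lemma \ref{lem:uqbullet} to show that $\chi$ and $\chi'$ automatically agree on the two generating families other than the $B_i$. Since both $V_\chi$ and $V_{\chi'}$ sit inside the same highest weight module $L(\lambda)$, Lemma \ref{lem:uqbullet} forces
$$\chi|_{\uq_\bullet}=\epsilon=\chi'|_{\uq_\bullet}\qquad\text{and}\qquad \chi(K_h)=q^{\langle h,\lambda\rangle}=\chi'(K_h)\quad\text{for all }K_h\in \uq_\Theta^0.$$
Combining this with the hypothesis $\chi(B_i)=\chi'(B_i)$ for all $i\in \I_\circ$ means that $\chi$ and $\chi'$ agree on the three families of generators $\{E_j,F_j,K_j\}_{j\in\I_\bullet}$, $\{K_h\}_{h:K_h\in \uq_\Theta^0}$ and $\{B_i\}_{i\in\I_\circ}$ that define $\uqbs$ as an algebra in Subsection \ref{subsec:qsp}. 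Since $\chi$ and $\chi'$ are algebra homomorphisms into the commutative algebra $\F$, agreement on a generating set implies agreement as characters, so $\chi=\chi'$ in $\widehat{\uqbs}$.

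Once $\chi=\chi'$, the conclusion $V_\chi=V_{\chi'}$ is not automatic since a priori $L(\lambda)$ could contain several copies of the same one-dimensional character. This is precisely ruled out by Lemma \ref{lem:unique}, which says that $L(\lambda)$ contains at most one one-dimensional $\uqbs$-submodule of any given type. Applying it with the common character $\chi=\chi'$ yields $V_\chi=V_{\chi'}$.

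The only step that requires any real content is the first reduction: showing that two characters of $\uqbs$ that come from lines in $L(\lambda)$ agree on the $\uq_\bullet$ and $\uq_\Theta^0$ parts of the generating set. That step is entirely supplied by Lemma \ref{lem:uqbullet}, and everything else is a direct appeal to the generator set for $\uqbs$ and to the multiplicity-one Lemma \ref{lem:unique}; I do not anticipate a genuine obstacle here.
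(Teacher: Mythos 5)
Your proposal is correct and follows essentially the same argument as the paper: use Lemma \ref{lem:uqbullet} to force agreement of $\chi$ and $\chi'$ on $\uq_\bullet\uq_\Theta^0$, conclude $\chi=\chi'$ by comparing on the generating set, and then invoke the multiplicity-one statement of Lemma \ref{lem:unique} to get $V_\chi=V_{\chi'}$. You spell out the generator-comparison step a bit more explicitly than the paper does, but the structure and ingredients are identical.
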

\begin{proof}
	By Lemma \ref{lem:unique}, the multiplicity of a one-dimensional module is at most one. Using Lemma \ref{lem:uqbullet} we see that if $V_\chi$ and $V_{\chi'}$ are one-dimensional $\uqb$ submodules in $L(\lambda)$ we have $\chi|_{\uq_\bullet \uq_\Theta^0}=\chi'|_{\uq_\bullet \uq_\Theta^0}$. Henceforth, we have
	$$\chi(B_i)=\chi'(B_i)\quad \text{for all}\quad i\in \I_\circ\qquad \text{if and only if }\qquad V_\chi=V_{\chi'}.\qedhere$$
\end{proof}
This result will be one of our main tools for studying symmetries of characters. As a first consequence of Lemma \ref{lem:tech3}, we will show a relation between the bar involutions and characters. We first need a preliminary technical lemma that follows from \cite[\mbox{Prop 5.2}]{Mueller1999}.
\begin{lem}\label{lem:muller}
Let $\uqbs\subset \mathbf{U}_q(\mathfrak{sl}(2))$ be a coideal subalgebra of type $\mathsf{AI}_1$. Then for each $n\in \N$ eigenvalues of $B=F+cEK^{-1}+s K^{-1}\in \End (L(n\omega_1))$ are of the form $E(c,s,q,l)=\frac{(q^l-q^{-l})}{2}\sqrt{s^2+\frac{4qc}{(q-q^{-1})^2}}+\frac{s}{2}(q^{l/2}-q^{-l/2})^2+s$, where $l\in \Z$.
\end{lem}
\begin{proof}
According to \cite[\mbox{Prop 5.2}]{Mueller1999} the eigenvalues of $D=F+cEK^{-1}+s (K^{-1}-1)\in \End (L(n\omega_1))$ are of the form $\frac{s}{2}(q^{l/2}-q^{-l/2})^2+\frac{1}{2}(q^l-q^{-l})R$, with $l\in \Z$ and where $R$ is fixed and satisfies the equation
$$R^2=s^2+\frac{4cq}{(q-q^{-1})^2}.$$
We deduce that the eigenvalues of $B$ are of the form \begin{equation}\label{eq:eigenvalues}
E(c,s,q,l)=\frac{(q^l-q^{-l})}{2}\sqrt{s^2+\frac{4qc}{(q-q^{-1})^2}}+\frac{s}{2}(q^{l/2}-q^{-l/2})^2+s
\end{equation}
with $l\in \Z$.
\end{proof}
\begin{thm}\label{thm:unique}
	Let $\boldsymbol{c}$ be a uniform parameter. Then for each $\chi\in \widehat{\uqb}$ that occurs in a finite dimensional $\uq$ weight module it holds that
	$$\overline{\chi( \overline{b}^{\uqb})}=\chi(b)\qquad \text{for all}\qquad b\in \uqb.$$
\end{thm}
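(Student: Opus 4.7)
The plan is to define the ``conjugate'' character $\chi^\ast\colon\uqb\to\F$ by $\chi^\ast(b):=\overline{\chi(\overline{b}^{\uqb})}$ and to prove that $\chi^\ast=\chi$, which is exactly the statement of the theorem. Since $\chi^\ast$ is the composition of $\chi$ with the anti-linear algebra involution $\overline{\cdot}^{\uqb}$ of Theorem~\ref{thm:barinvolution} and the bar on $\F$, it is itself an $\F$-algebra homomorphism, hence a character of $\uqb$.

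My first step is to check that $\chi$ and $\chi^\ast$ already agree on the subalgebra $\uq_\bullet\uq_\Theta^0$. By Lemma~\ref{lem:uqbullet}, $\chi|_{\uq_\bullet}=\epsilon$ and $\chi(K_h)=q^{\langle h,\lambda\rangle}$ for $K_h\in\uq_\Theta^0$, where $\lambda$ is the highest weight of a module containing $V_\chi$. Since the bar involution on $\uqb$ sends $K_h\mapsto K_{-h}$ and fixes $E_j,F_j$ for $j\in\I_\bullet$, a direct calculation yields $\chi^\ast=\chi$ on $\uq_\bullet\uq_\Theta^0$. Invoking Lemma~\ref{lem:tech3}, the theorem now reduces to showing $\chi^\ast(B_i)=\chi(B_i)$ for every $i\in\I_\circ$, equivalently the bar-invariance of $\chi(B_i)\in\F$.

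To establish this, I will use the $\imath$-bar involution $\phi\colon L(\lambda)\to L(\lambda)$, $v'\mapsto\Upsilon\overline{v'}$, where $\chi$ is realised in $L(\lambda)$. The intertwiner relation $\overline{b}^{\uqb}\Upsilon=\Upsilon\overline{b}$ of Theorem~\ref{thm:intertwiner} combined with the bar-compatibility $\overline{bv'}=\overline{b}\cdot\overline{v'}$ on $L(\lambda)$ gives
$$\phi(bv')=\Upsilon\overline{bv'}=\Upsilon\overline{b}\cdot\overline{v'}=\overline{b}^{\uqb}\Upsilon\overline{v'}=\overline{b}^{\uqb}\phi(v').$$
If $v\in L(\lambda)$ spans $V_\chi$, applying $\phi$ to the eigenvalue relation $bv=\chi(b)v$ and using anti-linearity yields $b\phi(v)=\chi^\ast(b)\phi(v)$; hence $\phi(v)$ spans a one-dimensional $\uqb$-submodule of $L(\lambda)$ of type $\chi^\ast$. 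By Proposition~\ref{prop:weight}, both $v$ and $\phi(v)$ can be scaled so that their leading term is $v_\lambda$, because $\Upsilon v_\lambda=v_\lambda$ (the components $\Upsilon_\mu\in\uq_\mu^+$ for $\mu>0$ annihilate the highest weight vector) and $v_\lambda$ is bar-invariant by choice.

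The principal obstacle is the closing step: passing from ``both $\chi$ and $\chi^\ast$ occur in $L(\lambda)$'' to the identity $\chi=\chi^\ast$. My plan is to argue that the anti-linear involution $\phi$ must preserve the one-dimensional submodule $V_\chi$, so that $V_{\chi^\ast}=\phi(V_\chi)=V_\chi$ and hence $\chi=\chi^\ast$ by Lemma~\ref{lem:unique}. This comes down to a careful analysis of $\phi$ on the (at most two-dimensional) $\uqb$-submodule $V_\chi+V_{\chi^\ast}$: using the matching normalized leading $v_\lambda$-coefficients of $v$ and $\phi(v)$ together with $\phi^2=\mathrm{id}$ (which follows from $\overline{\Upsilon}=\Upsilon^{-1}$), the existence of a common $\phi$-invariant generator forces $V_\chi=V_{\chi^\ast}$, whereupon Lemma~\ref{lem:tech3} yields $\chi(B_i)=\chi^\ast(B_i)=\overline{\chi(B_i)}$, completing the proof.
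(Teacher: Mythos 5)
Your set-up mirrors the paper exactly: pass to the $\imath$-bar involution $\phi\colon v'\mapsto\Upsilon\overline{v'}$, observe that $\phi(v)$ spans a one-dimensional submodule of type $\chi^\ast(b)=\overline{\chi(\overline{b}^{\uqb})}$, check agreement on $\uq_\bullet\uq_\Theta^0$, and invoke Lemma~\ref{lem:tech3} to reduce to the scalar identities $\chi^\ast(B_i)=\chi(B_i)$, $i\in\I_\circ$. Up to this point the two proofs coincide.

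The closing step, however, does not go through. Your claim is that the matching leading coefficients of $v$ and $\phi(v)$, together with $\phi^2=\mathrm{id}$, force $V_{\chi^\ast}=\phi(V_\chi)=V_\chi$. But if $\chi\ne\chi^\ast$, the direct sum $V_\chi\oplus V_{\chi^\ast}$ is a two-dimensional $\uqb$-submodule on which $\phi$ is an anti-linear involution interchanging the two lines, and both generators $v$ and $\phi(v)$ can be normalized to have the same lowest-weight coefficient $1$; no contradiction arises. (Incidentally, your normalization uses the $v_\lambda$-coefficient, but the component $\Upsilon_\mu\overline{v}_{\lambda-\mu}$ with $\mu>0$ can contribute to the $v_\lambda$-line, so the stable normalization is via the lowest weight $v_{w_0\lambda}$, as $\Upsilon$ only raises weights — this is exactly what the paper does in Lemma~\ref{lem:upsion}.) The vectors $v\pm\phi(v)$ are not $\uqb$-eigenvectors when $\chi\ne\chi^\ast$, so Proposition~\ref{prop:weight} imposes no constraint on them, and Lemma~\ref{lem:unique} only applies when the two submodules carry the \emph{same} character. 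The argument as stated is circular: ``$\phi$ preserves $V_\chi$'' is equivalent to the conclusion $\chi=\chi^\ast$, not a route to it.

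What is actually needed — and what the paper supplies — is the scalar identity $\chi(B_i)=\overline{\chi(\overline{B_i}^{\uqb})}$ for each $i\in\I_\circ$, which is a nonformal statement about the spectrum of $B_i$ on finite-dimensional modules. The paper establishes it by cases: for $i\notin\I_{\mathrm{ns}}$, \cite[Prop~10.1]{Kolb2023} gives $\chi(B_i)=0$ on both sides; for $i\in\I_{\mathrm{ns}}$, a rank-one reduction to type $\mathsf{AI_1}$ with balanced uniform parameter $\pm q_i^{-1}$ combined with \cite[Prop~4.2]{Mueller1999} gives $\overline{\chi(B_i)}=\chi(B_i)$. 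This case analysis is the real content of the theorem and cannot be bypassed by the abstract involution argument you propose.
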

\begin{proof}
	Let $\lambda\in X^+$ be a weight such that $V_\chi$ occurs in $L(\lambda)$, and let $v\in L(\lambda)$ be a vector spanning $V_\chi$. Because $\boldsymbol{c}$ is uniform, see Theorem \ref{thm:barinvolution}, this gives us the existence of the $\imath$-bar involution. The intertwining properties of the quasi-$K$ matrix imply that the vector $\Upsilon\overline{v}$ spans a one-dimensional $\uqb$ module. For $b\in\uqb$, we have
	\begin{equation}\label{eq:chibar}
		b \Upsilon\overline{v}=\Upsilon\overline{\overline{b}^{\uqb}}\overline{v}=\overline{\chi(\overline{b}^{\uqb})}\Upsilon\overline{v}.
	\end{equation}
	By Lemma \ref{lem:tech3}, we have
	$$\mathrm{span}_\F\{v\}=\mathrm{span}_\F\{\Upsilon\overline{v}\}\qquad \text{if and only if }\qquad\chi(B_i)=\overline{\chi( \overline{B_i}^{\uqb})}\quad \text{for all}\quad i\in \I_\circ .$$
	If $i\notin \I_{\mathrm{ns}}$ then it follows by \cite[\mbox{Prop 10.1}]{Kolb2023} that
	$\overline{\chi( B_i)}=0=\chi(B_i).$ Because $\overline{B_i}^{\uqb}=B_i$, this shows the desired claim. 
	Hence, it remains to show that $\overline{\chi( \overline{B_i}^{\uqb})}=\chi(B_i)$ in the case that $i\in \I_{\mathrm{ns}}$. Consider the rank one Satake sub-diagram induced by the index $i$. By definition, $\tau(i)=i$ and $c_{ij}=0$ for all $j\in \I_\bullet$. This means that the rank one sub-diagram is of type $\mathsf{AI_1}$. 
	Using Lemma \ref{lem:muller} we see that there exists a $l\in \Z$ such that $\chi( B_i)=[l]_{q_i}\sqrt{q_ic_i}$. Using that the parameter $\boldsymbol{c}$ is uniform, we deduce that
	$\overline{\chi( B_i)}=\overline{[l]_{q_i}\sqrt{c_iq_i}}=[l]_{q_i}\overline{c_i^{1/2}}q_i^{-1/2}\stackrel{{ (\ref{eq:uniform})}}{=}[l]_{q_i}\sqrt{c_iq_i}=\chi(B_i).$ Again, because $\overline{B_i}^{\uqb}=B_i$, this shows the desired claim. 
\end{proof}
We apply Theorem \ref{thm:unique} to show that, up to a scalar multiple, one-dimensional $\uqb$ modules are invariant under the $\imath$-bar involution.
\begin{lem}\label{lem:upsion}
	Let $\boldsymbol{c}$ be an uniform parameter, let $\chi \in \widehat{\uqb}$ a character, and let $\lambda$ a dominant integral weight. If $v\in L(\lambda)$ and $f\in L(\lambda)^\ast$ are $\chi$-spherical vectors, then there exist nonzero $a,a^r\in \F$ such that $\Upsilon \overline{av}=av $ and $\overline{a^rf}\Upsilon=a^rf$.
\end{lem}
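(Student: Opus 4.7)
The plan is to combine uniqueness of $\chi$-spherical vectors (Lemma \ref{lem:unique}) with the intertwining property of the quasi-$K$ matrix (Theorem \ref{thm:intertwiner}) and the character identity from Theorem \ref{thm:unique}.

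First I would verify that $\Upsilon\overline{v}$ again spans a $\chi$-spherical submodule of $L(\lambda)$. For $b\in\uqb$, substituting $x=\overline{b}^{\uqb}$ in the intertwining relation $\overline{x}^{\uqb}\Upsilon=\Upsilon\overline{x}$ gives $b\,\Upsilon=\Upsilon\,\overline{\overline{b}^{\uqb}}$, and combining this with the anti-linearity of the bar involution on $L(\lambda)$ yields
\[
b\cdot\Upsilon\overline{v}\;=\;\Upsilon\,\overline{\overline{b}^{\uqb}v}\;=\;\overline{\chi(\overline{b}^{\uqb})}\,\Upsilon\overline{v}\;=\;\chi(b)\,\Upsilon\overline{v},
\]
where the last equality is precisely Theorem \ref{thm:unique}. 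By Lemma \ref{lem:unique} the $\chi$-spherical line in $L(\lambda)$ is unique, so $\Upsilon\overline{v}=\alpha v$ for some $\alpha\in\F^\times$. Applying the $\imath$-bar involution $v\mapsto\Upsilon\overline{v}$ twice gives
\[
v\;=\;\Upsilon\,\overline{\Upsilon\overline{v}}\;=\;\Upsilon\,\overline{\alpha v}\;=\;\overline{\alpha}\,\Upsilon\overline{v}\;=\;\overline{\alpha}\alpha\,v,
\]
so that $\alpha\overline{\alpha}=1$.

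It remains to rescale. The desired equation $\Upsilon\overline{av}=av$ is equivalent to $\overline{a}\alpha=a$. When $\alpha\neq -1$, the choice $a=1+\alpha$ works: $\overline{a}\alpha=(1+\overline{\alpha})\alpha=\alpha+\overline{\alpha}\alpha=\alpha+1=a$. In the exceptional case $\alpha=-1$ I take $a=q-q^{-1}$, which satisfies $\overline{a}=-a$ and hence $\overline{a}\alpha=a$. Either way a nonzero $a\in\F^\times$ exists. The right-module statement is proved by the same three-step argument applied to $f$ and the $\imath$-bar involution of $L(\lambda)^{\ast}$: I show that its image is again $\chi$-spherical (using the version of the intertwining relation appropriate to the right action, again invoking Theorem \ref{thm:unique}), conclude by Lemma \ref{lem:unique} that it equals a scalar multiple of $f$, verify that this scalar has norm one under the bar involution, and solve for $a^r$ by the same explicit recipe.

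The substantive step is the first one: showing that the $\imath$-bar involution preserves $\chi$-sphericity is precisely where Theorem \ref{thm:unique} is essential, since without the identity $\overline{\chi(\overline{b}^{\uqb})}=\chi(b)$ the image would in general span a module for some other character. The rescaling step is a baby instance of Hilbert 90 that, thanks to the explicit fallback $a=q-q^{-1}$, avoids any Galois-theoretic machinery for the bar involution on~$\F$.
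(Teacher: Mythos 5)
Your proof is correct, and the substantive part (showing that the $\imath$-bar involution $v\mapsto\Upsilon\overline v$ preserves $\chi$-sphericity via the intertwining relation and Theorem~\ref{thm:unique}, then invoking uniqueness from Lemma~\ref{lem:unique} to deduce $\Upsilon\overline v=\alpha v$) is exactly the route the paper takes. Where you diverge is in pinning down the scalar. The paper first invokes Proposition~\ref{prop:weight} to normalize $v=v_{w_0\lambda}+\text{h.o.t.}$; since $\overline{v_{w_0\lambda}}=v_{w_0\lambda}$ and $\Upsilon=1+\sum_{\mu>0}\Upsilon_\mu$ only raises weights, the lowest-weight component of $\Upsilon\overline v$ is again $v_{w_0\lambda}$, so the scalar is literally $1$ (no rescaling needed). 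You instead observe from $\overline\Upsilon=\Upsilon^{-1}$ that the map is an involution, deduce $\alpha\overline\alpha=1$, and solve the resulting norm-one equation by the explicit Hilbert 90 recipe $a=1+\alpha$ with fallback $a=q-q^{-1}$ when $\alpha=-1$. Both are valid. The paper's route is a bit sharper: it shows that for the \emph{natural} normalization of $v$ the scalar is exactly $1$, and this explicit knowledge is quietly used downstream (e.g.\ in the proof of Proposition~\ref{prop:invbar}, where one wants $f$ and $v$ honestly fixed by the $\imath$-bar involutions and in Proposition~\ref{prop:rang1} where the Shapovalov form against $\overline v$ is manipulated). Your version proves the lemma as stated but carries an opaque rescaling constant; if you plan to use this lemma later you may as well take the triangularity of $\Upsilon$ (which is part of Theorem~\ref{thm:intertwiner} anyway) and get $a=1$ for free.
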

\begin{proof}
	By Theorem \ref{prop:weight} there exists a nonzero scalar $a$ such that $av=v_{w_0\lambda}+h.o.t.$.
	Using (\ref{eq:chibar}), the vector $\Upsilon\overline{av}$ spans a one-dimensional $\uqb$ module. The action of $\uqb$ on the module is given by $b\mapsto \overline{\chi(\overline{b}^{\uqb})}$ for $b\in \uqb$. Moreover, Theorem \ref{thm:unique} implies that $\overline{\chi(\overline{b}^{\uqb})}=\chi(b)$ for each $b\in \uqb$. Therefore, Lemma \ref{lem:unique} gives the existence of a scalar $b\in \F$ such that $\Upsilon\overline{av}=bav$. Recall that the quasi-$K$ matrix satisfies $\Upsilon=\sum_{\mu\in X}\Upsilon_\mu$, where $\Upsilon_\mu\in {\uq}_\mu^+$ and $\Upsilon_0=1$, cf. Theorem \ref{thm:intertwiner}.  As $\overline{v_{w_0\lambda}}=v_{w_0\lambda}$, this implies that $\Upsilon\overline{av}=v_{w_0\lambda}+h.o.t.$. So, we may conclude that $b=1$.
	An analogous argument gives the existence of $a^r\in \F$ with $\overline{a^rf}\Upsilon=a^rf$.
\end{proof}
\subsection{Spherical functions of type $\boldmath{\chi}$}\label{sec:sph}
The goal of this section is to introduce the notion of $\chi$-spherical functions and study their structure. 
\begin{rem}\label{rem:asumpar}
	\begin{enumerate}[(i)]
		\item  In this paper we assume that the parameter $(\boldsymbol{c},\boldsymbol{s})$ is chosen such that each module $L(\lambda)$ is completely reducible as $\uqbs$-module.
		\item Let $\uqb$ be a quantum symmetric pair coideal subalgebra where $\boldsymbol{c}$ is uniform. According to \cite[\mbox{Cor 3.2.4}]{Watanabe2023} each finite dimensional $\uq$ weight module is completely reducible as $\uqb$ module. Via the isomorphism $\Phi_{\boldsymbol{a}}$ one obtains complete reducibility for more general parameters. 
	\end{enumerate}
\end{rem}
Both Watanabe \cite[\mbox{Cor 3.2.4}]{Watanabe2023}  and Letzter \cite[\mbox{Thm 3.3}]{Letzter2000} obtain complete reducibility within their respective frameworks. Their methods rely on the existence of a non-degenerate bilinear form, but the forms are contravariant with respect to different automorphisms.
\begin{defi}[Spherical functions]
	Let $\uqbs,\uqds\subset \uq$ be quantum symmetric pair coideal subalgebras and let $\chi$ and $\eta$ be characters of $\uqbs$ and $\uqds$, respectively. We say that $\varphi\in \qfa$ is a\textit{ $(\chi,\eta)$-spherical function} if $\varphi$ transforms as
	$$\varphi(bxb')=\chi(b)\varphi(x)\eta(b')\qquad\text{for all }  b\in \uqbs,b'\in \uqds,\text{ and } x\in \uq.$$
	If $\uqbs=\uqds$, then we refer to $(\chi,\chi)$-spherical functions as $\chi$\textit{-spherical functions.}
	We will refer to $(\epsilon,\epsilon)$-spherical functions as \textit{zonal spherical functions.}
	We write $^{\uqds}_{\chi'}\qfa_{\chi}^{\uqbs}\subset \qfa$ to denote the the $\F$-vector space of $(\chi',\chi)$-spherical functions for $(\uqds,\uqbs)$. If $\uqbs=\uqds$ and $\chi'=\chi$, then we write $\qfachi:={^{\uqbs}_{\chi}\qfa_{\chi}^{\uqbs}}$.
\end{defi}
\begin{prop}\label{prop:prop1}
	Let $\chi \in\widehat{\uqbs}$ be a character, and let $\lambda\in X^+$ be a dominant weight. Then $L(\lambda)$ contains a one-dimensional module of type $\chi$ if and only if $L(\lambda)^\ast$ contains a one-dimensional module of type $\chi$.
\end{prop}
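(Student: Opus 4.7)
The plan is to exploit $\F$-linear vector space duality together with the complete reducibility of $L(\lambda)$ and $L(\lambda)^\ast$ as $\uqbs$-modules guaranteed by Remark \ref{rem:asumpar}. The argument is essentially formal once one tracks the left/right conventions carefully; no appeal to the Shapovalov form, quasi-$K$ matrices, or the case-by-case analysis used elsewhere in Section \ref{sec:char} should be needed.

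Suppose first that $V_\chi \subset L(\lambda)$ is a one-dimensional left $\uqbs$-submodule of type $\chi$, so the inclusion $\iota \colon V_\chi \hookrightarrow L(\lambda)$ is $\uqbs$-equivariant with $b\cdot v = \chi(b)v$ for $b\in\uqbs$ and $v\in V_\chi$. I would then dualize to obtain a surjection $\iota^\ast \colon L(\lambda)^\ast \twoheadrightarrow V_\chi^\ast$ of right $\uqbs$-modules. Because the right action on a dual space is given by $(f\cdot b)(v) = f(bv)$, a one-dimensional left $\uqbs$-module of type $\chi$ dualizes to a one-dimensional right $\uqbs$-module on which $b$ still acts by the same scalar $\chi(b)$, with no antipode twist. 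By Remark \ref{rem:asumpar} the right $\uqbs$-module $L(\lambda)^\ast$ is completely reducible, so $\iota^\ast$ splits and $L(\lambda)^\ast$ contains a one-dimensional submodule of type $\chi$.

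For the converse I would apply the same dualize-and-split procedure to a one-dimensional right $\uqbs$-submodule of $L(\lambda)^\ast$, noting that the canonical evaluation map $L(\lambda) \to L(\lambda)^{\ast\ast}$ is an isomorphism of left $\uq$-modules in finite dimensions, hence of left $\uqbs$-modules. This produces a one-dimensional left $\uqbs$-submodule of $L(\lambda)^{\ast\ast} \cong L(\lambda)$ of type $\chi$ and closes the equivalence. I do not foresee any real obstacle: the only point that requires care is verifying that "type $\chi$" is preserved under dualization on both sides, which reduces to the elementary computation above, and the complete reducibility assumption in Remark \ref{rem:asumpar} does the rest of the work.
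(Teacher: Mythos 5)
Your argument is correct and is in substance the same proof as the paper's: both directions come down to duality plus the complete reducibility of $L(\lambda)$ and $L(\lambda)^\ast$ as $\uqbs$-modules from Remark \ref{rem:asumpar}, together with the observation that the natural right action on the dual, $(f\cdot b)(v)=f(bv)$, preserves the scalar $\chi(b)$ with no antipode twist. The only cosmetic difference is that the paper constructs the dual spherical vector $v^\ast$ explicitly by prescribing its values on $V_\chi$ and a chosen $\uqbs$-invariant complement, whereas you obtain it by dualizing the inclusion $V_\chi\hookrightarrow L(\lambda)$ and splitting the resulting surjection; unwinding your splitting recovers exactly the paper's $v^\ast$.
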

\begin{proof}
	Suppose that $L(\lambda)$ contains a one-dimensional module $V_\chi=\text{span}_\F\{v\}$ of type $\chi$. By complete reducibility of $L(\lambda)$, as $\uqbs$ module, there exists a $\uqbs$ invariant complement $W$. Define $v^\ast\in L(\lambda)^\ast $ by $v^\ast (av)=a$ for $a\in\F$ and $v^\ast(w)=0$ for $w\in W.$ Then we have
	$$(v^\ast b)(v)=v^\ast (bv)=\chi(b)v^\ast(v),\quad\text{and}\quad ( v^\ast b)(w)=v^\ast(bw)=0\quad \text{for}\quad w\in W, b\in \uqb.$$
	Hence, $v^\ast$ is a $\chi$-spherical vector. Conversely, let $v^\ast\in L(\lambda)^\ast$ be a $\chi$-spherical vector. By complete reducibility there exists a $v\in L(\lambda)$ with $v^\ast(v)=1$ that spans an invariant complement of $\ker(v^\ast)$. 
	As
	$$\chi(b)v^\ast (v)=( v^\ast b)(v)=v^\ast (bv)\qquad \text{for each}\qquad b\in\uqbs,$$
	it follows that $v$ is a $\chi$-spherical vector in $L(\lambda)$.
\end{proof}
The vectors $v^\ast\in L(\lambda)^\ast$ and $v\in L(\lambda)$ are refered to as \textit{spherical vectors}, or $\chi$-\textit{spherical vectors}. 
\begin{thm}\label{thm:unique2}
	Let $\uqbs$ and $\uqds$ be quantum symmetric pair coideal subalgebras and let $\chi$ be a character of $\uqbs$ and $\eta$ be a character of $\uqds$. Then for each dominant weight $\lambda\in X^+$ for which, both the multiplicity $\chi$ in $L(\lambda)$ and the multiplicity $\eta$ in $L(\lambda)$ are $1$, there exists a, up to nonzero scalar multiple, unique spherical function of type $(\chi,\eta)$ of the form $\varphi\in L(\lambda)^\ast\otimes L(\lambda)$.
\end{thm}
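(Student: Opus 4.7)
The plan has two parts: existence is obtained by constructing an explicit matrix coefficient, while uniqueness follows from an isotypic decomposition of $L(\lambda)^\ast \otimes L(\lambda)$.

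For existence, the multiplicity-one hypothesis on $\chi$ together with Proposition~\ref{prop:prop1} produces a nonzero $\chi$-spherical vector $f \in L(\lambda)^\ast$ for the right $\uqbs$-action, characterized by $f(bw) = \chi(b) f(w)$ for $b \in \uqbs$ and $w \in L(\lambda)$. The hypothesis on $\eta$ directly yields a nonzero $\eta$-spherical vector $v \in L(\lambda)$ satisfying $b' v = \eta(b') v$ for $b' \in \uqds$. A direct calculation then shows that the matrix coefficient $\varphi := c^{L(\lambda)}_{f,v}$ is $(\chi, \eta)$-spherical:
$$\varphi(bxb') = f(bxb'v) = \eta(b') f(bxv) = \chi(b) \eta(b') f(xv) = \chi(b) \varphi(x) \eta(b').$$

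For uniqueness, the key step is to translate the spherical condition on a matrix coefficient into a representation-theoretic condition on its preimage in $L(\lambda)^\ast \otimes L(\lambda)$, viewed as a right-$\uqbs \otimes $ left-$\uqds$ bi-module. The map $c^{L(\lambda)}$ is injective, since the Peter--Weyl decomposition identifies $L(\lambda)^\ast \otimes L(\lambda)$ with a direct summand of $\qfa$. The identity $c_{fb,\,b'v}(x) = c_{f,v}(bxb')$ shows that a tensor $\tau = \sum_i f_i \otimes v_i$ yields a $(\chi, \eta)$-spherical function precisely when $\sum_i (f_i \cdot b) \otimes (b' \cdot v_i) = \chi(b) \eta(b') \tau$ for all $b \in \uqbs$, $b' \in \uqds$, that is, exactly when $\tau$ lies in the $(\chi, \eta)$-isotypic subspace of the tensor product.

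By complete reducibility (Remark~\ref{rem:asumpar}) together with Lemma~\ref{lem:unique} and the multiplicity-one hypothesis, there are decompositions $L(\lambda)^\ast = V_\chi^\ast \oplus U$ and $L(\lambda) = V_\eta \oplus U'$, where $V_\chi^\ast$ and $V_\eta$ are the unique one-dimensional $\chi$- and $\eta$-isotypic components, and $U$, $U'$ are stable complements containing no further $\chi$- (resp.\ $\eta$-) submodules. Tensoring these decompositions, the $(\chi, \eta)$-isotypic subspace of $L(\lambda)^\ast \otimes L(\lambda)$ equals $V_\chi^\ast \otimes V_\eta$, which is one-dimensional. Hence the $(\chi, \eta)$-spherical function in $L(\lambda)^\ast \otimes L(\lambda)$ is unique up to a nonzero scalar. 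The main conceptual obstacle is the bi-module identification turning the functional spherical condition on $\qfa$ into the tensor-product isotypic condition; once this dictionary is set up, no case analysis is required.
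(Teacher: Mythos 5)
Your proof is correct and follows the same approach as the paper: existence is obtained from Proposition~\ref{prop:prop1} by pairing a $\chi$-spherical covector in $L(\lambda)^\ast$ with an $\eta$-spherical vector in $L(\lambda)$, and uniqueness is reduced to the multiplicity-one statement of Lemma~\ref{lem:unique}. You have simply written out the bi-module isotypic decomposition that the paper's one-line appeal to Lemma~\ref{lem:unique} leaves implicit.
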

\begin{proof}
	Existence follows by Proposition \ref{prop:prop1}.	Uniqueness, up to a scalar multiple, is a result of Lemma \ref{lem:unique}.
\end{proof}
In the following lemma we study realizations of spherical functions and spherical vectors. The identifications made will be a crucial tool for studying the relation between spherical functions for different parameters. 
Recall the isomorphism $\Phi_{\boldsymbol{a}}$ from Section \ref{sec:iota}. In the next lemma, we analyze the modules $^{\Phi_{\boldsymbol{a}}}L(\lambda)$ and $^{\Phi_{\boldsymbol{a}}}L(\lambda)^\ast$. Let $\lambda\in X^+$ be a dominant weight and $\boldsymbol{a}\in (\F^\times )^{\I}$. Set for each weight $\mu=\lambda-\eta$, with $\eta=\sum_{i\in\I}t_i\alpha_i$ and $t_i\in \Z_{\geq0}$,
$$\phi_{\boldsymbol{a}}^{L(\lambda)}(\eta):=\prod_{i\in\I}a_i^{t_i/2}.$$
\begin{lem}\label{lem:tech4} Let $\lambda\in X^+$ be a dominant weight and $\boldsymbol{a}\in (\F^\times)^\I$, with $a_j=1$ for each $j\in \I_\bullet$. Then the following four statements hold.
	\begin{enumerate}[(i)]
		\item The modules $L(\lambda)$ and $^{\Phi_{\boldsymbol{a}}}L(\lambda)$ are isomorphic, via $\Phi_{\boldsymbol{a}}:v_\mu\mapsto \phi^{L(\lambda)}_{\boldsymbol{a}}(\mu)v_\mu$ for weight vectors $v_\mu\in L(\lambda)_\mu$.
		\item The modules $L(\lambda)^\ast$ and $\big(^{\Phi_{\boldsymbol{a}}}L(\lambda)\big)^\ast$ are isomorphic, via $(\Phi^\ast_{\boldsymbol{a}})^{-1}:f_\mu\mapsto (\phi^{L(\lambda)}_{\boldsymbol{a}}(\mu))^{-1}f_\mu$ for weight vectors $f_\mu\in L(\lambda)^\ast_\mu$.
		\item For each parameter $(\boldsymbol{c},\boldsymbol{s})$ it holds that $\Phi_{\boldsymbol{a}}(\uqbs)=\uqa$, where we define $\Phi_{a}(\boldsymbol{c})_i=a_i^{1/2}a_{ \tau(i)}^{1/2}c_i$ and $\Phi_{a}(\boldsymbol{s})_i=a_i^{1/2}s_i,$  for $i\in \I_\circ$.
		\item  If $c^{L(\lambda)}_{f,v}$ is a $(\chi,\eta)$-spherical function for $(\uqbs, \uqbt)$, then $c^{L(\lambda)}_{\Phi^\ast_{\boldsymbol{a}}(f),\Phi_{\boldsymbol{a}}(v)}$ is a $(\chi\circ \Phi_{\boldsymbol{a}}^{-1},\eta\circ \Phi_{\boldsymbol{a}})$-spherical function for $(\uqa,\uqat)$.
	\end{enumerate}
\end{lem}
\begin{proof}
	\begin{enumerate}[(i)]
		\item The modules $L(\lambda)$ and $^{\Phi_{\boldsymbol{a}}}L(\lambda)$ are isomorphic, as both are irreducible highest weight modules of weight $\lambda$. Note than we can write each weight vector $v_\mu\in L(\lambda)_\mu$ as a linear combination of $F_Jv_\lambda$, where $J$ is a finite sequence of roots such that $F_Jv_\lambda\in L(\lambda)_\mu$. Any $\uq$ module isomorphism $L(\lambda)\to\, ^{\Phi_{\boldsymbol{a}}}L(\lambda)$ sends $v_\lambda\in L(\lambda)$ to a scalar multiple of $v_\lambda \in\, ^{\Phi_{\boldsymbol{a}}}L(\lambda)$. We normalize this scalar to equal $1$. In this case the vector $F_Jv_\lambda$ is mapped to 
		$$\Phi_{\boldsymbol{a}}(F_J)v_\lambda=a_{i_1}F_{i_1}\dots a_{i_n}F_{i_n}=\phi^{L(\lambda)}_{\boldsymbol{a}}(\mu)F_Jv_\lambda.$$
		\item Analogous to part $(i)$.
		\item By definition, $\Phi_{\boldsymbol{a}}$ is the identity on $\uq_\bullet\uq_{\Theta}^0$. For $i\in\I_\circ$ we have
		\begin{align*}
			F_i+c_i T''_{w_\bullet,+1}(E_{\tau(i)})K_i^{-1}+s_iK_i^{-1}
			\mapsto a_i^{-1/2}F_i+a_{ \tau(i)}^{1/2}c_i T''_{w_\bullet,+1}(E_{\tau(i)})K_i^{-1}+s_iK_i^{-1}\\
			=a_i^{-1/2}\bigg(F_i+(a_ia_{ \tau(i)})^{1/2}c_i T''_{w_\bullet,+1}(E_{\tau(i)})K_i^{-1}+a_i^{1/2}s_iK_i^{-1}\bigg)\in \uqa.
		\end{align*}
		\item This follows by part $(iii)$.\qedhere
	\end{enumerate}
\end{proof}
\begin{rem}\label{lem:tech}
	For $h\in \frac{1}{2}Y$ the action $\mathrm{ad}(K_h)$ on $\uq$ and the action of $K_h$ on $L(\lambda)$ may be identified with $\Phi_{\boldsymbol{a}}$, with $a_i=q^{\langle 2h,\alpha_i\rangle}$ for $i\in\I_\circ$. Henceforth, Lemma \ref{lem:tech4} also applies to $\mathrm{ad}(K_h)(\uqbs)$.
\end{rem}
In the following lemma we analyze the relationship between the matrix coefficients $c^{L(\lambda)}_{f,v}$ and $c^{L(\lambda)}_{\Phi^\ast_{\boldsymbol{a}}(f),\Phi_{\boldsymbol{a}}(v)}$ under restriction to the subalgebra $\uq^0$.
\begin{lem}\label{lem:transformation}
Let $\boldsymbol{a}\in (\F^\times)^{\I}$ and let  $c^{L(\lambda)}_{f,v}\in L (\lambda)^\ast\otimes L(\lambda)$ be a matrix entry. Then for each $h\in Y$ it holds that 
$c^{L(\lambda)}_{f,v}(K_h)=c^{L(\lambda)}_{\Phi^\ast_{\boldsymbol{a}}(f),\Phi_{\boldsymbol{a}}(v)}(K_h).$
\end{lem}
\begin{proof}
Write $f=\sum_{\mu\in X} f_\mu$ and $v=\sum_{\mu\in X} v_\mu$, where $f_\mu\in L(\lambda)^\ast_\mu$ and $v_\mu\in L(\lambda)_\mu$. Let $h\in Y$. Then it holds that
	$$c^{L(\lambda)}_{f,v}(K_h)=\sum_{\mu\in X} c^{L(\lambda)}_{f_\mu,v_\mu}(K_h),$$
	because $K_h$ acts diagonally on the weight spaces and different weight spaces are orthogonal. Let $h\in Y$. Then it holds that \belowdisplayskip=-12pt
	\begin{align*}
		c^{L(\lambda)}_{\Phi^\ast_{\boldsymbol{a}}(f),\Phi_{\boldsymbol{a}}(v)}(K_h)&=\sum_{\mu \in X} c^{L(\lambda)}_{\phi^{L(\lambda)}_{\boldsymbol{a}}(\mu)^{-1}f_\mu,\phi^{L(\lambda)}_{\boldsymbol{a}}(\mu) v_\mu}(K_h)\\
		&=\sum_{\mu \in X} c^{L(\lambda)}_{ f_\mu,v_\mu}(K_h)\\
		&=c^{L(\lambda)}_{ f,v}(K_h).
	\end{align*}\qedhere
\end{proof}
\subsection{The existence of characters}\label{sec:limit}
In this section we study the existence of one-dimensional $\uqbs$ modules, particularly those that arise as submodules of $\uq$ highest weight modules. Using a specialization argument we reduce this problem to the existence of one-dimensional $\mathbf{U}(\mathfrak{k})$ modules  that arise as submodules of $\mathbf{U}(\mathfrak{g})$ highest weight modules. The latter are characterized in terms of weights in \cite[\mbox{Thm 7.2}]{Schlichtkrull1984}. \\
\\
To form the specialization argument, we recall the specialization of $\uqbs$ and its modules. Let $\A$ be the smallest subring of $\F$ containing $q$ such that every element in $\A$ that is not contained in the ideal generated by $(q - 1)$ has a square root and is invertible, cf. \cite[\mbox{Sec 1}]{Letzter2000}. Let $\wideparen{\mathbf{U}}_q(\mathfrak{g})$ denote the $\A$ subalgebra generated by $E_i,F_i,K_i$ and $\frac{K_i-1}{q-1}$, with $i\in \I$. 
Recall that  $\wideparen{\mathbf{U}}_q(\mathfrak{g})/(q-1)\wideparen{\mathbf{U}}_q(\mathfrak{g})$ is isomorphic to $\mathbf{U}(\mathfrak{g})$, cf. \cite[\mbox{6.11}]{Joseph1994}. For a dominant weight $\lambda\in X^+$ the $\A$ module $L_{\A}(\lambda):=\wideparen{\mathbf{U}}_q(\mathfrak{g})v_\lambda$ is specialized by setting $L(\lambda)^1:=L_{\A}(\lambda)/(q-1)L_{\A}(\lambda)$. The $\mathbf{U}(\mathfrak{g})$ module $L(\lambda)^1$ is the highest weight module of weight $\lambda$.  We write $\cl$ for the quotient maps $\wideparen{\mathbf{U}}_q(\mathfrak{g})\to \mathbf{U}(\mathfrak{g})$, $L^\A(\lambda)\to L(\lambda)^1$ and $\A\to \C$. If $B\subset \uq$ is a subalgebra, we use $\wideparen{B}$ to denote the intersection $B\cap \wideparen{\mathbf{U}}_q(\mathfrak{g})$. A $B$ module $V$ is \textit{specializable} if there exists a basis $\mathcal B=\{v_1,\dots v_n\}\subset V$ such that $\wideparen{B}\,\cdot\mathrm{span}_{\A}\mathcal B\subset \mathrm{span}_{\A}\mathcal B$.
A parameter $(\boldsymbol{c},\boldsymbol{s})$ is  \textit{specializable} if $c_i,s_i\in \A$ and $\cl(\frac{c_i}{s(\tau(i))})=-1$ for each $i\in \I_\circ$, where $s:\I\to \F^\times$ is defined as in \cite[\mbox{Eq (3.2)}]{Balagovic2015}. In this case we have $\mathbf{U}(\mathfrak{g})\supset \wideparen{\uqbs}/(1-q)\wideparen{\uqbs}=\mathbf{U}(\mathfrak{k})$, cf. \cite[\mbox{Thm 4.9}]{Letzter1999} or \cite[\mbox{Thm 10.8}]{Kolb2014}. This allows us to form the $\mathbf{U}(\mathfrak{k})$ module $\widetilde{V}:= \sum_ {v\in\mathcal B}\A v\otimes _{\A}\C$.
In the following lemma we use specialization results from \cite{Letzter2000}. However, in \cite{Letzter2000}, a different convention of the coideal subalgebra is used. To be precise, Letzter uses left coideal subalgebras $\uqbl$ instead of right coideal subalgebras. The algebras $\uqbs$ and $\uqbl$ are isomorphic via the isomorphism $\omega$, that sends the Hopf structure to the opposite Hopf structure, cf. \cite[\mbox{Sec 2}]{Letzter2000}. Moreover, Letzter works with quantum groups over the quotient field of $\A$, which is isomorphic to $\F$.
\begin{thm}\label{lem:onedim} 
	Let $\lambda\in X^+$ be a dominant weight, and let $(\boldsymbol{c},\boldsymbol{s})$ be a specializable parameter. If the $\mathbf{U}_q(\mathfrak{g})$ module $L(\lambda)$ decomposes as $\bigoplus_{i=1}^m L_i$ into simple $\uqbs$ modules, then the $\mathbf{U}(\mathfrak{g})$ module $L(\lambda)^1$ decomposes as $\bigoplus_{i=1}^m \widetilde{L}_i$ into simple $\mathbf{U}(\mathfrak{k})$ modules.
\end{thm}
\begin{proof}
	Let $L(\lambda)=\bigoplus_{i=1}^mL_i$ denote the decomposition as simple $\uqbl$ modules. According to the proof of \cite[\mbox{Prop 7.12}]{Letzter2000}, there exists a specializable basis $\mathcal B(i):=\{u^i_1v_\lambda,\dots, u^i_{n_i}v_\lambda\}$ of $L_i$, where $u^i_j\in \wideparen{\mathbf{U}}_q(\mathfrak{g})$ for each $1\leq i \leq m$ and $1\leq j\leq n_i$. If needed, we may rescale $u^i_j$ such that $\cl (u^i_j)\neq 0$ for each $1\leq i \leq m$ and $1\leq j\leq n_i$. For each $1\leq i \leq m$, consider the $\A$ module $L_i^\A=L_i\cap L^\A(\lambda)$, we note that $L_i^\A$ is a free $\A$ module by \cite[\mbox{Lem 7.3}]{Letzter2000} and $L^\A(\lambda)=\bigoplus _{i=1}^m L^\A_i$. For each $1\leq i\leq m$ the $\A$ module $M_i:=\text{span}_{\A} \mathcal B(i)$ is contained in $L^\A_i$. We claim that the following equality holds:$$\dim _\C(L_i^\A/ (q-1)L_i^\A(\lambda))=\dim_{\C}(M_i/(1-q)M_i)=\dim_{\C}(M_i/(1-q)L_i^\A)=\dim_\F (L_i).$$
	The equality $\dim_\F(L_i)=\dim _\C(L_i^\A/ (q-1)L_i^\A(\lambda))$ follows from \cite[\mbox{Lem 7.3}]{Letzter2000} and an dimension counting argument. The equality $\dim_\F(L_i)=\dim_{\C}(M_i/(1-q)M_i)$ is a consequence of \cite[\mbox{Thm 7.6}]{Letzter2000}. Lastly, the equality $\dim_{\C}(M_i/(1-q)M_i)=\dim_{\C}(M_i/(1-q)L_i^\A)$ holds as $\A$ is a principal ideal domain and $\cl (u^i_j)\neq 0$ for all $1\leq j\leq n_i$.
	Consider the well-defined $\C$-linear map
	$$T_1:M_i/(1-q)M_i\to  M_i/(1-q)L_i^\A,\qquad x+ (1-q)M_i\mapsto x+(1-q)L_i^\A(\lambda),\qquad x\in M_i.$$
	As the dimension of the domain and co-domain are equal and $T_1$ is surjective, $T_1$ is an isomorphism.
	Analogously the well-defined $\C$-linear map
	$$T_2:M_i/(1-q)L_i^\A(\lambda)\to  L_i^\A(\lambda)/(1-q)L_i^\A,\qquad x+ (1-q)M_i\mapsto x+(1-q)L_i^\A(\lambda),$$
	where $x\in M_i$, is an isomorphism because $T_2$ is injective. As $\wideparen{\uqbl}$ preserves the $\A$ modules $(1-q)M_i^\A$ and $(1-q)L_i^\A$, the composition $T_2\circ T_1$ is a $\mathbf{U}(\mathfrak{k})$ module isomorphism. This shows that the $\mathbf{U}(\mathfrak{g})$ module $L(\lambda)^1$ has a decomposition 
	$$L(\lambda)^1\cong\bigoplus_{i=1}^m L_i^\A/(1-q)L_i^\A,$$
	where by \cite[\mbox{Prop 7.6}]{Letzter2000} each constituent $L_i^\A/(1-q)L_i^\A\cong M_i/(q-1)M_i\cong \widetilde{M_i}$ is a simple $\mathbf{U}(\mathfrak{k})$ module. This gives the result for $\wideparen{\uqbl}$.
%
	The desired conclusion for $\uqbs$ is obtained by twisting $L(\lambda)$ with the automorphism $\omega$.
\end{proof}
Let $V_\chi\subset L(\lambda)$ be a one-dimensional $\uqbs$ module of type $\chi$ with specializable basis $\{v\}$. Note that the $\mathbf{U}(\mathfrak{k})$ module $\widetilde{V_\chi}= \A v\otimes_{\A}\C$ is one-dimensional, we write $\cl(\chi)$ for the corresponding character. 
\begin{lem}\label{lem:clchar}
Let $\lambda\in X^+$ be a dominant weight and let $V_\chi\subset L(\lambda)$ be a one-dimensional $\uqbs$ module. Then, for each $i\in \I_{\mathrm{ns}}$ there exists an integer $l_i\in \Z$ such that	\begin{align}\label{eq:form1}
	&	\chi(B_i)=E(c_i,s_i,q_i,l_i)\quad \text{if }\in \I_{\mathrm{ns}}&& 
	\chi(B_i)=0\quad \text{if }i\notin \I_{\mathrm{ns}}\\
	&\chi|_{\uq_\bullet}=\epsilon&& 
		\chi(K_h)=q^{\langle h,\lambda\rangle} \quad \text{if }\theta(h)=h \nonumber.		
	\end{align}
	Here, $E(c_i,s_i,q_i,l_i)\in \F$ is the element defined in $(\ref{eq:eigenvalues})$.
	Furthermore, if $(\boldsymbol{c},\boldsymbol{s})$ is a specializable parameter, then $\cl(\chi)$ is the unique character of $\mathbf{U}(\mathfrak{k})$ that satisfies
	\begin{align}\label{eq:findimuq}
				&\cl(\chi)(\cl(B_i)))= l_i\sqrt{-1}+\cl(s_i)\quad \text{if }\in \I_{\mathrm{ns}}&&\cl(\chi)(\cl(B_i))=0\quad \text{if }i\notin \I_{\mathrm{ns}}\\
		&\cl (\chi)_{\mathbf{U}(\mathfrak{g})_\bullet}=\epsilon&& \cl(\chi)(h)=\langle h,\lambda\rangle \quad \text{if }\theta(h)=h\nonumber.
	\end{align}
\end{lem}
\begin{proof}
Using \cite[\mbox{Prop 10.1}]{Kolb2023}, Lemma \ref{lem:uqbullet} and Lemma \ref{lem:muller} the character $\chi$ takes the form (\ref{eq:form1}). The action of $\mathbf{U}(\mathfrak{k})$ on $\widetilde{V_\chi}$ follows directly.
\end{proof}
\begin{rem}\label{rem:char}
Analogously to the proof of Lemma \ref{lem:clchar}, any character of $\mathbf{U}(\mathfrak{k})$ that occurs in a finite dimensional $\mathbf{U}(\mathfrak{g})$ weight module takes the form (\ref{eq:findimuq}). 
\end{rem}
\begin{rem}
Let $(\boldsymbol{c},\boldsymbol{s})$ be a specializable parameter and let $\chi$ be a character of $\uqbs$ occurring in a simple module $L(\lambda)$. As a result of the presentation of Lemma \ref{lem:clchar} it is automatic that $\chi: \wideparen{\uqbs}\to \A$.
\end{rem}
\begin{thm}\label{thm:q=1}
	Let $(\boldsymbol{c},\boldsymbol{s})$ be a specializable parameter and let $\chi\in\widehat{\uqbs}$ be a character. Then the following two statements are equivalent:
	\begin{enumerate}[(i)]
		\item  The $\uq$ module $L(\lambda)$ contains a module of type $\chi$ for $\uqbs$.
		\item The $\mathbf{U}(\mathfrak{g})$ module $L(\lambda)^1$ contains a module of type $\cl(\chi)$ for $\mathbf{U}(\mathfrak{k})$.
	\end{enumerate}
\end{thm}
\begin{proof}
	The implication $(i)\implies(ii)$ follows from Lemma \ref{lem:clchar} and Theorem \ref{lem:onedim}.
	Conversely, suppose that the $\mathbf{U}(\mathfrak{k})$ module $L(\lambda)^1$ contains a one-dimensional module $\widetilde{L}$ of type $\cl(\chi)$. According to Remark \ref{rem:char}  and Theorem \ref{lem:onedim} there exists a one-dimensional $\uqbs$ module $V_{\chi'}\subset L(\lambda)$ that specializes to $\widetilde{L}$. 
	To determine the character $\chi'$ we use Lemma \ref{lem:clchar}. Accordingly, we remain to determine the integer $k_i$ that satisfies $\chi(B_i)=E(c_i,s_i,q_i,k_i)$ for $i\in \I_{\mathrm{ns}}$. Recall that the parameter $(\boldsymbol{c},\boldsymbol{s})$ is specializable, which in particular means that $\cl(c_i)=-1$ as $s(i)=1$, cf. \cite[\mbox{Eq (3.2)}]{Kolb2014}. Hence, the equality $$ l_i\sqrt{-1}=\cl (\chi')(\cl(B_i))-\cl (s_i)=\cl(E(c_i,s_i,q_i,k_i))-\cl (s_i)=k_i\sqrt{-1}$$
	shows that $l_i=k_i$.
\end{proof}
To extend Theorem \ref{thm:q=1} to more general parameters we use Lemma \ref{lem:tech4}.
\begin{cor}\label{cor:hermetian}
	Let $(\boldsymbol{c},\boldsymbol{s})$ be a specializable parameter, let $\chi$ be a character of $\uqbs$ and let $\boldsymbol{a}\in ({\F^\times})^{\I}$ with $a_i=1$ for each $i\in \I_\bullet$. Then the following two statements are equivalent
	\begin{enumerate}[(i)]
		\item  The $\uq$ module $L(\lambda)$ contains a module of type $\chi\circ \Phi_{\boldsymbol{a}}^{-1}$ for $\uqa$.
		\item The $\mathbf{U}(\mathfrak{g})$ module $L(\lambda)^1$ contains a module of type $\cl(\chi)$ for $\mathbf{U}(\mathfrak{k})$.
	\end{enumerate}
\end{cor}
Using Corollary \ref{cor:hermetian} we relate the existence of non-trivial characters to $(\mathfrak{g},\mathfrak{k})$ being Hermitian. A symmetric pair $(\mathfrak{g},\mathfrak{k})$ is called \textit{irreducible} if $\mathfrak{g}$ cannot be written as the
direct sum of two semisimple Lie subalgebras which both admit $\Theta$ as an involution. An irreducible symmetric pair $(\mathfrak{g},\mathfrak{k})$ is said to be of \textit{Hermitian type} if $\mathfrak{k}$ has a non-trivial center, or according to \cite[\mbox{Ch 5}]{Heckman1995} this is equivalent to the existence of non-trivial one-dimensional $\mathbf{U}(\mathfrak{k})$ modules in finite dimensional $\mathbf{U}(\mathfrak{g})$ weight modules.
\begin{prop}\label{cor:herm2}
 	Let $(\boldsymbol{c},\boldsymbol{s})$ be a specializable parameter, let $\boldsymbol{a}\in ({\F^\times})^{\I}$ with $a_i=1$ for each $i\in \I_\bullet$ and let the symmetric pair $(\mathfrak{g},\mathfrak{k})$ be irreducible. Then the following three statements are equivalent:
 \begin{enumerate}[(i)]
 	\item There exist countably infinite many pairwise non-isomorphic one-dimensional $\uqa$ modules that occur in finite dimensional $\uq$ weight modules.
 	\item There exist non-trivial one-dimensional $\uqa$ modules that occur in finite dimensional $\uq$ weight modules.
 	\item The symmetric pair $(\mathfrak{g},\mathfrak{k})$ is of Hermitian type.
 \end{enumerate}
\end{prop}
\begin{proof}
The implication $(i)\implies (ii)$ is clear.
Suppose that there exist a non-trivial one-dimensional $\uqa$ module $V_\chi \subset M$, where $M$ is a finite dimensional $\uq$ weight module. 
According to Corollary \ref{cor:hermetian} there exists a non-trivial one-dimensional $\mathbf{U}(\mathfrak{k})$ module in $\widetilde{M}$. 
So, $(\mathfrak{g},\mathfrak{k})$ is of Hermitian type. Lastly, suppose that $(\mathfrak{g},\mathfrak{k})$ is of Hermitian type. Then there are countably infinite many pairwise non-isomorphic non-trivial one-dimensional $\mathbf{U}(\mathfrak{k})$ modules that occur in finite dimensional $\mathbf{U}(\mathfrak{g})$ weight modules, cf. \cite[\mbox{Ch 5}]{Heckman1995}. By Theorem \ref{thm:q=1} these modules are in one-to-one correspondence with the non-trivial one-dimensional $\uqbs$ modules that occur in $\uq$ weight modules. Therefore, by Corollary \ref{cor:hermetian} there are countably infinite many pairwise non-isomorphic one-dimensional $\uqa$ modules that occur in finite dimensional $\uq$ weight modules.
\end{proof}
\begin{rem}\label{rem:herm}
Proposition \ref{cor:herm2} allows for an extension to non-specializable standard parameters, using results of \cite{Watanabe2024} and \cite{Watanabe2021}. The only standard cases where Proposition \ref{cor:herm2} does not apply in full generality occur when the Satake diagram contains a rank-one subdiagram of type $\mathsf{AIV}$. These diagrams correspond to the irreducible Hermitian types $\mathsf{AIV}, \mathsf{AIII}, \mathsf{DIII}, \mathsf{EIII}$. Let $j \in \I_\circ$ be an index such that the local type of this rank-one subdiagram is $\mathsf{AIV}$. Using Proposition \ref{cor:herm2}, together with the abstract isomorphism of quantum symmetric pair coideal subalgebras established in \cite[Lem 2.5.1]{Watanabe2021}, we conclude that for each $l \in \Z$, the map $\chi_l: \uqbs \to \F$ defined by
\begin{equation}\label{eq:characterwet}
\chi|_{\uq_\bullet}=\epsilon,\quad B_k\mapsto 0, \quad K_iK_{\tau(i)}^{-1}\mapsto 1,\quad K_jK_{\tau(j)}^{-1}\mapsto q^l,\quad k\in \I_\circ, j\neq i\in\I_\circ 
\end{equation}
is a character of $\uqbs$. The fact that the character $\chi_l$ is well defined may also be observed by the presentation of $\uqbs$ in terms of generators and relations, cf. \cite[Thm 7.1]{Kolb2014}. We note that the generators $B_k$, $E_j$ and $F_j$ for $k\in \I_\circ $ and $j\in \I_\bullet$ act locally nilpotent on $V_\chi$ and the generators of $\uq^0_{\Theta}$ act by a power of $q$. By \cite[\mbox{Prop 4.3.1}]{Watanabe2024} this implies that the $\uqbs$ module $V_\chi$ is \textit{integrable}, cf. \cite[Def. 3.3.4]{Watanabe2024}. Because $V_\chi$ is integrable, \cite[Def. 3.3.4]{Watanabe2024} implies that there exist dominant weights $\lambda,\mu\in X^+$ and a surjective $\uqbs$ module homomorphism $V^\imath(\lambda,\mu)\to V_\chi$. Here, $V^\imath(\lambda,\mu)$ is equal to to the $\uq$ module in $L(\mu)\otimes L(\lambda)$ generated by $ v_{w_\bullet\lambda}\otimes v_\mu$, where $v_{w_\bullet \lambda}$ is a distinguished weight vector in $L(\lambda)$ of weight $w_\bullet \lambda$. In particular, if $V^\imath(\lambda,\mu)$ is completely reducible as a $\uqbs$ module, $V_\chi$ embeds into the finite dimensional $\uq$ weight module $L(\lambda) \otimes L(\mu)$. Since every character appearing in a $\uq$ weight module is of the form (\ref{eq:characterwet}), this generalizes Proposition \ref{cor:herm2} to parameters for which finite dimensional $\uq$ weight modules are completely reducible as $\uqbs$ modules.
\end{rem}
\subsection{Invariance under the Wang-Zhang braid group operators}
In this section we show that each character of $\uqb$ that occurs in a $\uq$ weight module is invariant under the Wang-Zhang braid group operators.
We first prove this for the distinguished parameter, see (\ref{eq:distinguished}).
\begin{prop}\label{prop:charinv}
	Let $\boldsymbol{c}^\diamondsuit$ be the distinguished parameter. Then for each character $\chi\in \widehat{\uqdis}$ that occurs in a finite dimensional $\uq$ weight module and each $i\in \I_\circ $ it holds that
	$\chi\circ \boldsymbol{T}'_{i,-1}=\chi.$
\end{prop}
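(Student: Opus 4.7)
The plan is to reduce the claim to a check on the generators $B_j$ of $\uqdis$, and then to verify that check by a rank-one analysis organized according to the type of the Satake subdiagram at $i$.

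First, I would fix a dominant weight $\lambda\in X^+$ such that $V_\chi\subset L(\lambda)$ and choose $0\neq v\in V_\chi$. Applying $\boldsymbol{T}'_{i,-1}$ to $v$ and using the intertwining relation (\ref{eq:braidrel}) together with the fact that $\boldsymbol{T}'_{i,-1}$ preserves $\uqdis$ by Theorem \ref{thm:braid}(i), one sees that $\boldsymbol{T}'_{i,-1}(v)$ again spans a one-dimensional $\uqdis$-submodule of $L(\lambda)$, this time with character $\chi\circ(\boldsymbol{T}'_{i,-1})^{-1}$. By Lemma \ref{lem:uqbullet}, both $\chi$ and $\chi\circ(\boldsymbol{T}'_{i,-1})^{-1}$ are determined on $\uq_\bullet\uq_{\Theta}^0$ by the weight $\lambda$ and hence coincide there. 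Invoking Lemma \ref{lem:tech3} inside $L(\lambda)$, the identity $\chi\circ\boldsymbol{T}'_{i,-1}=\chi$ is therefore equivalent to the finite collection of scalar equalities $\chi(\boldsymbol{T}'_{i,-1}(B_j))=\chi(B_j)$ for each $j\in \I_\circ$.

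Next, I would expand $\boldsymbol{T}'_{i,-1}(B_j)$ using the factorization of Theorem \ref{thm:braid}(ii), writing it as $\Upsilon_{i,\boldsymbol{c}^\diamondsuit}\,\mathcal T'_{\boldsymbol{r_i},-1;\boldsymbol{c}^\diamondsuit}(B_j)\,\Upsilon_{i,\boldsymbol{c}^\diamondsuit}^{-1}$ in $\comp$, so that the problem splits into computing the rescaled Lusztig image $\mathcal T'_{\boldsymbol{r_i},-1;\boldsymbol{c}^\diamondsuit}(B_j)$ and then controlling the conjugation by the rank-one quasi-$K$ matrix. For indices $j$ sufficiently far from the rank-one Satake subdiagram $\{i,\tau(i)\}\cup\I_\bullet$, both factors simplify drastically and the required equality follows quickly; the effort is concentrated on the finitely many $j$ whose interaction with $i$ is nontrivial.

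The main obstacle is that no uniform formula for $\mathcal T'_{\boldsymbol{r_i},-1;\boldsymbol{c}^\diamondsuit}(B_j)$ is available: the answer depends on the type of the rank-one Satake subdiagram at $i$ (for example $\mathsf{AI}_1$, rank-one $\mathsf{AII}$, $\mathsf{AIII}$, $\mathsf{AIV}$, etc.) and on the position of $j$ relative to it. Consequently I expect the verification to proceed by running through the finitely many admissible rank-one Satake subdiagrams, in each case evaluating $\mathcal T'_{\boldsymbol{r_i},-1;\boldsymbol{c}^\diamondsuit}(B_j)$ at the distinguished values (\ref{eq:distinguished}), using the explicit description of $\Upsilon_{i,\boldsymbol{c}^\diamondsuit}$ from the rank-one theory together with Lemma \ref{lem:upsion} to control the conjugation, and finally matching the outcome with $\chi(B_j)$. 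This is precisely the case-by-case book-keeping that the paper delegates to Appendix \ref{Appendix}.
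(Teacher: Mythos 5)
Your opening reduction is correct and matches the paper: from the intertwining relation and Theorem~\ref{thm:braid}(i) you obtain a new one-dimensional $\uqdis$-submodule of $L(\lambda)$, and Lemma~\ref{lem:tech3} (together with Lemma~\ref{lem:uqbullet}) reduces $\chi\circ\boldsymbol{T}'_{i,-1}=\chi$ to the scalar identities $\chi(\boldsymbol{T}'_{i,-1}(B_j))=\chi(B_j)$, $j\in\I_\circ$. (Note that the paper works with $(\boldsymbol{T}'_{i,-1})^{-1}v$ and the character $\chi\circ\boldsymbol{T}'_{i,-1}$, whereas you work with $\boldsymbol{T}'_{i,-1}(v)$ and $\chi\circ(\boldsymbol{T}'_{i,-1})^{-1}$; the two bookkeepings are equivalent, but your later equation $\chi(\boldsymbol{T}'_{i,-1}(B_j))=\chi(B_j)$ is the one naturally produced by the paper's choice, not yours.)

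The gap is in how you organize the remaining verification. You propose to discard those $j$ ``sufficiently far'' from the rank-one subdiagram $\{i,\tau(i)\}\cup\I_\bullet$ and then run a case analysis over rank-one Satake types by expanding $\boldsymbol{T}'_{i,-1}(B_j)=\Upsilon_i\,\mathcal T'_{\boldsymbol{r_i},-1}(B_j)\,\Upsilon_i^{-1}$. This misses the decisive reduction the paper uses: by \cite[Prop.~10.1(i)]{Kolb2023}, for \emph{every} $j\notin\I_{\mathrm{ns}}$ one has $B_jv=0$, and the same applies to the transformed spherical vector, so $\chi(B_j)=0=\chi\circ\boldsymbol{T}'_{i,-1}(B_j)$ automatically, regardless of whether $j$ is near $i$. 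Only $j\in\I_{\mathrm{ns}}$ remain, and after disposing of $j=i$ separately (Lemma~\ref{lem:aprang1}), the residual cases are governed by the rank-\emph{two} Satake subdiagrams containing both $i$ and $j$ with $j\in\I_{\mathrm{ns}}$ (types $\mathsf{AI}_2$, $\mathsf{BI}_n$, $\mathsf{DI}_n$, $\mathsf{DIII}_4$, $\mathsf{AIII}_3$) --- not the rank-one subdiagrams at $i$, as you assert. Without the $\I_{\mathrm{ns}}$ reduction, your case count is much larger and less structured, and the ``sufficiently far'' heuristic does not actually bound it. Finally, the computational route you suggest (compute $\mathcal T'_{\boldsymbol{r_i},-1}(B_j)$ and conjugate by $\Upsilon_i$) would force you to handle conjugation by a quasi-$K$ matrix in the completion $\comp$; the paper sidesteps this entirely by invoking the explicit closed-form formulas for $\boldsymbol{T}'_{i,-1}(B_j)$ (resp.\ its Drinfel'd-double lift) from \cite[Table~3]{Wang2023} and then evaluating $\chi$ term by term, which is both shorter and avoids any convergence issues.
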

\begin{proof}
	Let $\lambda\in X^+$ be a weight such that  $V_\chi$ occurs in $L(\lambda)$, and suppose that $v\in L(\lambda)$ spans $V_\chi$. For each $i\in \I_\circ$ and $b\in \uqdis$ it holds that
	\begin{align*}
		b\big(\boldsymbol{T}'_{i,-1}\big)^{-1}v&=\big(\boldsymbol{T}'_{i,-1}\big)^{-1}(\boldsymbol{T}'_{i,-1}(b))\big(\boldsymbol{T}'_{i,-1}\big)^{-1}v\\
		&=\big(\boldsymbol{T}'_{i,-1}\big)^{-1}(\boldsymbol{T}'_{i,-1}(b)v)\\
		&=\big(\boldsymbol{T}'_{i,-1}\big)^{-1}(\chi\circ\boldsymbol{T}'_{i,-1}(b)v)\\
		&=\chi\circ\boldsymbol{T}'_{i,-1}(b)\big(\boldsymbol{T}'_{i,-1}\big)^{-1}(v).
	\end{align*}
	Therefore, the vector $\big(\boldsymbol{T}'_{i,-1}\big)^{-1}v$ spans a one-dimensional $\uqdis$ module where the action of $\uqdis$ is given by $\chi\circ \boldsymbol{T}'_{i,-1}$. By Lemma \ref{lem:tech3}, to show that $\chi\circ \boldsymbol{T}'_{i,-1}=\chi$, it suffices to show that
	$\chi\circ \boldsymbol{T}'_{i,-1}=\chi$
	on $\F[B_j\,:\, j\in \I_\circ]$. If $j\notin \I_{\mathrm{ns}}$, then $B_j \big(\boldsymbol{T}'_{i,-1}\big)^{-1}v=0=B_jv$, by Proposition \cite[\mbox{Prop 10.1. (i)}]{Kolb2023}. Hence it remains to show that the action of $B_j$ with $j\in \I_{\mathrm{ns}}$ coincides. This is carried out in Appendix \ref{Appendix} by a case-by-case analysis.
\end{proof}
In the following corollary, we show that the assumption on the parameter can be relaxed to balanced parameters.
\begin{cor}\label{cor:invariance}
	Let $\boldsymbol{d}$ be a balanced parameter. Then for each character $\chi\in \widehat{\uqd}$ that occurs in a finite dimensional $\uq$ weight module and each $w\in \Br(W^\Sigma) $ it holds that $\chi\circ \boldsymbol{T}'_{w,-1}=\chi.$
\end{cor}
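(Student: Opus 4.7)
My plan is to reduce the corollary to Proposition \ref{prop:charinv} via two independent steps: first pass from an arbitrary element $w \in \Br(W^\Sigma)$ to the generators $\boldsymbol{r_i}$, and second pass from a general balanced parameter $\boldsymbol{d}$ to the distinguished parameter $\boldsymbol{c}^\diamondsuit$ via the rescaling isomorphism $\Phi_{\boldsymbol{a}}$ of equation \eqref{eq:iso}.

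For the first reduction, Theorem \ref{thm:braid}(i) provides that $\boldsymbol{T}'_{-,-1;\boldsymbol{d}} \colon \Br(W^\Sigma) \to \mathrm{Aut}_{\mathrm{alg}}(\uqd)$ is a group homomorphism. Thus if $\chi \circ \boldsymbol{T}'_{\boldsymbol{r_i},-1;\boldsymbol{d}} = \chi$ holds for every $i \in \I_\circ$, then post-composing with $(\boldsymbol{T}'_{\boldsymbol{r_i},-1;\boldsymbol{d}})^{-1}$ gives the same identity for $\boldsymbol{r_i}^{-1}$, and writing $w$ as a word in the generators $\boldsymbol{r_i}^{\pm 1}$ yields $\chi \circ \boldsymbol{T}'_{w,-1;\boldsymbol{d}} = \chi$ by induction on length.

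For the second reduction, I would pick the scalar tuple $\boldsymbol{a} \in (\F^\times)^{\I}$ with $a_i := d_i/c^\diamondsuit_i$ for $i \in \I_\circ$ and $a_i := 1$ for $i \in \I_\bullet$. Since both $\boldsymbol{d}$ and $\boldsymbol{c}^\diamondsuit$ are balanced we have $a_i = a_{\tau(i)}$, so $\Phi_{\boldsymbol{a}}$ restricts to an algebra isomorphism $\Phi_{\boldsymbol{a}} \colon \uqdis \to \uqd$. Because $\Phi_{\boldsymbol{a}}$ fixes $\uq^0$, twisting preserves the weight decomposition of every finite-dimensional $\uq$ weight module, so the character $\chi \circ \Phi_{\boldsymbol{a}} \in \widehat{\uqdis}$ also occurs in a finite-dimensional $\uq$ weight module. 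Proposition \ref{prop:charinv} then gives $(\chi \circ \Phi_{\boldsymbol{a}}) \circ \boldsymbol{T}'_{\boldsymbol{r_i},-1;\boldsymbol{c}^\diamondsuit} = \chi \circ \Phi_{\boldsymbol{a}}$.

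The key technical step is to match this with an identity for the $\boldsymbol{d}$-braid operator by establishing the intertwining
$$\boldsymbol{T}'_{\boldsymbol{r_i},-1;\boldsymbol{d}} \circ \Phi_{\boldsymbol{a}} = \Phi_{\boldsymbol{a}} \circ \boldsymbol{T}'_{\boldsymbol{r_i},-1;\boldsymbol{c}^\diamondsuit}.$$
Using $\overline{c^\diamondsuit_i} = (c^\diamondsuit_i)^{-1}$, one checks that $\overline{\boldsymbol{c}^\diamondsuit}\boldsymbol{d}$ agrees with $\boldsymbol{a}$ on $\I_\circ$, so the definition of the rescaled Lusztig operator yields $\mathcal T'_{\boldsymbol{r_i},-1;\boldsymbol{d}} = \Phi_{\boldsymbol{a}} \circ \mathcal T'_{\boldsymbol{r_i},-1;\boldsymbol{c}^\diamondsuit} \circ \Phi_{\boldsymbol{a}}^{-1}$. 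Combined with the factorization $\boldsymbol{T}'_{\boldsymbol{r_i},-1;\boldsymbol{c}}(-) = \Upsilon_{i,\boldsymbol{c}}\, \mathcal T'_{\boldsymbol{r_i},-1;\boldsymbol{c}}(-)\, \Upsilon_{i,\boldsymbol{c}}^{-1}$ from Theorem \ref{thm:braid}(ii), the intertwining reduces to the identity $\Phi_{\boldsymbol{a}}(\Upsilon_{i,\boldsymbol{c}^\diamondsuit}) = \Upsilon_{i,\boldsymbol{d}}$, which I expect to read off the uniqueness characterization of the rank-one quasi-$K$ matrix (Theorem \ref{thm:intertwiner}) applied to the two isomorphic rank-one coideal subalgebras. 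The main obstacle will be this last identity: one has to trace through the Wang--Zhang normalization of $\Upsilon_{i,\boldsymbol{c}}$ and verify that pushing it forward along $\Phi_{\boldsymbol{a}}$ yields precisely the intertwiner for the rescaled parameter, with no stray scalar. Once in place, cancelling $\Phi_{\boldsymbol{a}}$ from the transported identity of Proposition \ref{prop:charinv} gives $\chi \circ \boldsymbol{T}'_{\boldsymbol{r_i},-1;\boldsymbol{d}} = \chi$, closing the generator case and hence the corollary.
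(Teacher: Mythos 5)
Your proposal is correct and follows the same overall strategy as the paper: reduce to generators via the homomorphism property of $\boldsymbol{T}'_{-,-1}$ from Theorem \ref{thm:braid}, and transport Proposition \ref{prop:charinv} along the rescaling isomorphism $\Phi_{\boldsymbol a}$. The one place you take the long road is the intertwining identity $\Phi_{\boldsymbol a}\circ\boldsymbol{T}'_{i,-1;\boldsymbol{c}^\diamondsuit}\circ\Phi_{\boldsymbol a}^{-1}=\boldsymbol{T}'_{i,-1;\boldsymbol{d}}$, which you propose to rederive by pushing forward both the rescaled Lusztig operator and the rank-one quasi-$K$ matrix; the paper simply cites this identity directly from \cite[Thm 9.10]{Wang2023}, so the obstacle you flag is already dealt with in the literature and need not be reproved.
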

\begin{proof}
	Let $\boldsymbol{c}^\diamondsuit$ be the distinguished parameter and define for each $i\in\I_\circ$ the scalar $a_i:=c_i^\diamondsuit d_i^{-1}$ and $a_i=1$ for $i\in\I_\bullet$. Recall $\Phi_{\boldsymbol{a}}$ from Section \ref{sec:iota}. This $\uq$ Hopf-algebra automorphism restricts to an algebra isomorphism $\uqd\to \uqdis$. Suppose that $\chi$ occurs in the $\uq$ module $L(\lambda)$. Then the character $\chi\circ \Phi_{\boldsymbol{a}}$ occurs in the twisted $\uq$ module $^{\Phi_{\boldsymbol{a}}}L(\lambda)\cong L(\lambda)$. Because $\Phi_{\boldsymbol{a}}\circ(\boldsymbol{T}'_{i,-1;\boldsymbol{c}^\diamondsuit})\circ\Phi_{\boldsymbol{a}}^{-1}=\boldsymbol{T}'_{i,-1;\boldsymbol{d}}$, see \cite[\mbox{Thm 9.10}]{Wang2023}, and by applying Proposition \ref{prop:charinv} to $\chi\circ\Phi_{\boldsymbol{a}}$ we have
	\begin{align*}
		\chi\circ \boldsymbol{T}'_{i,-1;\boldsymbol{d}}(b)&=\chi(\Phi_{\boldsymbol{a}}(\boldsymbol{T}'_{i,-1;\boldsymbol{c}}(\Phi_{\boldsymbol{a}}^{-1}(b)))\\
		&\stackrel{{Prop  \ref{prop:charinv}}}{=}\chi(\Phi_{\boldsymbol{a}}((\Phi_{\boldsymbol{a}}^{-1}(b)))\\
		&=\chi(b)\quad \text{for each}\quad b\in \uqd.
	\end{align*}
	As the reflections $\boldsymbol{r}_i$ for $i\in\I_\circ $ generate $\Br(W^\Sigma)$, we have $\chi\circ \boldsymbol{T}'_{w,-1}=\chi$ for each $w\in \Br(W^\Sigma)$.
\end{proof}
\subsection{Zonal spherical functions}
In this section we study zonal spherical functions, digressing from the overarching goal of studying spherical functions for general characters. We give a new proof of \cite[\mbox{Thm 5.3}]{Letzter2003} in rank one using elementary techniques. We start by showing an elementary property of zonal spherical functions. 
\begin{prop}\label{prop:sphgen} Let $\varphi$ be a zonal spherical function for $(\uqbs,\uqbs)$. Then $\varphi\circ S$ is a zonal spherical function for $(S^{-2}(\uqbs),\uqbs)$.  
\end{prop}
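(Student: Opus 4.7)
The plan is a direct calculation. Since $S$ is an algebra anti-homomorphism, for every $b_1\in S^{-2}(\uqbs)$, $b_2\in\uqbs$, and $x\in\uq$ I would start with
\[
(\varphi\circ S)(b_1 x b_2)=\varphi(S(b_1 x b_2))=\varphi(S(b_2)\, S(x)\, S(b_1)),
\]
reducing the proposition to showing that this equals $\varphi(S(x))=(\varphi\circ S)(x)$. Writing $b_1=S^{-2}(\tilde b_1)$ with $\tilde b_1\in\uqbs$ gives $S(b_1)=S^{-1}(\tilde b_1)$, so the task is to verify
\[
\varphi\bigl(S(b_2)\, y\, S^{-1}(\tilde b_1)\bigr)=\varphi(y)\qquad\text{for all } y=S(x)\in\uq,\; b_2,\tilde b_1\in\uqbs.
\]

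The strategy for this last identity is to invoke the zonal hypothesis $\varphi(pyq)=\varphi(y)$ for $p,q\in\uqbs$. Conceptually, the asymmetric pair $(S^{-2}(\uqbs),\uqbs)$ appearing in the conclusion is exactly what is needed to compensate for the fact that $S$ flips left and right when moved past $b_1 x b_2$: the $S^{-2}$ on the left "precomposes" with two applications of $S$, while the right slot, already paired with a single $S$, only needs no adjustment. So I expect the proof to be essentially a change-of-variables argument repackaged as a calculation.

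To close the remaining step rigorously, I would use the right coideal property $\Delta(\uqbs)\subseteq\uqbs\otimes\uq$ together with the antipode axiom $m\circ(S\otimes\mathrm{id})\circ\Delta=\iota\circ\epsilon$ (and its opposite form) in Sweedler notation to re-express $S(b_2)\,y$ and $y\,S^{-1}(\tilde b_1)$ in a form where the zonal invariance is immediately visible. Alternatively, since $\varphi$ is a matrix coefficient $c_{f,v}$ with $f$ and $v$ both $\uqbs$-spherical, one can pass to the module side: $\varphi(S(b_2)\,y\,S^{-1}(\tilde b_1))=f\bigl(S(b_2)\,y\,S^{-1}(\tilde b_1)\cdot v\bigr)$, and then use the $\uqbs$-invariance of $f$ and $v$ to absorb the two antipode-twisted factors.

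The main obstacle is the verification of the final identity; it is \emph{not} a purely formal consequence of $S$ being an anti-homomorphism, because $\uqbs$ is only a right coideal subalgebra and is in general not stable under $S$. The proof must therefore leverage the specific structure of zonal spherical functions (either through the coideal/Sweedler computation or through the matrix-coefficient reformulation) rather than a naive substitution argument.
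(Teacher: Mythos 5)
Your core strategy matches the paper's: apply $S$, use the right coideal property $\Delta(\uqbs)\subseteq\uqbs\otimes\uq$ together with the antipode axioms in Sweedler notation, and then invoke the zonal property for $(\uqbs,\uqbs)$ to absorb the antipode-twisted factors. That is exactly how the published proof proceeds, and you correctly pinpoint the central difficulty — that $\uqbs$ is not $S$-stable, so a naive substitution fails.

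However, there are two points to flag. First, you repeatedly write the zonal condition as $\varphi(pyq)=\varphi(y)$ and reduce to "verify $\varphi(S(b_2)\,y\,S^{-1}(\tilde b_1))=\varphi(y)$"; this is not what a zonal ($(\epsilon,\epsilon)$-) spherical function satisfies. The correct condition is $\varphi(pyq)=\epsilon(p)\,\varphi(y)\,\epsilon(q)$, and the target identity should be $\varphi(S(b_2)\,y\,S^{-1}(\tilde b_1))=\epsilon(b_2)\,\varphi(y)\,\epsilon(\tilde b_1)$. As stated your reduced goal is false in general — for instance $\epsilon(B_i)=0$ for standard coideals — though when you actually run the Sweedler computation the counit factors reappear on their own, so this reads more as a notational slip than a broken idea. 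Second, your "module-side" alternative does not work as described: since $S(b_2)$ and $S^{-1}(\tilde b_1)$ do not lie in $\uqbs$, the $\uqbs$-invariance of $f$ and $v$ cannot be applied directly to them; one would have to run the same coideal/Sweedler argument before the module picture gives anything, so it is not a genuine shortcut. Finally, the paper leans on the identity $S^{-2}(x)=K_{2\rho}\,x\,K_{-2\rho}$, which serves two purposes you should not skip: it identifies $S^{-2}(\uqbs)=\mathrm{ad}(K_{2\rho})(\uqbs)$ as a bona fide quantum symmetric pair coideal subalgebra (so the statement of the proposition makes sense), and it converts $S^{-1}(b'_{(2)})$ into $K_{2\rho}S(b'_{(2)})K_{-2\rho}$, letting the $S$'s recombine cleanly via the counit axiom.
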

\begin{proof}
	 Recall that for each $x\in\uq$ we have $S^{-2}(x)=K_{2\rho}xK_{-2\rho}$, hence the algebra $S^{-2}(\uqbs)=\mathrm{ad}(K_{2\rho})(\uqbs)$ is a quantum symmetric pair coideal subalgebra. Also
	\begin{align*}
		\epsilon(x)=\epsilon(S(x))= S^{-1}(x_{(2)})x_{(1)}=x_{(1)}S(x_{(2)}).
	\end{align*} 
	Applying $S$ yields $S(x)K_{-2\rho}=K_{-2\rho}S^{-1}(x).$ 
	For all $b,b'\in \uqb$ and $x\in \uq$ it holds that
	{\allowdisplaybreaks
		\begin{align*}
			&\epsilon(S^{-2}(b'))\varphi(S(x))\epsilon(b)\\
			&=\epsilon(b)\varphi(S(x))\epsilon(S^{-1}(b'))\\
			&=\varphi\bigg( b_{(1)}S(b_{(2)})S(x) K_{2\rho}S(b'_{(2)})K_{-2\rho}b'_{(1)}\bigg)\\
			&=\varphi\bigg( S(\epsilon(b_{(1)})b_{(2)})S(x) K_{2\rho}S(\epsilon(b'_{(1)})b'_{(2)})K_{-2\rho}\bigg)\\
			&=\varphi\bigg( S(b)S(x) K_{2\rho}S(b')K_{-2\rho}\bigg)\\
			&=\varphi\bigg( S(b)S(x)S^{-1}(b')\bigg)\\
			&=\varphi\bigg( S(S^{-2}(b')xb)\bigg).
	\end{align*}}
	This shows that $\varphi\circ S$ is a zonal spherical function for $(S^{-2}(\uqbs),\uqbs)$. \qedhere
\end{proof}
Recall the notion of Weyl group invariance and affine Weyl group invariance for matrix entries, as introduced in Section \ref{subsec:invo}. Furthermore, recall the relative Weyl group from Theorem \ref{thm:dobsonkolb}. In rank one the relative Weyl group equals $W^\Sigma=\{\boldsymbol{r_1},1\}$. The action of $W^\Sigma$ on the quantum torus is given by $\boldsymbol{r_1}K_h=K_{-h}$, for $h\in Y_\Theta$. 
We combine Proposition \ref{prop:sphgen} and Lemma \ref{lem:tech4} to give a new proof of \cite[\mbox{Thm 5.3}]{Letzter2003} in rank one. In Theorem \ref{thm:weylgrpinv1}, we will show Weyl group invariance in general rank through a rank one reduction. Recall the quotient map $\cl:\A\to \C$ from Section \ref{sec:limit}.
\begin{thm}\label{thm:weylinvrank1}Let $(\I,\I_\bullet,\tau)$ be a Satake diagram of rank one. Then the following three statements hold:
	\begin{enumerate}[(i)]
		\item Each zonal spherical function $\varphi$ for $(\uqbs,\uqbs)$ satisfies $\varphi\circ S=\varphi\triangleleft K_{2\rho}$.
		\item Each zonal spherical function for $(\uqbs,\uqbs)$ is invariant under the affine Weyl group action.
		\item Each zonal spherical function for $(\mathrm{ad}(K_\rho)(\uqbs),\uqbs)$ is Weyl group invariant.
	\end{enumerate}
\end{thm}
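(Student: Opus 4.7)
The overall strategy is to prove (i) first and deduce (ii), (iii) from it, combined with the general bimodule manipulations already built into the paper. For (i), I would begin with Proposition \ref{prop:sphgen}, which identifies $\varphi\circ S$ as a zonal spherical function for $(S^{-2}(\uqbs),\uqbs)=(\mathrm{ad}(K_{2\rho})(\uqbs),\uqbs)$. A parallel direct Hopf-algebra calculation — using the identity $K_{2\rho}b = S^{-2}(b)K_{2\rho}$, dual to the identity $S^{-2}(b')=K_{2\rho}b'K_{-2\rho}$ exploited in the proof of Proposition \ref{prop:sphgen} — will identify $\varphi\triangleleft K_{2\rho}$ as a zonal spherical function for the same pair. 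On each Peter–Weyl isotypic summand $L(\lambda)^*\otimes L(\lambda)$ admitting a zonal spherical pair $(f,v)$, Theorem \ref{thm:unique2} guarantees uniqueness of such a function up to scalar, so both sides can only differ by a scalar per $\lambda$. That the scalar is $1$ is then a direct matching: evaluating both $\varphi\circ S$ and $\varphi\triangleleft K_{2\rho}$ on $K_h$ for $h\in Y$ and using the explicit form of spherical vectors from Proposition \ref{prop:weight} (namely $v=v_{w_0\lambda}+\text{h.o.t.}$ and $f=v_\lambda^*+\text{l.o.t.}$), one compares the leading-weight contributions. I expect the main technical step in (i) to be this scalar-matching, which in rank one reduces to a short case-by-case inspection of the handful of rank-one Satake diagrams.

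For (ii), the affine Weyl invariance follows by restricting the identity of (i) to the quantum torus. At $X=K_h$ with $h\in Y$, one reads off $\varphi(K_{-h})=\varphi(K_{h+2\rho})$. In rank one the relative Weyl group is $W^\Sigma=\{1,\boldsymbol{r_1}\}$ and $\boldsymbol{r_1}=w_0^\Sigma$ acts as $-\mathrm{id}$ on the one-dimensional restricted Cartan $Y_\Theta$ (using Theorem \ref{thm:dobsonkolb} together with the fact that in a rank-one Satake diagram $\Theta(\rho)=-\rho$, so $\rho\in Y_\Theta$). Substituting $h=\rho+g$ with $g\in Y_\Theta$ rewrites the identity as $\varphi(K_{\rho-g})=\varphi(K_{\rho+g})$, which is by definition $\varphi\triangleleft K_\rho\in\qfaaws$.

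For (iii), let $\tilde\varphi$ be zonal spherical for $(\mathrm{ad}(K_\rho)(\uqbs),\uqbs)$. The transformation rule for $\triangleleft K_h$ on the left coideal shows that $\psi:=\tilde\varphi\triangleleft K_\rho$ is zonal spherical for $(\uqbs,\uqbs)$. Applying (i) to $\psi$ and unwrapping $\psi\circ S(K_h)=\psi\triangleleft K_{2\rho}(K_h)$ as an identity in $\tilde\varphi$ evaluated on the quantum torus, I obtain a relation of the form $\tilde\varphi(K_{\rho-h})=\tilde\varphi(K_{3\rho+h})$ (up to sign conventions consistent with those of (i)–(ii)). Using once more that $\rho,\,2\rho\in Y_\Theta$ and $w(\rho)=-\rho$ in rank one, a change of variables $g=\rho-h$ (for $g\in Y_\Theta$) reduces this — possibly combined with the affine-Weyl symmetry already proved in (ii) applied to $\psi$, to absorb the residual $2\rho$-translation — precisely to $\tilde\varphi(K_g)=\tilde\varphi(K_{-g})$ for $g\in Y_\Theta$, which is the $W^\Sigma$-invariance on $\mathcal A$.

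The main obstacle is part (i): establishing the equality of $\varphi\circ S$ and $\varphi\triangleleft K_{2\rho}$ with the correct normalization on every Peter–Weyl summand. The rank-one hypothesis is essential, both because it forces $W^\Sigma$ to act as $\pm 1$ on $Y_\Theta$ (so that the restriction of (i) to $\mathcal A$ matches the desired Weyl symmetry verbatim) and because it gives a finite, explicit list of Satake diagrams over which the normalization can be checked case-by-case using the rank-one structure of spherical vectors.
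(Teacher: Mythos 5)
Your overall scaffolding matches the paper's: establish the identity $\varphi\circ S=\varphi\triangleleft K_{2\rho}$ in (i) using Proposition~\ref{prop:sphgen}, Remark~\ref{lem:tech} and the multiplicity-one statement Theorem~\ref{thm:unique2} to reduce to a scalar $\eta$ per Peter--Weyl block, and then deduce (ii) and (iii) by restricting to the quantum torus and shifting by $K_{\pm\rho}$. Parts (ii) and (iii) of your argument are essentially the paper's, modulo a sign slip (substituting $h=\rho+g$ in $\varphi(K_{-h})=\varphi(K_{h+2\rho})$ gives $\varphi(K_{-\rho-g})=\varphi(K_{3\rho+g})$, not the displayed identity; you want $h=-\rho-g$, equivalently the substitution $h\mapsto 2\rho-h$). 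Also, your parenthetical claim that $\Theta(\rho)=-\rho$ in rank one is false once $\I_\bullet\neq\emptyset$ (e.g.\ type $\mathsf{AII_3}$), though this claim is not actually needed: what you use is only that $\boldsymbol{r_1}$ acts by $-1$ on $Y_\Theta$.

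The genuine gap is your normalization argument in (i). You propose to fix $\eta$ by comparing leading-weight contributions of $\varphi\circ S$ and $\varphi\triangleleft K_{2\rho}$ on $K_h$, invoking Proposition~\ref{prop:weight}, and say this ``reduces to a short case-by-case inspection.'' But Proposition~\ref{prop:weight} only says $v=v_\lambda+w+av_{w_0\lambda}$ and $f=v_\lambda^\ast+w'+a^rv_{w_0\lambda}^\ast$ for \emph{some} nonzero $a,a^r$; it gives no handle on those scalars. Matching leading terms produces $\eta=aa^rq^{-\langle 2\rho,\lambda\rangle}$, so you would have to compute $a$ and $a^r$ explicitly for each rank-one Satake type (and each $\lambda$), which is a substantial computation that your proposal does not supply. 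The paper avoids this entirely with a uniform argument: starting from a sequence $J$ with $E_Jv_{w_0\lambda}$ a nonzero multiple of $v_\lambda$, it forms the weight-zero element $X=K_\rho\,\sigma(F_J)E_J\sigma(E_J)F_J$, which acts as a nonzero multiple of the projection onto the highest-weight line. Using the explicit formulas for $S(E_J)$ and $S(\varrho(E_J))$ together with $\varrho$-contravariance of the Shapovalov form, one checks that $X=S\bigl(\sigma(F_J)E_J\sigma(E_J)F_JK_{-\rho}\bigr)$ and that $X$ commutes with $K_\rho$; evaluating $\varphi$ on this identity and using $\varphi\circ S=\eta\,\varphi\triangleleft K_{2\rho}$ yields $c_{f,v}(X)=\eta\,c_{f,v}(X)$ with $c_{f,v}(X)\neq0$, hence $\eta=1$ without any case analysis. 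You should replace your leading-weight heuristic by this (or an equivalent) uniform antipode argument, or else actually carry out the case-by-case computation of $a,a^r$; as written the proposal does not close the gap.
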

\begin{proof}
	Let $\varphi=c_{f,v}\in L(\lambda)^\ast\otimes L(\lambda)$ be a zonal spherical function for $(\uqbs,\uqbs$). Remark that for Satake diagrams of rank one, the spherical weights are $-w_0$ invariant. Therefore, by Proposition \ref{prop:sphgen}, $\varphi\circ S\in L(\lambda)^\ast\otimes L(\lambda)$ is a zonal spherical function for $(S^{-2}(\uqbs),\uqbs)$, which is equal to the pair $(\text{ad}(K_{2\rho})(\uqbs),\uqbs)$. Furthermore, Remark \ref{lem:tech} implies that $c_{f ,v}\triangleleft K_{2\rho}$ is a zonal spherical function for $(\text{ad}(K_{2\rho})(\uqbs),\uqbs)$. 
	By Theorem \ref{thm:unique2} and Lemma \ref{lem:unique}, there exists a nonzero scalar $\eta\in\F$ such that \begin{equation}\label{eq:symetry}
	c_{f,v}\circ S=\eta c_{f ,v}\triangleleft K_{2\rho}.
	\end{equation}
Let $\mathcal B=\{v_{\mu_i}\,:\,1\leq i\leq m\}$ be a basis of $L(\lambda)$ consisting of weight vectors, and let $\mathcal B^\ast=\{v^\ast_{\mu_i}\,:\,1\leq i\leq m\}$ denote the dual basis of $L(\lambda)^\ast$. Without loss of generality we may assume that the vectors $f$ and $v$ are scaled such that
$v\in \text{span}_{\A}\mathcal B$ and $f\in \text{span}_{\A}\mathcal B^\ast $. Furthermore, by the proof of Proposition  \ref{prop:prop1} we can assume that $\cl (c_{f,v}(1))\neq 0$. Because $\cl (c_{f,v}(K_\rho))=\cl (c_{f,v}(1))\neq 0$, it follows that $c_{f,v}(K_\rho)\neq 0$. Therefore (\ref{eq:symetry}) implies that $\eta=1$.
This shows statement $(i).$ The relative Weyl group equals
	$W^\Sigma=\{1,\boldsymbol{r_1}\}$. The invariance is clear for the element $1\in W^\Sigma$. It remains to show the statement for $\boldsymbol{r_1}$. Let $h\in Y_{\Theta}$. Then we have $\boldsymbol{r_1}h=-h$. Applying the equality $c_{f,v}\circ S= c_{f ,v}\triangleleft K_{2\rho}$ yields
	$$
	\varphi(K_{h+\rho})=c_{f,v}(K_{h+\rho})
	=c_{f ,v}\triangleleft K_{2\rho}(K_{h-\rho})
	=c_{f,v}(K_{-h+\rho})
	=c_{f,v}(K_{\boldsymbol{r_1}h+\rho})
	=\varphi(K_{\boldsymbol{r_1}h+\rho}).$$
	This shows statement $(ii)$. Let $c_{g,w}\in L(\lambda)^\ast\otimes L(\lambda)$ be a zonal spherical function for the pair $(\mathrm{ad}(K_\rho)(\uqbs),\uqbs)$. By Remark \ref{lem:tech}, $c_{g ,w}\triangleleft K_{-\rho}$ is a zonal spherical function for $(\uqbs,\uqbs)$. Let $h\in Y_{\Theta}$. Using part $(ii)$, we have 
		$$
	c_{g,w}(K_h)=c_{g ,w}\triangleleft K_{-\rho}(K_{h+\rho})
	=c_{g ,w}\triangleleft K_{-\rho}(K_{\boldsymbol{r_1}h+\rho})
	=c_{g,w}(K_{\boldsymbol{r_1}h}).\qedhere
	$$
\end{proof}
\section{Bar involutions and relative Weyl group invariance}\label{sec:barqfa}
The goal of this section is to establish two kinds of symmetries for $\chi$-spherical functions. The first symmetry is in regard to $(\uqb,\uqb)$ spherical functions of type $\chi$. We show invariance under the braid group operators of Wang and Zhang \cite{Wang2023}. However, when restricted to the quantum torus the $(\uqb,\uqb)$ $\chi$-spherical functions do not lie in $\qfaaws$. The second symmetry establishes, for each quantum symmetric pair coideal subalgebra $\uqb$ a choice of coideal subalgebra $\uqds$, such that the $(\uqds,\uqb)$ spherical functions when restricted to the quantum torus lie in $\qfaaws$. One should compare these symmetries to the classical situation, where the $\chi$-spherical functions are invariant under the relative Weyl group action, cf. \cite[\mbox{Prop 5.1.2.}]{Heckman1995}.
\subsection{Symmetries for $(\mathbf{\uqb},\mathbf{\uqb})$ spherical functions}
The goal of this section is to show that $\chi$-spherical functions for $(\uqb,\uqb)$ are invariant under the Wang-Zhang braid group operators. We apply the invariance to study the behavior of $\chi$-spherical functions on the subalgebra $\uq^0=\F[K_h\,:\, h\in Y]$.
\begin{rem}\label{rem:nonzero}
	If $c_{f,v}\in L(\lambda)^\ast\otimes L(\lambda)$ is a nonzero $\chi$-spherical function for $(\uqb,\uqb)$, then by Proposition \ref{prop:prop1} it holds that $c_{f,v}(1)\neq 0$.
\end{rem}
\begin{prop}\label{prop:WZ}
	Let $\boldsymbol{c}$ be a balanced parameter, let $\chi\in \widehat{\uqb}$ be a character, and let  $\varphi\in{\qfachi}$ be a spherical function. Then for each $\varphi \in \qfachi$ and each $w\in \Br(W^\Sigma)$ it holds that $\varphi\circ \boldsymbol{T}_{w,-1}'=\varphi$.
\end{prop}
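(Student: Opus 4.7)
The plan is to reduce the problem to a uniqueness-of-spherical-vectors statement. By the quantum Peter--Weyl decomposition and the fact that precomposition by $\boldsymbol{T}'_{i,-1}$ preserves each summand $L(\lambda)^\ast \otimes L(\lambda)$ (since the Wang--Zhang operator acts as a $\F$-linear automorphism of each finite-dimensional weight module), it suffices to prove the claim for a single matrix coefficient $\varphi = c^{L(\lambda)}_{f,v}$, where $f \in L(\lambda)^\ast$ and $v \in L(\lambda)$ are $\chi$-spherical vectors (existence follows from the hypothesis $\varphi \in \qfachi$ and Proposition \ref{prop:prop1}). Moreover, since $\Br(W^\Sigma)$ is generated by the $\boldsymbol{r_i}$ with $i \in \I_\circ$, it is enough to treat each individual $\boldsymbol{T}'_{i,-1}$.

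First I would rewrite the precomposition operator: unfolding the definition and invoking the intertwining identity of Theorem \ref{thm:braid}(ii), one finds
\[
c_{f,v}\circ \boldsymbol{T}'_{i,-1}(X) = (f\Upsilon_{i,\boldsymbol{c}})\bigl(\mathcal{T}'_{\boldsymbol{r_i},-1;\boldsymbol{c}}(X)\cdot \Upsilon_{i,\boldsymbol{c}}^{-1}v\bigr) = f\bigl(\boldsymbol{T}'_{i,-1}(X)\cdot v\bigr),
\]
so the goal reduces to showing $f(\boldsymbol{T}'_{i,-1}(X)\cdot v) = f(X\cdot v)$ for every $X \in \uq$.

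The heart of the argument is a scalar computation exploiting uniqueness. Because $\boldsymbol{T}'_{i,-1}$ restricts to an algebra automorphism of $\uqb$ (Theorem \ref{thm:braid}(i)) and satisfies $\chi \circ \boldsymbol{T}'_{i,-1} = \chi$ (Corollary \ref{cor:invariance}), the module compatibility $\boldsymbol{T}'_{i,-1}(bm) = \boldsymbol{T}'_{i,-1}(b)\boldsymbol{T}'_{i,-1}(m)$ immediately implies that $\boldsymbol{T}'_{i,-1}(v) \in L(\lambda)$ and $f \circ \boldsymbol{T}'_{i,-1} \in L(\lambda)^\ast$ are again $\chi$-spherical. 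By Lemma \ref{lem:unique}, there exist nonzero scalars $\alpha,\delta \in \F$ with $\boldsymbol{T}'_{i,-1}(v)=\alpha v$ and $f \circ \boldsymbol{T}'_{i,-1} = \delta f$. Substituting into the above display via the identity $\boldsymbol{T}'_{i,-1}(Xv)=\boldsymbol{T}'_{i,-1}(X)\boldsymbol{T}'_{i,-1}(v) = \alpha \boldsymbol{T}'_{i,-1}(X)v$, we obtain
\[
f\bigl(\boldsymbol{T}'_{i,-1}(X)\cdot v\bigr) = \frac{1}{\alpha}f\bigl(\boldsymbol{T}'_{i,-1}(Xv)\bigr) = \frac{\delta}{\alpha}\,f(Xv) = \frac{\delta}{\alpha}\,c_{f,v}(X).
\]
Evaluating at $X=1$ and using Remark \ref{rem:nonzero} ($c_{f,v}(1) = f(v) \neq 0$) forces $\delta/\alpha = 1$, so $c_{f,v}\circ \boldsymbol{T}'_{i,-1} = c_{f,v}$, and the invariance for general $w \in \Br(W^\Sigma)$ follows from the braid relations.

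The main obstacle is essentially conceptual rather than technical: recognizing that the sphericality condition, together with the character invariance of Corollary \ref{cor:invariance}, propagates through $\boldsymbol{T}'_{i,-1}$ to produce new spherical vectors, at which point the uniqueness statement of Lemma \ref{lem:unique} does all the work. The only delicate point is keeping straight the dual roles of $\boldsymbol{T}'_{i,-1}$ as an automorphism of $\comp$ and as a module operator, which is cleanly handled by the factorization in Theorem \ref{thm:braid}(ii).
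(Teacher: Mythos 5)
Your proof is correct and follows essentially the same route as the paper's: reduce to a single Peter--Weyl summand and a generator $\boldsymbol{T}'_{i,-1}$, use the character invariance (Corollary~\ref{cor:invariance}/Proposition~\ref{prop:charinv}) together with the uniqueness of one-dimensional $\uqb$-submodules (Lemma~\ref{lem:unique}) to obtain the scaling relations, and evaluate at $1\in\uq$ via Remark~\ref{rem:nonzero} to conclude the scalar is $1$. The only cosmetic difference is that you package the precomposition as $f(\boldsymbol{T}'_{i,-1}(X)v)$ rather than as the matrix coefficient $c_{f\boldsymbol{T}'_{i,-1},(\boldsymbol{T}'_{i,-1})^{-1}v}$, which is an equivalent rewriting.
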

\begin{proof}
	It is enough to show that $\varphi\circ \boldsymbol{T}'_{i,-1}=\varphi$ for $\chi$-spherical functions $\varphi=c_{f,v}\in L(\lambda)^\ast\otimes L(\lambda)$ and $i\in \I_\circ$.
	We first show that $\varphi\circ \boldsymbol{T}'_{i,-1}\in L(\lambda)^\ast\otimes L(\lambda)$. 
	We have
	\begin{align*}
		c_{f,v}\circ \boldsymbol{T}'_{i,-1}(X)&=c_{  f\boldsymbol{T}'_{i,-1},(\boldsymbol{T}'_{i,-1})^{-1}v}(X),
	\end{align*}
	and $c_{ f\boldsymbol{T}'_{i,-1},(\boldsymbol{T}'_{i,-1})^{-1}(v)}\in L(\lambda)^\ast\otimes L(\lambda)$. The vector $  f\boldsymbol{T}'_{i,-1}$ spans a one-dimensional $\uqb$ module of type $\chi\circ(\boldsymbol{T}'_{i,-1})^{-1}$ and $(\boldsymbol{T}'_{i,-1})^{-1}v$ spans a $1$-dimensional $\uqb$ module of type $\chi \circ \boldsymbol{T}'_{i,-1}$. By Lemma \ref{lem:unique}, and Corollary \ref{cor:invariance}, this gives $ f(\boldsymbol{T}'_{i,-1})^{-1}=\eta_1f$ and  $ \boldsymbol{T}'_{i,-1}(v)=\eta_2v$ for nonzero scalars $\eta_1,\eta_2\in \F$. By Remark \ref{rem:nonzero} $\varphi(1)\neq0$, hence valuating at $1\in \uq$ gives us that 
	$$\varphi(1)=\varphi\circ \boldsymbol{T}'_{i,-1}(1)=c_{ f\boldsymbol{T}'_{i,-1},(\boldsymbol{T}'_{i,-1})^{-1}(v)}(1)=\eta_1\eta_2c_{f,v}(1),$$
	which implies that $\eta_1\eta_2=1$. As a consequence, it holds that $\varphi\circ \boldsymbol{T}'_{i,-1}=\varphi$.
\end{proof}
This gives us a certain braid group invariance on $\uq^0$. Since $\mathcal T'_{\boldsymbol{r_i},-1}(K_h)=K_{\boldsymbol{r_i}h}$, Proposition \ref{prop:WZ} and Theorem \ref{thm:braid} together imply that
\begin{equation}
	c_{f,v}(K_h)=c_{f\Upsilon_i,\Upsilon_i^{-1}v}(K_{\boldsymbol{r_i}h})\qquad \text{for all} \qquad h\in Y, c_{f,v}\in \qfachi,i\in \I_\circ.
\end{equation}
The longest element of the Weyl group $w_0\in W$ gives rise to a diagram automorphism $\tau_0$ that acts as on the simple roots by
\begin{equation}\label{eq:longestel}
	w_0\alpha_i=-\alpha_{\tau_0(i)}\qquad\text{for all}\qquad  i\in \I.
\end{equation}
The following proposition relates this diagram automorphism to the bar involution.
\begin{prop}\label{prop:invbar}
	Let $\boldsymbol{c}$ be a balanced uniform parameter, let $\chi\in\widehat{\uqb}$ a character,
	and let $c_{f,v}\in L(\lambda)^\ast\otimes L(\lambda)$ be a $\chi$-spherical function for $(\uqb,\uqb)$. Then there exists a nonzero scalar $a\in \F$ such that $\overline{ac_{f,v}(K_h)}=ac_{f,v}(K_{\tau_0(h)})$ for each $h\in Y$.
\end{prop}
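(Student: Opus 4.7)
First, I apply Lemma~\ref{lem:upsion} to produce nonzero scalars $a', a^r \in \F$ so that the rescaled vectors $\tilde v := a' v$ and $\tilde f := a^r f$ are $\imath$-bar invariant, that is, $\Upsilon \overline{\tilde v} = \tilde v$ and $\overline{\tilde f}\,\Upsilon = \tilde f$, equivalently $\overline{\tilde v} = \Upsilon^{-1} \tilde v$ and $\overline{\tilde f} = \tilde f\, \Upsilon^{-1}$. Setting $a := a' a^r$, we have $a \cdot c_{f,v} = c_{\tilde f, \tilde v}$, so the statement reduces to proving $\overline{c_{\tilde f, \tilde v}(K_h)} = c_{\tilde f, \tilde v}(K_{\tau_0 h})$ for all $h \in Y$.

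Next, I invoke the commutative diagram (\ref{prop:kashi}) to rewrite the left hand side. Since $\psi(\varphi)(X) = \overline{\varphi(\overline X)}$, $\overline{K_{-h}} = K_h$, and $\psi(c_{\tilde f, \tilde v}) = c_{\overline{\tilde f}, \overline{\tilde v}}$, evaluation at $K_{-h}$ gives
\[
\overline{c_{\tilde f, \tilde v}(K_h)} \;=\; c_{\overline{\tilde f},\, \overline{\tilde v}}(K_{-h}).
\]
Expanding $\tilde v = \sum_\mu \tilde v_\mu$ into weight components and using that the bar involutions on $L(\lambda)$ and $L(\lambda)^\ast$ preserve weight spaces, both sides of the target identity become Laurent series in $q^{\langle h, \cdot\rangle}$. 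Using the identity $w_0 \mu = -\tau_0 \mu$ coming from (\ref{eq:longestel}) to match coefficients of $q^{\langle h, \eta\rangle}$ term by term, the claim reduces to the weight-by-weight identity
\[
\overline{\tilde f(\tilde v_\mu)} \;=\; \tilde f(\tilde v_{w_0 \mu}) \qquad \text{for every weight $\mu$ of $L(\lambda)$.}
\]

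Finally, I establish this weight-by-weight identity, which is the main obstacle. At the extreme weights $\mu = \lambda$ and $\mu = w_0 \lambda$, both pairings are explicit scalars coming from the decomposition of the $\chi$-spherical vectors provided by Proposition~\ref{prop:weight}, and the identity follows from the bar-invariance of the basis vectors $v_\lambda$ and $v_{w_0 \lambda}$ (imposed in the paper's conventions) combined with the $\imath$-bar invariance normalization, which forces the highest- and lowest-weight coefficients of $\tilde v$ and $\tilde f$ to be complex conjugates of one another. For intermediate weights I propagate this equality; the cleanest route is to apply Proposition~\ref{prop:WZ} to a reduced expression for the longest element of $\Br(W^\Sigma)$, using the Wang-Zhang invariance on the quantum torus to transport the pairing from $\tilde v_\mu$ to $\tilde v_{w_0 \mu}$, and to control the correction terms by the upper-triangular intertwining property of $\Upsilon$ from Theorem~\ref{thm:intertwiner}. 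The technical heart of the argument is verifying the operator identity $\tilde f(\Upsilon^{-1} K_{-h}\, \Upsilon^{-1} \tilde v) = \tilde f(K_{\tau_0 h} \tilde v)$ on the spherical line; here the uniqueness statements in Lemma~\ref{lem:unique} and Theorem~\ref{thm:unique2} reduce the verification to a check on the extreme weight components already handled.
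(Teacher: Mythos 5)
Your proposal correctly identifies the main ingredients (Lemma~\ref{lem:upsion}, the bar involution $\psi$ via diagram (\ref{prop:kashi}), and Proposition~\ref{prop:WZ} for the longest element), and the reduction to the weight-by-weight identity $\overline{\tilde f(\tilde v_\mu)}=\tilde f(\tilde v_{w_0\mu})$ is valid as a target. However, the argument that is supposed to establish that identity is not an argument: it is a list of ingredients with no mechanism. Two concrete problems.

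First, the ``uniqueness reduces the verification to the extreme weight components'' step is not correct as stated. Lemma~\ref{lem:unique} and Theorem~\ref{thm:unique2} say that the $\chi$-spherical vector, and hence the $\chi$-spherical matrix coefficient, is unique up to scalar. That does let you conclude that if two spherical functions are proportional then a single nonzero evaluation pins down the proportionality constant; it does not let you deduce a family of scalar identities ``$\overline{\tilde f(\tilde v_\mu)}=\tilde f(\tilde v_{w_0\mu})$ for each $\mu$'' from the two extreme cases. Nothing in the setup says that the function $\mu\mapsto \tilde f(\tilde v_\mu)$ is determined by its values at $\lambda$ and $w_0\lambda$. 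Your extreme-weight claim itself is also shaky: $\Upsilon$ is upper-triangular (Theorem~\ref{thm:intertwiner}), so the relation $\Upsilon\overline{\tilde v}=\tilde v$ forces $\overline{\tilde v_{w_0\lambda}}=\tilde v_{w_0\lambda}$ but says nothing so clean about the top component, and ``complex conjugate'' is the wrong notion here in any case, since the involution is $q\mapsto q^{-1}$ on $\F$.

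Second, and more structurally, you have conflated the longest element $\boldsymbol{w}_0=w_\bullet w_0$ of the \emph{relative} Weyl group $W^\Sigma$ (which is what Proposition~\ref{prop:WZ} gives you access to) with the longest element $w_0$ of $W$ (which is what $\tau_0$ comes from). The difference is the factor $w_\bullet$, and handling it requires Lemma~\ref{lem:uqbullet}, i.e.\ the fact that $\chi|_{\uq_\bullet}=\epsilon$, so that the spherical vectors are unchanged by $K_\mu$ for $K_\mu\in\uq_\bullet$. You never invoke this, so your argument would, even if otherwise completed, land at $K_{w_\bullet w_0 h}$ instead of $K_{\tau_0 h}$. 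The paper's proof does not decompose into weight components at all: it applies Proposition~\ref{prop:WZ} once to $\boldsymbol{w}_0$ to get $c_{f,v}(K_h)=c_{f\Upsilon,\Upsilon^{-1}v}(K_{w_\bullet w_0 h})$ directly, converts $f\Upsilon,\Upsilon^{-1}v$ to $\overline f,\overline v$ using the $\imath$-bar normalization, kills the $w_\bullet$ with Lemma~\ref{lem:uqbullet}, and finishes with $\psi$. Avoiding the weight decomposition is exactly what makes the ``propagation'' step you could not fill in unnecessary.
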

\begin{proof}
	Let $c_{f,v}\in L(\lambda)^\ast\otimes L(\lambda)$ be a $\chi$-spherical function for $(\uqb,\uqb)$. By Lemma \ref{lem:upsion}, we may assume that $f$ and $v$ are invariant under the $\iota$-bar involutions. 
	The longest element $\boldsymbol{w}_0$ of $W^\Sigma$ equals $w_\bullet w_0$, cf. \cite{Dobson2019}. Let $\boldsymbol{w}_0=\boldsymbol{r}_{i_1}\dots \boldsymbol{r}_{i_n}$ be a reduced expression of $\boldsymbol{w}_0$, then the factorization of the quasi-$K$ matrix \cite[\mbox{Thm 8.1}]{Wang2023} implies that
	$\boldsymbol{T}'_{\boldsymbol{w}_0,-1}(K_h)=\Upsilon K_{\boldsymbol{w}_0h}\Upsilon^{-1}.$ As a result of Lemma \ref{lem:uqbullet}, we have $\chi|_{\uq_\bullet}=\epsilon$. Let $h\in Y$.
	By applying Proposition \ref{prop:WZ} to $\boldsymbol{w}_0$ and using that $\chi|_{\uq_\bullet}=\epsilon$, it follows that have\allowdisplaybreaks{
		\begin{align*}
			c_{f,v}(K_h)&\stackrel{{Prop \,\ref{prop:WZ}}}{=}c_{f \Upsilon,\Upsilon^{-1}v}(K_{w_\bullet w_0h})\\
			&\stackrel{{(\ref{eq:longestel})}}{=}c_{f \Upsilon,\Upsilon^{-1}v}(K_{-w_\bullet \tau_0(h)})\\
			&\stackrel{{Lem\, \ref{lem:upsion}}}{=}c_{\overline{f},\overline{v}}(K_{-w_\bullet \tau_0(h)})\\
			&=c_{\overline{f},\overline{v}}(K_{-\tau_0(h)+\mu})\qquad \text{where}\qquad K_\mu\in \uq_\bullet\\
			&\stackrel{{Lem\, \ref{lem:uqbullet}}}{=}c_{\overline{f},\overline{v}}(K_{-\tau_0(h)})\\
			&\stackrel{{ (\ref{eq:barinv})}}{=}\overline{c_{f,v}(K_{\tau_0(h)})}.
	\end{align*}}
	This is the desired result.
\end{proof}
\subsection{Symmetries for $(\mathbf{\uqb^\chi},\mathbf{\uqb})$ spherical functions}\label{sec:weylinv}
In this section we turn our attention to relative Weyl group symmetries when restricted to the quantum torus. We show that for each coideal subalgebra $\uqb$, there exists a coideal algebra $\uqds$ such that the related spherical functions are Weyl group invariant. The strategy for showing Weyl group invariance of spherical functions in rank one is as follows: let $v\in L(\lambda)$ be a $\chi$-spherical vector for $\uqb$. Through the Shapovalov form one may identify $\overline{v}\in L^r(\lambda)\cong L(\lambda)^\ast$ as a linear functional on $L(\lambda)$. Using the Wang-Zhang braid group operators one shows that $\Res(c_{\overline{v},v})\in \qfaaws$. Hence, we want to find a coideal subalgebra $\uqds$ such that $\overline{v}\in L^r(\lambda)$ is a spherical vector. We establish this in Proposition \ref{prop:finaly}. We start by stating a preliminary technical lemma regarding the action of $T_{w_\bullet,-1}''(E_i)$ on spherical vectors. 
\begin{lem}\label{lem:lusztiglong}
	Let $\lambda\in X^+$ be a dominant weight, and let $v\in L(\lambda)$ be a $\chi$-spherical vector for $\uqbs$. Then for each $i\in \I_\circ$ it holds that $T_{w_\bullet,-1}''(E_i)v=T_{w_\bullet,+1}''(E_i)v$.
\end{lem}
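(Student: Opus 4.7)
The plan is to reduce the identity, via the intertwining property of the Lusztig braid group operators, to an assertion inside the $\uq_\bullet$-submodule of $L(\lambda)$ cyclically generated by $E_iv$, and then verify that assertion by a type-by-type analysis of the rank-one Satake subdiagram attached to $i$.

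For the reduction step, Lemma \ref{lem:uqbullet} gives $\chi|_{\uq_\bullet}=\epsilon$, so $E_jv=F_jv=0$ and $K_jv=v$ for every $j\in\I_\bullet$, and hence $\F v$ is the trivial $\uq_\bullet$-module. Evaluating the defining series for $T''_{j,e}$ on a vector annihilated by $E_j,F_j$ in the zero $\alpha_j^\vee$-weight space collapses it to the single summand with $a=b=c=0$, so $T''_{j,e}(v)=v$ for all $j\in\I_\bullet$ and both $e=\pm1$. Iterating along a reduced expression of $w_\bullet$ gives $T''_{w_\bullet,e}(v)=v$, and the module--algebra intertwining then yields
\[
T''_{w_\bullet,e}(E_i)\,v \;=\; T''_{w_\bullet,e}(E_i)\,T''_{w_\bullet,e}(v) \;=\; T''_{w_\bullet,e}(E_iv),
\]
so the lemma becomes the identity $T''_{w_\bullet,-1}(E_iv)=T''_{w_\bullet,+1}(E_iv)$ inside $L(\lambda)$.

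The next step is to interpret this identity via the adjoint $\uq_\bullet$-module $\mathrm{ad}(\uq_\bullet)(E_i)\subset\uq$. Writing $\triangle(Y)=Y_{(1)}\otimes Y_{(2)}$ for $Y\in\uq_\bullet$ and using $Y_{(2)}v=\epsilon(Y_{(2)})v$, one finds $Y(E_iv)=\mathrm{ad}(Y)(E_i)\cdot v$, so the cyclic submodule $\uq_\bullet\cdot E_iv\subset L(\lambda)$ is a quotient of $\mathrm{ad}(\uq_\bullet)(E_i)$ with $E_iv$ corresponding to $E_i$. The action of $T''_{w_\bullet,\pm 1}$ on $E_iv$ is therefore controlled by $\mathrm{ad}(\uq_\bullet)(E_i)$, whose $\uq_\bullet$-module structure depends only on the rank-one Satake subdiagram $(\{i,\tau(i)\}\cup\I_\bullet,\I_\bullet,\tau)$. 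When $i\in\I_{\mathrm{ns}}$ this adjoint module is one-dimensional and the identity is immediate.

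The main obstacle is the remaining case $i\notin\I_{\mathrm{ns}}$: for every type of rank-one Satake subdiagram consistent with the admissibility conditions one must verify that the two operators $T''_{w_\bullet,+1}$ and $T''_{w_\bullet,-1}$ agree on the distinguished vector $E_i\in\mathrm{ad}(\uq_\bullet)(E_i)$. Since the list of such subdiagrams is finite, this amounts to a bounded case-by-case computation, using explicit Lusztig-formula evaluations on each $\uq_\bullet$-weight space appearing in $\mathrm{ad}(\uq_\bullet)(E_i)$; this is the content of Appendix \ref{apenB}.
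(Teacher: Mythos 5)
Your proposal follows the same overall strategy as the paper's: by Lemma \ref{lem:uqbullet} the spherical vector $v$ is $\uq_\bullet$-trivial, so the identity reduces to data depending only on the rank-one Satake subdiagram attached to $i$, and one then verifies it by a finite case-by-case check as carried out in Appendix \ref{apenB}. Making $T''_{w_\bullet,e}(v)=v$ explicit and reframing the residual identity inside the cyclic adjoint $\uq_\bullet$-module $\mathrm{ad}(\uq_\bullet)(E_i)$ is a tidy repackaging of the same reduction, but the case analysis — the substantive content of the lemma — is unchanged.
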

The proof or Lemma \ref{lem:lusztiglong} will be given in Appendix \ref{apenB}. Let $v\in L(\lambda)$ be a spherical vector. With help of Lemma \ref{lem:lusztiglong}, we determine a coideal subalgebra $\uqds$ such that $\overline{v}\in L^r(\lambda)$ is a spherical vector for $\uqds$. 
\begin{prop}\label{prop:finaly}
	Let $\boldsymbol{c}$ be a balanced parameter with $\overline{c_i}=c_i^{-1}$, and let $v\in L(\lambda)$ be a $\chi$-spherical vector for $\uqb$. Then $\overline{v}\in L^r(\lambda)$ is a  $\chi'$-spherical vector for $\uqds$, where
	\begin{align}\label{eq:parametershift}
		d_i&=c_i\chi(K_i K_{\tau(i)}^{-1})(-1)^{\langle 2\rho_{\bullet}^\vee,\alpha_i\rangle}
		q_i^{-\langle \alpha_i^\vee,2\rho_\bullet\rangle}q_i^{\langle\alpha_i^\vee,\alpha_i\rangle}&&\text{and}\\
		t_i&=-c_i\overline{\chi(B_i^{\boldsymbol{c}})}q_i^{2},&&\text{for}\qquad i\in\I_\circ,\nonumber
	\end{align}
	and the character $\chi'\in\widehat{\uqd}$ satisfies
	\begin{align}\label{eq:character}
		\chi'|_{\uq_\bullet \uq_\Theta^0}=\chi|_{\uq_\bullet \uq_\Theta^0}\quad\text{ and}\quad
		\chi'(B_i)=0\quad\text{for }i\in\I_\circ.
	\end{align}
\end{prop}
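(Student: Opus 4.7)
The plan is to verify the defining identity $\overline v\cdot X=\chi'(X)\overline v$ in $L^r(\lambda)$, equivalently $\varrho(X)\overline v=\chi'(X)\overline v$ in $L(\lambda)$, for each generator $X$ of $\uqds$. For the easy generators one uses: by Lemma \ref{lem:uqbullet}, $\chi|_{\uq_\bullet}=\epsilon$, so $E_iv=F_iv=0$ for $i\in\I_\bullet$; the module bar commutes with these bar-fixed generators, so $E_i\overline v=F_i\overline v=0$, and since $\varrho$ swaps $E_i\leftrightarrow F_i$ within $\uq_\bullet$, the right action on $\overline v$ vanishes, matching $\chi'|_{\uq_\bullet}=\epsilon$. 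For $K_h\in\uq_\Theta^0$, the module bar preserves weights, so $K_h\overline v=\chi(K_h)\overline v=\chi'(K_h)\overline v$, with $\varrho(K_h)=K_h$.

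The core step is showing $\varrho(B_i^{\boldsymbol d,\boldsymbol t})\overline v=0$ for $i\in\I_\circ$, corresponding to $\chi'(B_i)=0$. First I would compute $\varrho(B_i^{\boldsymbol d,\boldsymbol t})$: from $\varrho=\omega\sigma$ combined with the Lusztig relations \eqref{eq:lusztigrel}, one deduces $\varrho(T''_{w_\bullet,+1}(E_{\tau(i)}))=T''_{w_\bullet,-1}(F_{\tau(i)})$, hence
\[\varrho(B_i^{\boldsymbol d,\boldsymbol t})=E_i+d_iK_i^{-1}T''_{w_\bullet,-1}(F_{\tau(i)})+t_iK_i^{-1}.\]
Next I would apply the module bar to the balanced sphericality equation $B_iv=\chi(B_i)v$; since the bar is an algebra homomorphism on $\uq$ with $\overline{T''_{w_\bullet,+1}(X)}=T''_{w_\bullet,-1}(\overline X)$ and since $\overline{c_i}=c_i^{-1}$, this produces
\[(F_i+c_i^{-1}T''_{w_\bullet,-1}(E_{\tau(i)})K_i)\overline v=\overline{\chi(B_i)}\overline v.\]
Applying the module bar to Lemma \ref{lem:lusztiglong} transports its conclusion to $\overline v$, so $T''_{w_\bullet,-1}(E_{\tau(i)})$ and $T''_{w_\bullet,+1}(E_{\tau(i)})$ act identically on each weight component of $\overline v$ (and hence on $K_i\overline v$).

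To reconcile the two identities I would compare them weight by weight. On each weight $\nu$ of $\overline v$, the bar equation solves $F_i\overline{v_{\nu+\alpha_i}}$ in terms of $T''_{w_\bullet,\pm1}(E_{\tau(i)})\overline{v_{\nu+\Theta(\alpha_i)}}$ and $\overline{v_\nu}$, while the desired equation involves $E_i\overline{v_{\nu-\alpha_i}}$, $T''_{w_\bullet,-1}(F_{\tau(i)})\overline{v_{\nu-\Theta(\alpha_i)}}$, and $\overline{v_\nu}$. Bridging the two requires a rank-one reduction to the Satake sub-diagram $\{i,\tau(i)\}\cup\I_\bullet$, where $\uq_\bullet$ acts on $\overline v$ through $\epsilon$; the residual $\uq_\bullet$-commutations inside $T''_{w_\bullet,+1}(E_{\tau(i)})$ produce exactly the sign $(-1)^{\langle 2\rho_\bullet^\vee,\alpha_i\rangle}$ together with $q_i^{-\langle\alpha_i^\vee,2\rho_\bullet\rangle}$. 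The commutation of $K_i^{\pm1}$ through $F_i$ contributes $q_i^{\langle\alpha_i^\vee,\alpha_i\rangle}=q_i^2$, and the $\uq_\Theta^0$-action on the relevant weight components contributes $\chi(K_iK_{\tau(i)}^{-1})$. Matching the coefficients of $E_i\overline{v_{\nu-\alpha_i}}$ and $\overline{v_\nu}$ in the substituted equation then forces $d_i$ and $t_i$ to the exact values in \eqref{eq:parametershift}; as a sanity check, setting $\I_\bullet=\emptyset$ recovers $d_i=c_iq_i^2$ and $t_i=-c_iq_i^2\overline{\chi(B_i)}$ directly.

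The main obstacle will be the rank-one reduction and the careful extraction of the $\rho_\bullet$-dependent factor $(-1)^{\langle 2\rho_\bullet^\vee,\alpha_i\rangle}q_i^{-\langle\alpha_i^\vee,2\rho_\bullet\rangle}$ from the action of $T''_{w_\bullet,+1}(E_{\tau(i)})$ on the $\uq_\bullet$-trivial component of $\overline v$, where all the $\uq_\bullet$-commutations accumulate. The hypothesis $\overline{c_i}=c_i^{-1}$ is precisely what makes the bar equation compatible with the stated value of $d_i$, and without it the matching would fail.
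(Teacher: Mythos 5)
Your setup is correct: you identify the target identity $\varrho(B_i^{\boldsymbol d,\boldsymbol t})\overline{v}=0$, compute $\varrho(B_i^{\boldsymbol d,\boldsymbol t})$ correctly via $\varrho(T''_{w_\bullet,+1}(E_{\tau(i)}))=T''_{w_\bullet,-1}(F_{\tau(i)})$, derive the bar-twisted sphericality equation $(F_i+c_i^{-1}T''_{w_\bullet,-1}(E_{\tau(i)})K_i)\overline{v}=\overline{\chi(B_i)}\,\overline{v}$, and observe that Lemma~\ref{lem:lusztiglong} transports to $\overline{v}$ through the module bar. But the core step, "reconciling the two identities weight by weight," contains a genuine gap that the appeals to "rank-one reduction" and "residual $\uq_\bullet$-commutations" do not close. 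The bar equation couples the component $\overline{v}_\nu$ to $\overline{v}_{\nu+\alpha_i}$ and $\overline{v}_{\nu+\Theta(\alpha_i)}$ (via $F_i$ and $T''_{w_\bullet,-1}(E_{\tau(i)})K_i$), while the goal equation couples $\overline{v}_\nu$ to $\overline{v}_{\nu-\alpha_i}$ and $\overline{v}_{\nu-\Theta(\alpha_i)}$ (via $E_i$ and $K_i^{-1}T''_{w_\bullet,-1}(F_{\tau(i)})$). The shifts are opposite, so the two recursions involve different triples of components, and there is no "substituted equation" whose coefficients can be matched. Your sanity check only passes because in the case $\I_\bullet=\emptyset$, $\tau=\mathrm{id}$ (type $\mathsf{AI_1}$) one has $\tau(i)=i$ and $T''_{w_\bullet,\pm1}=\mathrm{id}$, so that multiplying the bar equation by $d_iK_i^{-1}$ and using $K_i^{-1}E_iK_i=q_i^{-2}E_i$ literally reproduces the goal; as soon as $\tau\neq\mathrm{id}$ or $\I_\bullet\neq\emptyset$ the operators $F_i$, $T''_{w_\bullet,-1}(E_{\tau(i)})$ no longer collapse onto $E_i$, $T''_{w_\bullet,-1}(F_{\tau(i)})$ after any torus rescaling.

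The paper sidesteps the bar-twisted equation altogether. It computes $\overline{\varrho(B_i^{\boldsymbol d,\boldsymbol t})}\,v$ directly and, through a chain of Lusztig-braid identities --- $(\ref{eq:lusztigrel})$, the equality $T''_{w_\bullet,-1}(E_i)=(-1)^{\langle 2\rho_\bullet^\vee,\alpha_i\rangle}q_i^{-\langle\alpha_i^\vee,2\rho_\bullet\rangle}T'_{w_\bullet,-1}(E_i)$ from \cite[Lem~2.9]{Balagovic2015}, Lemma~\ref{lem:lusztiglong} applied to $v$ itself, and the $\uq_\bullet$-triviality of $v$ --- rewrites it as $d_i^{-1}K_iT''_{w_\bullet,+1}\,B_{\tau(i)}^{\uqb}\,v$ plus the $t_i$-correction. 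The value of $d_i$ in $(\ref{eq:parametershift})$ is exactly what makes the accumulated scalar in front of $T''_{w_\bullet,+1}(E_i)K_{\tau(i)}^{-1}$ equal $c_i=c_{\tau(i)}$, so that the bracket becomes $B_{\tau(i)}^{\uqb}$. One then uses $B_{\tau(i)}^{\uqb}v=\chi(B_{\tau(i)})v$, which vanishes for $i\notin\I_{\mathrm{ns}}$, while for $i\in\I_{\mathrm{ns}}$ the choice of $t_i$ cancels $\chi(B_i)$. The missing idea in your plan is not the identification of the $\rho_\bullet$-factors, but this extraction of $B_{\tau(i)}^{\uqb}$ from $\overline{\varrho(B_i^{\boldsymbol d,\boldsymbol t})}$, which transfers the vanishing back to the known sphericality of $v$ for $\uqb$ rather than to a coefficient match between two recursions going in opposite directions.
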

\begin{proof}
	By Lemma \ref{lem:uqbullet}, the action of $ \uq_\Theta^0$ is determined by the dominant weight $\lambda$ and the action of $\uq_\bullet$ on $\overline{v}\in L^r(\lambda)$ is equal to the action of the counit. It remains to determine the action of $B_i$ for $i\in \I_\circ.$ 
	Using (\ref{eq:lusztigrel}) we deduce that
	\begin{align}\label{eq:rhorel}
		\varrho (T_{w_\bullet,+1}''(E_{\tau(i)}))&=\sigma\circ \sigma \circ \varrho (T_{w_\bullet,+1}''(E_{\tau(i)}))\\
		&=\sigma\circ \omega (T_{w_\bullet,+1}''(E_{\tau(i)}))\nonumber\\
		&=\sigma (T_{w_\bullet,+1}'(F_{\tau(i)}))\nonumber\\
		&=T_{w_\bullet,-1}''(F_{\tau(i)}),\nonumber
	\end{align}
	and $\overline{T''_{w_\bullet,-1}(F_{\tau(i)})}=T''_{w_\bullet,+1}(F_{\tau(i)})$.
	By Lemma \ref{lem:lusztiglong} and because $T_{w_\bullet,+1}''(E_{\tau(i)})$ $q$-commutes with $K_i^{-1}$, we have 
	\begin{equation}\label{eq:twonmodule}
		T''_{w_\bullet,-1}(E_i)K_{\tau(i)}^{-1}v=T''_{w_\bullet,+1}(E_i)K_{\tau(i)}^{-1}v,
	\end{equation}
	and by
	\cite[\mbox{Lem 2.9}]{Balagovic2015} the following equality holds \begin{equation}\label{eq:BK}
		T''_{w_\bullet,-1}(E_i)=
		(-1)^{\langle 2\rho_{\bullet}^\vee,\alpha_i\rangle}
		q_i^{-\langle \alpha_i^\vee,2\rho_\bullet\rangle }T'_{w_\bullet,-1}(E_i).
	\end{equation}
	Using these relations, we compute for $i\notin \I_{\mathrm{ns}}$ that \allowdisplaybreaks{
		\begin{align*}
			&\overline{\varrho(B^{\boldsymbol{d,t}}_i)}v \stackrel{\mathclap{(\ref{eq:rhorel})}}{=} \overline{(E_i+d_i K_i^{-1}T_{w_\bullet,-1}''(F_{\tau(i)}))}v\\
			&\stackrel{\mathclap{(\ref{eq:lusztigrel})}}{=} \big(E_i+d_i^{-1} K_iT_{w_\bullet,+1}''(F_{\tau(i)})\big)v\\
			&= \big(E_i+d_i^{-1} K_iT_{w_\bullet,+1}'' F_{\tau(i)}T_{w_\bullet,-1}'\big)v\\
			&\stackrel{{Lem\, \ref{lem:uqbullet}}}{=} \big(E_i+d_i^{-1} K_iT_{w_\bullet,+1}'' F_{\tau(i)}\big)v\\
			&= d_i^{-1}K_iT_{w_\bullet,+1}''\big(d_i T_{w_\bullet,-1}'K_i^{-1}E_i+ F_{\tau(i)}\big)v\\
			&= d_i^{-1}K_iT_{w_\bullet,+1}''\big(d_iq_i^{-\langle\alpha_i^\vee,\alpha_i
				\rangle}  T_{w_\bullet,-1}'E_iK_i^{-1}+ F_{\tau(i)}\big)v\\
			&\stackrel{{Lem\, \ref{lem:uqbullet}}}{=} d_i^{-1}K_iT_{w_\bullet,+1}''\big(d_iq_i^{-\langle\alpha_i^\vee,\alpha_i
				\rangle}  T_{w_\bullet,-1}'E_iK_i^{-1}T_{w_\bullet,+1}''+ F_{\tau(i)}\big)v\\
			&= d_i^{-1}K_iT_{w_\bullet,+1}''\big(d_iq_i^{-\langle\alpha_i^\vee,\alpha_i
				\rangle}  T_{w_\bullet,-1}'E_iT_{w_\bullet,+1}''K_{w_\bullet d_i\alpha_i^\vee}^{-1}+ F_{\tau(i)}\big)v\\
			&\stackrel{{Lem\, \ref{lem:uqbullet}}}{=}d_i^{-1}K_iT_{w_\bullet,+1}''( d_iq_i^{-\langle\alpha_i^\vee,\alpha_i
				\rangle} T_{w_\bullet,-1}'(E_i)K_i^{-1}+ F_{\tau(i)})v\\
			&\stackrel{\mathclap{(\ref{eq:BK})}}{=}d_i^{-1}K_iT_{w_\bullet,+1}''(d_i(-1)^{\langle 2\rho_{\bullet}^\vee,\alpha_i\rangle}q_i^{\langle \alpha_i^\vee,2\rho_\bullet\rangle } q_i^{-\langle\alpha_i^\vee,\alpha_i
				\rangle} T_{w_\bullet,-1}''(E_i)K_i^{-1}+ F_{\tau(i)})v\\
			&\stackrel{{(\ref{eq:twonmodule})}}{=}d_i^{-1}K_iT_{w_\bullet,+1}''(\chi(K_i^{-1}K_{\tau(i)})d_i(-1)^{\langle 2\rho_{\bullet}^\vee,\alpha_i\rangle}q_i^{\langle \alpha_i^\vee,2\rho_\bullet\rangle }q_i^{-\langle\alpha_i^\vee,\alpha_i
				\rangle} T_{w_\bullet,+1}''(E_i)K_{\tau(i)}^{-1}+ F_{\tau(i)})v.
	\end{align*}}
	By definition of $d_i$ we have $\chi(K_i^{-1}K_{\tau(i)})d_i(-1)^{\langle 2\rho_{\bullet}^\vee,\alpha_i\rangle}
	q_i^{\langle \alpha_i^\vee,2\rho_\bullet\rangle}q_i^{-\langle\alpha_i^\vee,\alpha_i
		\rangle} =c_i$. By \cite[\mbox{Prop 10.1. (i) }]{Kolb2023} it follows that
	$$\underbrace{\big(\chi(K_i^{-1}K_{\tau(i)})d_i(-1)^{\langle 2\rho_{\bullet}^\vee,\alpha_i\rangle}
		q_i^{\langle \alpha_i^\vee,2\rho_\bullet\rangle}q_i^{-\langle\alpha_i^\vee,\alpha_i
			\rangle}T''_{w_\bullet,+1}(E_i)K_{\tau(i)}^{-1}+ F_{\tau(i)}\big)}_{=B_{\tau(i)}^{\boldsymbol{c}}\in\uqb}v=0$$
	and hence $\varrho(B^{\boldsymbol{d,t}}_i)\overline{v}=\overline{\overline{\varrho(B^{\boldsymbol{d,t}}_i)}v}=0$. 
	Next, let $i\in\I_{\mathrm{ns}}$. Using that $\overline{\varrho(F_i+c_iE_iK_i^{-1})}=c_i^{-1}q_i^{-2}K_i(F_i+c_iE_iK_i^{-1})$, we have
	\begin{align*}
		\overline{\varrho(B_i^{\boldsymbol{d,t}})}v&=\overline{\varrho(B_i^{\boldsymbol{c}})-\overline{\chi(B_i^{\boldsymbol{c}})}c_iq_i^{2}K_i^{-1}}v\\
		&=(c_i^{-1}q_i^{-2}K_i\underbrace{(F_i+c_iE_iK_i^{-1})}_{=B_i^{\boldsymbol{c}}\in \uqb}-c_i^{-1}q_i^{-2}\chi(B_i^{\boldsymbol{c}})K_i)v\\
		&=(c_i^{-1}q_i^{-2}\chi(B_i^{\boldsymbol{c}})K_i-c_i^{-1}q_i^{-2}\chi(B^{\boldsymbol{c}}_i)K_i)v\\
		&=0,
	\end{align*}
	and hence $\varrho(B_i^{\boldsymbol{d,t}})\overline{v}=\overline{\overline{\varrho(B^{\boldsymbol{d,t}}_i)}v}=0$. 
	This shows that $\overline{v}\in L^r(\lambda)$ is a spherical vector for $\uqds$ satisfying (\ref{eq:character}).
\end{proof}
\begin{rem}
	Note that the parameter $\boldsymbol{d}$ from Proposition \ref{prop:finaly} satisfies $\overline{d_i}=d_i^{-1}$, for each $i\in\I_\circ.$
\end{rem}
Recall that a parameter $\boldsymbol{c}$ is called admissible if $\boldsymbol{c}$ is balanced, satisfies $\overline{c_i}=c_i^{-1}$ and $c_{i}=q^{-(\alpha_i,w_\bullet \alpha_{\tau(i)}+2\rho_{\bullet})} \overline{c_{\tau(i)}}$ for each $i\in \I_{\circ}$, cf. (\ref{eq:admissible}).
Let $\boldsymbol{c}$ be an admissible parameter and let $\chi\in\widehat{\uqb}$ be a character that occurs in a finite dimensional $\uq$ weight module. Then we use $\uqb^\chi:=\uqds$ to denote the coideal subalgebra defined in Proposition \ref{prop:finaly}. Note that Proposition \ref{prop:finaly} depends on a dominant weight $\lambda\in X^+$, however $\uqb^\chi$ is independent of the weight $\lambda$. We extend the definition of $\mathbf{B}_{\mathbf{c}}^\chi$ to general balanced parameters $\boldsymbol{r}$ as follows: let $\boldsymbol{r}$ be a balanced parameter, let 
$\chi\in\widehat{\mathbf{B}_{\mathbf{r}}}$
be a character that occurs in a finite dimensional $\uq$ weight module, and let $\boldsymbol{a}\in (\F^\times)^{\I}$ with $a_j=1$ for $j\in\I_\bullet$ be such that $\Phi_{\boldsymbol{a}}:\uqb\to \mathbf{B}_{\mathbf{r}}$ is an isomorphism. If $V_\chi$ occurs in $L(\lambda)$, then $\chi\circ \Phi_{\boldsymbol{a}}$ is a character of $\uqb$ that occurs in $^{\Phi_{\boldsymbol{a}}}L(\lambda)\cong L(\lambda)$.
Henceforth, using Proposition \ref{prop:finaly} we may set $\mathbf{B}_{\mathbf{r}}^\chi:=\Phi_{\boldsymbol{a}}\big(\mathbf{B}_{\mathbf{c}}^{\chi\circ \Phi_{\boldsymbol{a}}}\big)$. 
\begin{rem}
We remark that that the definition of $\mathbf{B}_{\mathbf{r}}^\chi$ seems to depend on the choice of admissible parameter $\boldsymbol{c}$. By Lemma \ref{lem:muller} there exists for each $i\in\I_\circ$ an integer $l_i\in\Z$ with $\chi\circ \Phi_{\boldsymbol{a}}(B^{\mathbf{c}}_i)=[l_i]_{q_i}\sqrt{c_iq_i}$. Hence, the parameter $(\boldsymbol{d},\boldsymbol{t})$ of the coideal subalgebra $\mathbf{B}_{\mathbf{r}}^\chi$ is given by
\begin{align}\label{eq:parametershift2}
	d_i&=r_i\chi(K_i K_{\tau(i)}^{-1})(-1)^{\langle 2\rho_{\bullet}^\vee,\alpha_i\rangle}
	q_i^{-\langle \alpha_i^\vee,2\rho_\bullet\rangle}q_i^{\langle\alpha_i^\vee,\alpha_i\rangle}&&\text{and}\\
	t_i&=-r_i^{1/2}c_i^{-1/2}\cdot c_i \overline{\chi\circ \Phi_{\boldsymbol{a}}(B^{\mathbf{c}}_i)}q_i^{2},&&\text{for}\qquad i\in\I_\circ.\nonumber
\end{align}
For each $i\in\I_\circ$ the scalars $t_i=
-r_i^{1/2}c_i^{-1/2} c_i\overline{c_i^{1/2}}
 \overline{[l_i]_{q_i}\sqrt{q_i}}q_i^{2}
 =-r_i^{1/2} \overline{[l_i]_{q_i}\sqrt{q_i}}q_i^{2}$ and $d_i$ do not depend on the choice of admissible parameter $\boldsymbol{c}$.
\end{rem}
Let 
\begin{align*}
B_i^{\boldsymbol{r},\chi}&= F_i+r_i\chi(K_i K_{\tau(i)}^{-1})(-1)^{\langle 2\rho_{\bullet}^\vee,\alpha_i\rangle}
q_i^{-\langle \alpha_i^\vee,2\rho_\bullet\rangle}q_i^{\langle\alpha_i^\vee,\alpha_i\rangle} T''_{w_\bullet,+1}(E_{\tau(i)})K_i^{-1}\\
&\quad-r_i^{1/2} \overline{[l_i]_{q_i}\sqrt{q_i}}q_i^{2}K_i^{-1}
\end{align*}
with $i\in\I_\circ$ denote generators of $\mathbf{B}_{\mathbf{r}}^\chi$.
\begin{defi}[Akin characters]
	Let $\boldsymbol{c}$ be a balanced parameter, and let $\chi$ and $\chi'$ be characters of ${\uqb}$ and ${\uqbschi}$ respectively. Then we say that the character $\chi'$ is \textit{akin} to the character $\chi$ if the following two conditions hold:
	\begin{enumerate}[(i)]\label{enum:eq}
		\item For each $x\in\uq_\bullet \uq_\Theta^0$ it holds that $\chi'(x)=\chi(x)$.
		\item For each $i\in\I_\circ$ it holds that $\chi'(B_i^{\boldsymbol{c},\chi})=0$, with $B_i^{\boldsymbol{c},\chi}$ defined above.\\
	\end{enumerate}
	We write $\qfachb$ to denote the $\F$-vector space of $(\chi',\chi)$-spherical functions for $(\uqbschi,\uqb)$, where $\chi'$ is akin to $\chi$.
\end{defi}
Note that the character $\chi'\in\widehat{\uqb^\chi}$ is uniquely determined by the equations given in Definition \ref{enum:eq}.
\begin{rem}
	Let $\boldsymbol{c}$ be an admissible parameter. By Proposition \ref{prop:finaly} the space of spherical functions $\qfachb$ is nontrivial. Using the isomorphism $\Phi_{a}$ and Lemma \ref{lem:tech4}, it follows that the space is nontrivial for any balanced parameter.
\end{rem}
We continue to show that in rank one, the spherical functions in $\qfachb$ are Weyl group invariant. For this we make use of the Shapovalov form as introduced in (\ref{eq:Shapovalof}). We first need a preliminary Lemma regarding the evaluation of the Shapovalov form. Recall the quotient maps $\cl$ from Section \ref{sec:limit}. Fix an integer $d\geq 1$ and let $\uq_{\Q(q^{1/d})}$ denote the $\Q(q^{1/d})$ algebra generated by the elements $E_i,F_i$ and $K_i$ for $i\in \I$. 
Similarly, let $\mathbf{U}_\Q(\mathfrak{g})$ denote the $\Q$ algebra generated by the elements $\cl(E_i),\cl(F_i)$ and $ \cl\big(\frac{K_i-1}{q-1}\big)$ for $i\in \I$. We write $\varkappa: \mathbf{U}(\mathfrak{g})\to \mathbf{U}(\mathfrak{g})$ for the algebra anti-automorphism that satisfies
$$\cl (E_i)\mapsto \cl (F_i),\quad \cl (F_i)\mapsto \cl(E_i),\quad \cl\big(\frac{K_i-1}{q-1}\big)\mapsto \cl\big(\frac{K_i-1}{q-1}\big),\quad \text{for}\quad  i\in \I.$$
According to \cite[\mbox{II 2.3.2}]{Kumar2002} the $\mathbf{U}_\Q(\mathfrak{g})$ module $L_\Q(\lambda)^1:=\mathbf{U}_\Q(\mathfrak{g}) \cl(v_\lambda)$ admits a unique symmetric, non-degenerate bilinear form $S^\lambda$ with $S^\lambda(\cl (v_\lambda),\cl (v_\lambda))=1$ satisfying
\begin{equation}\label{eq:contravariant}
S^\lambda( X v,w)=S^\lambda(v, \varkappa(X)w),  \qquad\text{where}\qquad v,w\in L(\lambda)^1,\, X\in \mathbf{U}(\mathfrak{g}).
\end{equation}
Bilinear forms satisfying (\ref{eq:contravariant}) are referred to as \textit{contravariant}. Additionally, for each dominant weight $\lambda\in X^+$ we set $L_{\Q(q^{1/d})}(\lambda):= \uq_{\Q(q^{1/d})} v_\lambda$. 
\begin{lem}\label{eq:notzero}
Let $\lambda\in X^+$ be a dominant weight and $d\geq 1$ be a integer. Then for each $0\neq v\in L_{\Q(q^{1/d})}(\lambda)$ we have $(\overline{v},v)\neq 0$.
\end{lem}
\begin{proof}
Without loss of generality, we may assume that $v\in L^\A(\lambda)$ with $\cl(v)\neq 0$. Let us define a bilinear form $S^\lambda:L(\lambda)^1\times L(\lambda)^1\to \C$ by setting
$$S^\lambda(\cl (v),\cl (w)):=\cl \big((v,w)\big),\qquad \qquad \text{for}\qquad v,w\in L^\A(\lambda).$$
It is readily seen that $S^\lambda$ is a well-defined, contravariant, non-degenerate symmetric bilinear form with $S^\lambda(\cl (v_\lambda),\cl (v_\lambda))=1$. As a consequence of \cite[\mbox{II 2.3.2 \& 2.3.13}]{Kumar2002} the bilinear form $S^\lambda$ is positive definite on $L_\Q(\lambda)^1$. Furthermore, since
$\cl (\widehat{\uq}\cap \uq_{\Q(q^{1/d})})=\mathbf{U}_\Q(\mathfrak{g})$ and $\cl(\overline{v})=\cl (v)$ we deduce that
\begin{align*}
0\neq S^\lambda(\cl (v),\cl (v))=S^\lambda(\cl (\overline{v}),\cl (v))= \cl (\overline{v},v).
\end{align*}
Thus, it follows that $(\overline{v},v)\neq 0$.
\end{proof}
Let $\boldsymbol{s}\in (\F^\times)^{\I}$ denote the tuple given by $s_i:=(-1)^{\langle 2\rho^\vee_\bullet,\alpha_i\rangle}$, for $i\in \I$.
\begin{prop}\label{prop:rang1}
	Let $(\I,\I_\bullet,\tau)$ be a Satake diagram of rank one and let $\boldsymbol{c}$ be an admissible parameter. Then each spherical function $\varphi\in {\qfachb}$ satisfies 
	$$\varphi(x)=\varphi\big(\Phi_{\boldsymbol{s}}\circ\varrho\circ \mathcal T_{\boldsymbol{r_i},-1}'(x)\big),\qquad \text{for each}\qquad x\in\uq,i\in\I_\circ.$$
	In particular,
	$\varphi$ is Weyl group invariant.
\end{prop}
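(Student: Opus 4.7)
The plan is to reduce to single isotypic components and work through the Shapovalov form. By Theorem~\ref{thm:unique2} together with Proposition~\ref{prop:finaly}, every $\varphi\in\qfachb$ is a sum of generating matrix coefficients $\varphi_\lambda=c_{\overline{v},v}\in L(\lambda)^\ast\otimes L(\lambda)$, where $v\in L(\lambda)$ is a $\chi$-spherical vector for $\uqb$ and $\overline{v}\in L^r(\lambda)$ corresponds, via the Shapovalov pairing, to the $\chi'$-spherical functional $w\mapsto(\overline{v},w)$ on $L(\lambda)$. It suffices to check the identity on each $\varphi_\lambda$. By $\varrho$-contravariance of the Shapovalov form,
$$
\varphi_\lambda(x)=(\overline{v},xv)\qquad\text{and}\qquad\varphi_\lambda(\varrho(\mathcal T'_{\boldsymbol{r_i},-1}(x)))=(\mathcal T'_{\boldsymbol{r_i},-1}(x)\overline{v},v),
$$
so the claim reduces to the module-level identity $(\overline{v},xv)=(\mathcal T'_{\boldsymbol{r_i},-1}(x)\overline{v},v)$ for all $x\in\uq$.

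Next I would extract a module-level consequence of the braid-invariance of $\chi$. Corollary~\ref{cor:invariance} gives $\chi\circ\boldsymbol{T}'_{i,-1}=\chi$, so $\boldsymbol{T}'_{i,-1}(v)$ is again $\chi$-spherical and Lemma~\ref{lem:unique} forces $\boldsymbol{T}'_{i,-1}(v)=\eta_1 v$ for some nonzero $\eta_1\in\F$. After rescaling $v$ via Lemma~\ref{lem:upsion} so that $\Upsilon_i\overline{v}=v$, the factorization $\boldsymbol{T}'_{i,-1}=\Upsilon_i\mathcal T'_{\boldsymbol{r_i},-1}\Upsilon_i^{-1}$ from Theorem~\ref{thm:braid} produces $\mathcal T'_{\boldsymbol{r_i},-1}(v)=\eta_1\overline{v}$. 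Using $\overline{v}=\Upsilon_i^{-1}v$ and the module-map property of $\boldsymbol{T}'_{i,-1}$, a direct substitution yields the key identity
$$
\mathcal T'_{\boldsymbol{r_i},-1}(x)\overline{v}=\eta_1^{-1}\,\mathcal T'_{\boldsymbol{r_i},-1}(xv)\qquad\text{for all }x\in\uq.
$$

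The heart of the proof is then to verify that the linear functional $u\mapsto(\mathcal T'_{\boldsymbol{r_i},-1}(u),v)$ on $L(\lambda)$ equals $\eta_1(\overline{v},u)$; evaluating at $u=xv$ and combining with the previous display collapses the right-hand side of the desired identity to $\varphi_\lambda(x)$. The strategy I would follow is to argue that $u\mapsto(\mathcal T'_{\boldsymbol{r_i},-1}(u),v)$ is itself a $\chi'$-spherical vector for $\uqb^\chi$ in $L(\lambda)^\ast$, so that Lemma~\ref{lem:unique} forces proportionality to $\overline{v}^\ast:=(\overline{v},{-})$, with the scalar pinned down by evaluation at $u=v$ through $\mathcal T'_{\boldsymbol{r_i},-1}(v)=\eta_1\overline{v}$. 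The $\chi'$-sphericity check is the main obstacle: it essentially dualizes the construction of $\uqb^\chi$ in Proposition~\ref{prop:finaly}, and amounts to showing that for $b\in\uqb^\chi$ the element $\varrho(\mathcal T'_{\boldsymbol{r_i},-1}(b))$ acts on $v$ as the scalar $\chi'(b)$. This can be handled case-by-case with the help of Lemma~\ref{lem:lusztiglong}, which controls the action of $T''_{w_\bullet,\pm1}(E_i)$ on the spherical vector and is tailored to the rank-one $\mathsf{AI}_1$ subdiagrams.

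Finally, Weyl group invariance on the quantum torus follows by specializing $x=K_h$ with $h\in Y_\Theta$: then $\mathcal T'_{\boldsymbol{r_i},-1}(K_h)=K_{\boldsymbol{r_i}h}$ lies in $\mathcal{A}$ and is fixed by $\varrho$, so the main identity becomes $\varphi(K_h)=\varphi(K_{\boldsymbol{r_i}h})$. Since $W^\Sigma=\{1,\boldsymbol{r_i}\}$ in rank one, this gives full $W^\Sigma$-invariance of $\Res\varphi$.
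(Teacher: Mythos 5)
Your reduction is sound and the logical skeleton is correct: reducing to $\varphi_\lambda=c_{\overline v,v}$ on a single isotypic component, rewriting the target identity as $(\overline v,xv)=(\mathcal T'_{\boldsymbol{r_i},-1}(x)\overline v,v)$, using $\boldsymbol T'_{i,-1}(v)=\eta_1 v$ (from Corollary~\ref{cor:invariance} and Lemma~\ref{lem:unique}) together with the factorization of Theorem~\ref{thm:braid} to get $\mathcal T'_{\boldsymbol{r_i},-1}(v)=\eta_1\overline v$, and finally observing that everything follows once one knows $g:u\mapsto(\mathcal T'_{\boldsymbol{r_i},-1}(u),v)$ equals $\eta_1(\overline v,\,\cdot\,)$. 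But the paper does \emph{not} proceed this way, and the pivot on which your argument turns — the $\chi'$-sphericity of $g$ for $\uqb^\chi$ — is exactly the gap.

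As you yourself note, that sphericity claim is equivalent to the operator identity $\varrho(\mathcal T'_{\boldsymbol{r_i},-1}(b))v=\chi'(b)v$ for every $b\in\uqb^\chi$, that is, to a description of $\varrho\circ\mathcal T'_{\boldsymbol{r_i},-1}$ on all generators of $\uqb^\chi$ (the $B_j^\chi$, $\uq_\bullet$ and $\uq^0_\Theta$). This is genuine work: $\boldsymbol{r_i}$ is a long word in rank one, so $\mathcal T'_{\boldsymbol{r_i},-1}$ is a composite of many simple braid operators, and $\varrho$ mixes $E$'s and $F$'s. Lemma~\ref{lem:lusztiglong} controls only $T''_{w_\bullet,\pm1}(E_i)v$ — a single specific input — and does not, on its own, give the action of $\varrho\circ\mathcal T'_{\boldsymbol{r_i},-1}$ on $B_j^\chi$. (Also, Lemma~\ref{lem:lusztiglong} is not ``tailored to $\mathsf{AI}_1$''; it is proved for every rank-one Satake type in Appendix~\ref{apenB}.) Asserting that a case-by-case check ``can be handled'' is not a proof; it leaves precisely the hardest step unverified.

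The paper's own proof (Proposition~\ref{prop:rang1}) is shorter and avoids this problem. It normalizes $v$ to be $\iota$-bar invariant (Lemma~\ref{lem:upsion}), writes $(\boldsymbol T'_{i,-1})^{-1}v=\vartheta(i)v$, and then uses the bar-compatibility of the Shapovalov form together with the relation $\overline{\varrho((\mathcal T'_{i,-1})^{-1})}=\mathcal T'_{i,-1}$ (Jantzen 8.2, 8.4) to obtain directly $\vartheta(i)\overline{\vartheta(i)}\,\varphi(x)=\varphi(\varrho(\mathcal T'_{\boldsymbol{r_i},-1}(x)))$. The only remaining task is to pin down $\vartheta(i)\overline{\vartheta(i)}=1$, which is done by evaluating $\varphi$ on the specific projector $X=E_JF_{\tau_0(J)}\sigma(E_J)\sigma(F_{\tau_0(J)})$, which is invariant under $\varrho\circ\mathcal T'_{w_0,-1}$ by~(\ref{eq:tauo}). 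This replaces your open sphericity check by a concrete one-line evaluation. So: correct strategy, valid partial reductions, but a genuine unproved core — you would need to actually carry out (or replace) the computation of $\varrho(\mathcal T'_{\boldsymbol{r_i},-1}(b))v$ for all generators $b$ of $\uqb^\chi$, or switch to the paper's Shapovalov/$\iota$-bar route.
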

\begin{proof}
	First suppose that the parameter $\boldsymbol{c}$ is uniform. Let $\varsigma\in {\qfachb}\cap L^\ast(\lambda)\otimes L(\lambda)$ be a nonzero spherical function. By Theorem \ref{thm:unique2} there exists a $\chi$-spherical vector $v\in L(\lambda)$ for $\uqb$, which we may assume to be $\iota$-bar invariant by Lemma \ref{lem:upsion}. Using Proposition \ref{prop:finaly}, we see that the matrix entry 
	$$\big[x\mapsto \varphi(x):=(\overline{v},xv)\big]\in {\qfachb}$$ is a spherical function for $(\uqb^\chi,\uqb)$.
	By Theorem \ref{thm:unique2}, it follows that the space ${\qfachb}\cap L(\lambda)^\ast\otimes L(\lambda)$ is spanned by $\varphi$. So it suffices to show that $\varphi$ is Weyl group invariant. 
	By Proposition \ref{prop:charinv} and Lemma \ref{lem:unique}, we have
	\begin{align*}
		(\boldsymbol{T}_{i,-1}')^{-1}v&=(\mathcal T_{\boldsymbol{r_i},-1}')^{-1}\Upsilon^{-1}v=\vartheta(i)v\qquad\text{where}\quad 0\neq \vartheta(i)\in \F.
	\end{align*}
	Consequently it holds that \begin{equation}\label{eq:important}
	(\overline{(\boldsymbol{T}'_{i,-1})^{-1}v},x(\boldsymbol{T}'_{i,-1})^{-1}v)=\overline{\vartheta(i)}\vartheta(i)(\overline{v},xv),\qquad \text{for each }x\in \uq
	\end{equation}
	Let $x\in \uq$.
	Using that $\overline{\varrho((\mathcal{T}'_{\mathbf{r}_i,-1})^{-1})}=\mathcal{T}'_{\mathbf{r}_i,-1}$, cf. \cite[\mbox{8.2 \& 8.4}]{Jantzen1996}, we have \begin{align}\label{eq:varrho}
		(\overline{(\boldsymbol{T}'_{i,-1})^{-1}v},x(\boldsymbol{T}'_{i,-1})^{-1}v)&=(v, \mathcal T_{\boldsymbol{r_i},-1}'x(\mathcal T_{\boldsymbol{r_i},-1}')^{-1}\overline{v})\\
		&=(v,  \mathcal T_{\boldsymbol{r_i},-1}'(x) \overline{v})\nonumber\\
		&=(\mathcal \varrho(\mathcal T_{\boldsymbol{r_i},-1}'(x))v,  \overline{v}\nonumber)\\
		&=(\overline{v},\varrho(\mathcal T_{\boldsymbol{r_i},-1}'(x))v)\nonumber.
	\end{align}
	It remains to show that $\vartheta(i)\overline{\vartheta(i)}=1$. Using (\ref{eq:important}) and $(\ref{eq:varrho})$ we see that it is sufficient to show that $(\overline{v},v)\neq 0$. As $\boldsymbol{c}$ is admissible there exists an integer $d\geq1$ such that $c_i\in \Q(q^{1/d})$ for each $i\in\I_\circ$. Therefore, without loss of generality we may assume that $v\in  L_{\Q(q^{1/d})}(\lambda)$. Using Lemma \ref{eq:notzero} we conclude that $(\overline{v},v)\neq 0$.
In the case where the admissible parameter is not uniform, a similar argument remains valid. Recall that $\boldsymbol{s}\in (\F^\times)^{\I}$ denotes the tuple given by $s_i:=(-1)^{\langle 2\rho^\vee_\bullet,\alpha_i\rangle}$, for $i\in \I$. One may invoke the intertwining properties of the quasi-$K$ matrix to show that $v$ may be scaled such that $\Upsilon ^{-1} v= \overline{\Phi_{\boldsymbol{s}}(v)}=\Phi_{\boldsymbol{s}}(\overline{v})$. By applying Lemma \ref{lem:tech4} it follows that{\belowdisplayskip=-12pt
\begin{align*}
(\overline{v},xv)&=(\overline{(\boldsymbol{T}'_{i,-1})^{-1}v},x(\boldsymbol{T}'_{i,-1})^{-1}v)\\
&=(\Phi_{\boldsymbol{s}}(\overline{v}),\varrho(\mathcal T_{\boldsymbol{r_i},-1}'(x))\Phi_{\boldsymbol{s}}(v))\\
&=(\overline{v},\Phi_{\boldsymbol{s}}\circ \varrho\circ\mathcal T_{\boldsymbol{r_i},-1}'(x)v)\nonumber.
\end{align*}\qedhere}
\end{proof}
Through a rank one reduction we show Weyl group invariance for general rank. Recall the space $Y_{\Theta}=\{h\in Y:\Theta(h)=-h\}$ from (\ref{eq:thethainv}) and the quantum torus $\mathcal A=\F[K_h:h\in Y_\Theta]$.
\begin{thm}\label{thm:weylgrpinv1}
	Let $\boldsymbol{c}$ be an admissible parameter. Then restriction to the quantum torus defines a $\F$-linear map
	$$\mathrm{Res}: \bigoplus_{\chi\in \widehat{\uqb}}{^{\uqb^\chi} \qfa^{\uqb}_\chi}\to \qfaaws.$$
\end{thm}
\begin{proof}
	Let $\varphi=c_{f,v}\in L^\ast(\lambda)\otimes L(\lambda)$ be a $(\chi',\chi)$-spherical function for $(\uqb^\chi,\uqb)$, where $\chi'$ is akin to $\chi$.
	Let $i\in \I_\circ$ and set $Y_{i,\Theta}^\perp=\{h\in Y_{\Theta}: \langle h,\alpha_i\rangle=0\}$. The free $\Z$-module $Y_{\Theta}$ has a decomposition $Y_{\Theta}=Y_{i,\Theta}^\perp\oplus Y_{i,\Theta}$. 
	Recall the rank one coideal subalgebra $\uqb^i\subset \uq_i$ corresponding to the rank one diagram Satake diagram $(\{i,\tau(i)\}\cup \I_\bullet,\I_\bullet,\tau)$ as introduced in Section \ref{subsec:qsp}. Let $h\in  Y_{i,\Theta}^\perp$. 
	Consider the matrix coefficient
	$$\widehat{\varphi}: \uq_i\to \F\qquad\text{where}\qquad x\mapsto \widehat{\varphi}(x):=\varphi (XK_{h})\qquad\text{for}\qquad x\in \uq_i.$$
	We claim that $\widehat{\varphi}$ is a spherical function for $((\uqb^i)^{\chi},\uqb^i)$. 
	Because $$\langle h,\Theta(\alpha_i)\rangle=\langle \Theta(h),\alpha_i\rangle=\langle -h,\alpha_i\rangle=0,$$
	it holds that 
	\begin{align*}
		K_h T''_{w_\bullet,+1}(E_{\tau(i)})
		&=q^{\langle h,\alpha_i\rangle}T''_{w_\bullet,+1}(E_{\tau(i)})K_h\\
		&=T''_{w_\bullet,+1}(E_{\tau(i)})K_h,
	\end{align*}
	and 
	\begin{align*}
		K_h F_i=q^{-\langle h,\alpha_i\rangle}F_iK_h=F_iK_h.
	\end{align*}
	This means that $B_iK_h=K_hB_i$.
	A similar argument shows that $B_{\tau(i)} K_h=B_{\tau(i)} K_h$. 
	Trivially $K_h$ commutes with $\uq_{\Theta}^0$. 
	Let $j\in \I_\bullet$, then it holds that $$K_hE_j= q^{\langle h,\alpha_j\rangle }E_jK_h\qquad\qquad K_hF_j= q^{-\langle h,\alpha_j\rangle}F_jK_h.$$
	Since $\chi|_{\uq_\bullet}=\epsilon$, we may conclude that $\widehat{\varphi}$ is a $(\chi',\chi)$-spherical function for the pair
	$((\uqb^\chi)^i,\uqb^i)=((\uqb^i)^{\chi},\uqb^i)$. Because $\chi'$ is akin to $\chi$, the restrictions remain akin.
	Let $a\in Y_{i,\Theta}\cap Y_{i}$, where $Y_i\subset Y$ is part of the root datum corresponding to the quantum group $\uq_i$. 
	By Theorem \ref{prop:rang1}, we have $\widehat{\varphi}(K_{\boldsymbol{r_i} a})=\widehat{\varphi}(K_{a})$. Furthermore, using that $\langle h,\alpha_i\rangle=0$ and $\langle h,\Theta(\alpha_i)\rangle=0$ we have $\boldsymbol{r_i} h=h$. Using these relations we compute that
	\begin{align*}
		\varphi(K_{h+a})&=\widehat{\varphi}(K_{a})\\
		&=\widehat{\varphi}(K_{\boldsymbol{r_i} a})\\
		&=\varphi(K_{h+\boldsymbol{r_i} a})\\
		&=\varphi(K_{\boldsymbol{r_i}h+\boldsymbol{r_i} a})\\
		&=\varphi(K_{\boldsymbol{r_i}(h+ a)}).
	\end{align*}
	As $Y_{i,\Theta}^\perp\oplus (Y_{i,\Theta}\cap Y_{i})=Y_{\Theta}$, this shows that $\varphi$ is Weyl group invariant.
\end{proof}
The remaining goal is to relax the assumption on the parameter. 
\begin{thm}\label{thm:mainthm}
	Let $\boldsymbol{d}$ be a balanced parameter. Then restriction to the quantum torus defines a $\F$-linear map
	$$\mathrm{Res}: \bigoplus_{\chi\in \widehat{\uqd}}{^{\uqd^\chi} \qfa^{\uqd}_\chi}\to \qfaaws.$$
\end{thm}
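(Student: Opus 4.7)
The plan is to reduce to Theorem \ref{thm:weylgrpinv1} by transporting everything through the Hopf-algebra automorphism $\Phi_{\boldsymbol{a}}$ of equation (\ref{eq:iso}). Choose a uniform balanced parameter $\boldsymbol{c}$ (for example $\boldsymbol{c}^\diamondsuit$) and, using Lemma \ref{lem:tech4}(iii), pick $\boldsymbol{a}\in(\F^\times)^\I$ with $a_j=1$ for $j\in\I_\bullet$ and $a_i=a_{\tau(i)}$ for $i\in\I_\circ$ such that $\Phi_{\boldsymbol{a}}(\uqb)=\uqd$. For each $\chi\in\widehat{\uqd}$ that appears in a finite dimensional weight module, the pullback $\chi':=\chi\circ\Phi_{\boldsymbol{a}}\in\widehat{\uqb}$ appears in the twisted module ${^{\Phi_{\boldsymbol{a}}}L(\lambda)}\cong L(\lambda)$, and by the very definition of $\uqd^\chi$ recalled in Section \ref{sec:weylinv} we have $\Phi_{\boldsymbol{a}}(\uqb^{\chi'})=\uqd^\chi$.

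I would then verify that the akin character relation is preserved under $\Phi_{\boldsymbol{a}}$: if $\eta\in\widehat{\uqd^\chi}$ is akin to $\chi$ in the sense of Definition \ref{enum:eq}, then $\eta\circ\Phi_{\boldsymbol{a}}\in\widehat{\uqb^{\chi'}}$ is akin to $\chi'$, because $\Phi_{\boldsymbol{a}}$ acts as the identity on $\uq_\bullet\uq_\Theta^0$ and sends the generators $B_i^{\chi'}$ of $\uqb^{\chi'}$ to scalar multiples of the generators $B_i^{\chi}$ of $\uqd^\chi$. With this in place, Lemma \ref{lem:tech4}(iv) implies that for any $\varphi=c^{L(\lambda)}_{f,v}\in{^{\uqd^\chi}\qfa^{\uqd}_\chi}$, the transported matrix coefficient $\tilde\varphi:=c^{L(\lambda)}_{\Phi^\ast_{\boldsymbol{a}}(f),\Phi_{\boldsymbol{a}}(v)}$ lies in ${^{\uqb^{\chi'}}\qfa^{\uqb}_{\chi'}}$. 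Since $\boldsymbol{c}$ is uniform balanced, Theorem \ref{thm:weylgrpinv1} then yields $\Res(\tilde\varphi)\in\qfaaws$.

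Finally I would observe that $\Res(\tilde\varphi)=\Res(\varphi)$, from which the theorem follows. Indeed, $\Phi_{\boldsymbol{a}}$ fixes $\uq^0$ pointwise, so the action of $K_h\in\mathcal A$ on weight vectors is untouched, while the scaling factors $\phi^{L(\lambda)}_{\boldsymbol{a}}(\mu)$ and $\phi^{L(\lambda)}_{\boldsymbol{a}}(\mu)^{-1}$ appearing in the descriptions of $\Phi_{\boldsymbol{a}}(v)$ and $\Phi^\ast_{\boldsymbol{a}}(f)$ in Lemma \ref{lem:tech4}(i),(ii) cancel weight-by-weight. Explicitly, writing $v=\sum_\mu v_\mu$ and $f=\sum_\mu f_\mu$ in weight components, one obtains $\tilde\varphi(K_h)=\sum_\mu q^{\langle h,\mu\rangle}f_\mu(v_\mu)=\varphi(K_h)$ for $h\in Y_\Theta$. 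The only genuine obstacle in executing this plan is the bookkeeping around the definition of $\uqd^\chi$ for non-uniform $\boldsymbol{d}$ and the compatibility of the akin-character relation with $\Phi_{\boldsymbol{a}}$; once these identifications are made, the theorem reduces directly to Theorem \ref{thm:weylgrpinv1}.
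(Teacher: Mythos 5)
Your proposal is correct and is essentially the argument in the paper: transport through $\Phi_{\boldsymbol{a}}$ to the uniform balanced case, apply Theorem \ref{thm:weylgrpinv1}, and note that $\Res(\tilde\varphi)=\Res(\varphi)$ because the weight-by-weight scaling factors $\phi^{L(\lambda)}_{\boldsymbol{a}}(\mu)^{\pm1}$ from Lemma \ref{lem:tech4}(i),(ii) cancel while $\Phi_{\boldsymbol{a}}$ fixes $\uq^0$ and distinct weight spaces pair trivially under the Shapovalov identification. The only deviation is cosmetic (the direction in which you run $\Phi_{\boldsymbol{a}}$), and you helpfully spell out the compatibility of the akin-character relation with $\Phi_{\boldsymbol{a}}$, a point the paper's proof leaves implicit.
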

\begin{proof}
	Let $\boldsymbol{c}$ be an admissible parameter and set for $i\in\I_\circ$ the scalar $a_i:=c_id_i^{-1}$ and $a_i=1$ for $i\in \I_\bullet$. Then $\Phi_{\boldsymbol{a}}$ restricts to an algebra isomorphism $\uqd\to \uqb$ and $\uqd^\chi\to \uqb^{\chi\circ \Phi_{\boldsymbol{a}}^{-1}}$. Let $c^{L(\lambda)}_{f,v}\in{^{\uqd^\chi} \qfa^{\uqd}_\chi}$ be a spherical function. Lemma \ref{lem:tech4} implies that $c^{L(\lambda)}_{\Phi^\ast_{\boldsymbol{a}}(f),\Phi_{\boldsymbol{a}}(v)}$ is a $\chi\circ \Phi_{\boldsymbol{a}}^{-1}$-spherical function for $(\uqb^{\chi\circ \Phi_{\boldsymbol{a}}^{-1}},\uqb)$ in $^{\uqb^{\chi\circ \Phi_{\boldsymbol{a}}^{-1}}}\qfa^{\uqb}_{\chi\circ \Phi_{\boldsymbol{a}}^{-1}}$. 
	According to Theorem \ref{thm:weylgrpinv1} and Lemma \ref{lem:transformation}, we have
	$$\Res(c^{L(\lambda)}_{f,v})\stackrel{Lem\, \ref{lem:transformation}}{=}\Res\bigg(c^{L(\lambda)}_{\Phi^\ast_{\boldsymbol{a}}(f),\Phi_{\boldsymbol{a}}(v)}\bigg)\stackrel{Thm \,\ref{thm:weylgrpinv1}}{\in} \qfaaws .\qedhere$$
\end{proof}
\begin{rem}
	Weyl group invariance for spherical functions for quantum symmetric pairs over $\Q(q)$, or a sub-field of $\F$, also follows by Theorem \ref{thm:mainthm}. Given a spherical function over $\Q(q)$, one may extend the spherical function to a spherical function for the quantum symmetric pair over $\F$. Here, Weyl group invariance follows by Theorem \ref{thm:mainthm}. Afterwards, one may restrict the spherical functions back to the $\Q(q)$-form of the quantum group. In this way we obtain Weyl group invariance for spherical functions for quantum symmetric pairs with balanced parameters over $\Q(q)$. 
\end{rem}
\subsection{A comparison with Letzter's invariance}
In this section we compare Theorem \ref{thm:mainthm} to Gail Letzter's Weyl group invariance \cite[\mbox{Thm 5.3}]{Letzter2003}. 
For the trivial character, Gail Letzter shows that for each coideal subalgebra $\uqb$ there exists a coideal subalgebra $\uqd$ such that the $(\uqd,\uqb)$ zonal spherical functions are Weyl group invariant. In our notation $\uqd$ can be expressed as $\uqd=\mathrm{ad}(K_\rho)(\uqb)$. Let $\Res(\uq^0)$ denote the restriction map $\Res(\uq^0): \qfa\to \F[X]$ , cf. \cite[\mbox{Sec 4}]{Letzter2003}. Note that the relative Weyl group $W^\Sigma$ naturally acts on $\F[X]$.
\begin{thm}\cite[\mbox{Thm 5.3}]{Letzter2003}.
	Restriction to the quantum torus defines a $\F$-linear map
	$\mathrm{Res}(\uq^0): {^{\mathrm{ad}(K_\rho)(\uqb)}_{\qquad\quad\,\,\,\epsilon}\qfa^{\uqb}_\epsilon} \to \F[X]^{W^\Sigma}$
\end{thm}
The goal of this section is to understand the relation between $\mathrm{ad}(K_\rho)(\uqb)$ and $\uqb^\chi$. To start, we specialize Theorem \ref{thm:mainthm} to the trivial character.
\begin{cor}\label{cor:Letzter}
	Let $\boldsymbol{c}$ be a balanced parameter. Then restriction to the quantum torus defines a $\F$-linear map
	$\mathrm{Res}(\uq^0): {^{\uqb^\epsilon}_{\epsilon}\qfa^{\uqb}_\epsilon} \to \F[X]^{W^\Sigma}.$
\end{cor}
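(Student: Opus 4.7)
The plan is to derive the corollary directly from Theorem \ref{thm:mainthm} by isolating the summand indexed by the trivial character $\chi=\epsilon\in\widehat{\uqd}$. First I would note that $\epsilon$ occurs in the finite-dimensional weight module $L(0)$, so the coideal $\uqd^\epsilon$ is well-defined through the recipe of Proposition \ref{prop:finaly}, and the hypothesis that $\boldsymbol{d}$ is balanced is precisely what Theorem \ref{thm:mainthm} requires.

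Next I would verify the akin-ness built into the notation ${^{\uqd^\epsilon}_{\epsilon}\qfa^{\uqd}_\epsilon}$, that is, that $\epsilon\in\widehat{\uqd^\epsilon}$ is akin to $\epsilon\in\widehat{\uqd}$ in the sense of Definition \ref{enum:eq}. Condition (i) is immediate, since the two counits agree on the common subalgebra $\uq_\bullet\uq_\Theta^0$. For condition (ii), I would specialize the parameter formula \eqref{eq:parametershift} at $\chi=\epsilon$: since $\epsilon(B_i)=0$ for each $i\in\I_\circ$, the scalar $t_i$ vanishes, so $\uqd^\epsilon$ is standard, its generators take the form $B_i^\epsilon=F_i+d_i T''_{w_\bullet,+1}(E_{\tau(i)})K_i^{-1}$, and each such generator lies in the augmentation ideal of $\uq$, so $\epsilon(B_i^\epsilon)=0$.

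With akin-ness established, the space ${^{\uqd^\epsilon}_{\epsilon}\qfa^{\uqd}_\epsilon}$ sits as the $\chi=\epsilon$ direct summand of the domain of Theorem \ref{thm:mainthm}, and the restriction of its elements therefore lies in $\qfaaws$. I anticipate no serious obstacle, since the substantive work---the rank-one Weyl group invariance of Proposition \ref{prop:rang1}, the Shapovalov-pairing construction of $\uqd^\epsilon$ in Proposition \ref{prop:finaly}, and the parameter deformation via $\Phi_{\boldsymbol{a}}$ that removes the uniform hypothesis---has already been carried out in proving the general theorem. This specialization is the zonal case and, after the identification $\uqd^\epsilon=\mathrm{ad}(K_\rho)(\uqd)$ mentioned immediately after the corollary, recovers Letzter's invariance.
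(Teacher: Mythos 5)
Your proof follows the same route as the paper's, which simply observes that $\epsilon|_{\uqd^\epsilon}$ is akin to $\epsilon|_{\uqd}$ and invokes Theorem~\ref{thm:mainthm}; you helpfully spell out the akin-ness verification (the vanishing of $t_i$ from~\eqref{eq:parametershift} and the fact that $\epsilon$ kills the standard generators $B_i^\epsilon$) that the paper leaves implicit. The argument is correct and complete.
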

\begin{proof}
Let $h\in Y$ with $h=h_1+h_2$, where $\Theta(h_1)=h_1$ and $\Theta(h_2)=-h_2$. Let $\varphi \in {^{\uqb^\epsilon}_{\epsilon}\qfa^{\uqb}_\epsilon}$, then \begin{equation}\label{eq:trans}
\varphi(K_{h_1+h_2})=\epsilon(K_{h_2})\varphi(K_{h_2})=\varphi(K_{h_2}).
\end{equation}
The action of $W^\Sigma$ on $\mathcal A$ and $\uq_\Theta^0$ preserves these spaces, respectively. Using (\ref{eq:trans}) we see that to have Weyl group invariance, it suffices to be Weyl group invariant on the quantum torus. As $\epsilon|_{\uqb^\epsilon}$ is akin to $\epsilon|_{\uqb}$ Weyl group invariance on the quantum torus follows by Theorem \ref{thm:mainthm}.
\end{proof}
\begin{rem}
	For zonal spherical functions, the assumption that the parameter is balanced may be relaxed to more general non-standard, non-balanced parameters that are as described in Remark \ref{rem:asumpar} using Theorem \ref{thm:weylinvrank1}.
\end{rem}
Using \cite[\mbox{Lem 3.10}]{Bao2018}, one deduces the values of the parameter given in Proposition \ref{prop:finaly}. The parameter in the last column is the parameter shift as given in  \cite[\mbox{Thm 5.3}]{Letzter2003}. For $i\in \I$ recall the positive integers $d_i$, introduced in the beginning of Section \ref{sec:quantumgrouips}.
\begin{center}
	\captionof{table}{Values of parameters for quantum symmetric pairs of rank one}\label{table}
	\begin{tabular}{|c|c|c|c|c|}
		\hline
		\textbf{Type}	& $d_i\langle\alpha_i^\vee,\alpha_i\rangle$  & ${\langle 2
			\rho_{\bullet}^\vee,\alpha_i\rangle } $ &$-d_i\langle 2\rho_\bullet,\alpha_i^\vee\rangle$&$ (\rho,\alpha_i-\Theta(\alpha_i))$   \\
		\hline
		$\mathsf{AI_1}$&2  &0  & 0 & 2   \\
		\hline
		$\mathsf{AII_3}$	&2  &-2  & 2 &4    \\
		\hline
		$\mathsf{AIII_{11}}$		&2  &0  & 0 & 2   \\
		\hline
		$\mathsf{AIV}$, $n\geq 2$	&$2$  &$2-n$  &$n-2$  & $n$    \\
		\hline
		$\mathsf{BII}$, $n\geq 2$	&4  &$-2n+2$  & $2(2n-3)$  & $2(2n-1)$    \\
		\hline
		$\mathsf{CII}$, $n\geq 3$	&$2$  &  $-2n+4$&$2n-3$  & $2n-1$    \\
		\hline
		$\mathsf{DII}$, $n\geq 4$	&$2$  & $-2n+4$ &$2n-4$  & $2n-2$   \\
		\hline
		$\mathsf{FII}$	&$2$  & $-6$ & $9$& $11$   \\
		\hline
	\end{tabular}
\end{center}
Using Table \ref{table} we observe that the parameters
$$q^{(\rho,\alpha_i-\Theta(\alpha_i))}c_i\quad\text{and}\quad  (-1)^{\langle 2\rho_{\bullet}^\vee,\alpha_i\rangle}q_i^{\langle\alpha_i^\vee,\alpha_i\rangle}
q_i^{-\langle \alpha_i^\vee,2\rho_\bullet\rangle}c_i,\qquad\text{for}\qquad i\in\I_\circ$$
coincide up to the scalar $(-1)^{\langle 2\rho_{\bullet}^\vee,\alpha_i\rangle}$. This scalar is not equal to one in the case of $\mathsf{AIV}$ with $n\geq 2$ odd. In the following proposition we show how to gauge the parameter.
\begin{prop}
	Let $(\uq,\uqb)$ be a quantum symmetric pair, where the Satake diagram is of rank one and of type $\mathsf{AIV}$ with $n\geq 2$ odd. If the $(\mathrm{ad}(K_{\rho})(\uqb),\uqbmin)$\\ zonal spherical functions are Weyl group invariant, then the $(\mathrm{ad}(K_{\rho})(\uqb),\uqb)$ zonal spherical functions are Weyl group invariant.
\end{prop}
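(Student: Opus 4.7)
The plan is to gauge the parameter by means of the Hopf-algebra automorphism $\Phi_{\boldsymbol{a}}:\uq\to\uq$ from (\ref{eq:iso}), taken with $a_i=-1$ for $i\in\I_\circ=\{1,n\}$ and $a_j=1$ for $j\in\I_\bullet$. A direct check shows $\Phi_{\boldsymbol{a}}(\uqb)=\uqbmin$, whereas $\Phi_{\boldsymbol{a}}$ fixes every $K_h$ and therefore acts as the identity on the quantum torus $\mathcal A$. For any zonal spherical function $\varphi=c^{L(\lambda)}_{f,v}$ for $(\mathrm{ad}(K_\rho)(\uqb),\uqb)$, Lemma~\ref{lem:tech4} tells us that $\Phi_{\boldsymbol{a}}(v)\in L(\lambda)$ is an $\epsilon$-spherical vector for $\uqbmin$, so that $\psi:=c^{L(\lambda)}_{f,\Phi_{\boldsymbol{a}}(v)}$ is a zonal spherical function for $(\mathrm{ad}(K_\rho)(\uqb),\uqbmin)$ whose restriction $\psi|_{\mathcal A}$ is Weyl-invariant by hypothesis. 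The task then reduces to transferring this invariance from $\psi|_{\mathcal A}$ to $\varphi|_{\mathcal A}$.

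Expanding both matrix coefficients in weight components and using Shapovalov-orthogonality of distinct weight spaces, one finds, for $h\in Y_\Theta$,
\[
\psi(K_h)=\sum_{\mu}\phi^{L(\lambda)}_{\boldsymbol{a}}(\mu)\,q^{\langle h,\mu\rangle}(f_\mu,v_\mu),\qquad \varphi(K_h)=\sum_{\mu}q^{\langle h,\mu\rangle}(f_\mu,v_\mu).
\]
By Lemma~\ref{lem:uqbullet} the relevant character restricts to $\epsilon$ on $\uq_\bullet$, so $f$ and $v$ live in the $\uq_\bullet$-invariant subspaces and only weights $\mu$ with $\langle\alpha_j^\vee,\mu\rangle=0$ for all $j\in\I_\bullet$ can contribute. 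Writing $\mu=\lambda-\sum_{i\in\I}t_i\alpha_i$, this invariance amounts in type $\mathsf{AIV}$ to the recurrence $t_{j-1}-2t_j+t_{j+1}=0$ on $\I_\bullet$, whose solutions are parametrized by $t_j=t_1+(j-1)d$ for two integer parameters $(t_1,d)$; the phase $\phi^{L(\lambda)}_{\boldsymbol{a}}(\mu)$ then depends only on the exponent $t_1+t_n=2t_1+(n-1)d$.

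The crucial observation, and the place where the hypothesis \emph{$n$ odd} enters, is that $(n-1)$ is then even, so $2t_1+(n-1)d$ is always even and $\phi^{L(\lambda)}_{\boldsymbol{a}}(\mu)\in\{\pm 1\}$. A direct computation with $\boldsymbol{r}_1=w_0w_\bullet^{-1}$, together with the fact that $w_\bullet$ fixes every $\uq_\bullet$-invariant weight, yields $\phi^{L(\lambda)}_{\boldsymbol{a}}(\boldsymbol{r}_1\mu)=\phi^{L(\lambda)}_{\boldsymbol{a}}(\mu)$ on the support; and an analogous check, based on the identification $Y_\Theta^\perp=X^\Theta$ and the shape $r_n(\alpha_n-\alpha_1)+\sum_{j\in\I_\bullet}r_j\alpha_j$ of $\Theta$-invariant root-lattice elements in $\mathsf{AIV}$, shows that $\phi^{L(\lambda)}_{\boldsymbol{a}}$ depends only on $\mu|_{Y_\Theta}$ among $\uq_\bullet$-invariant weights. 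Grouping the series for $\psi|_{\mathcal A}$ and $\varphi|_{\mathcal A}$ by the restricted weight $\nu=\mu|_{Y_\Theta}$ then produces coefficients that differ only by a common nonzero phase $\phi_{\boldsymbol{a}}(\nu)$, which moreover agrees at $\nu$ and at $\boldsymbol{r}_1\nu$; by linear independence of the characters of $\mathcal A$, the $\boldsymbol{r}_1$-invariance of $\psi|_{\mathcal A}$ transfers verbatim to $\varphi|_{\mathcal A}$. The main obstacle is precisely this parity computation: if $n$ were even then $2t_1+(n-1)d$ could be odd, $\phi^{L(\lambda)}_{\boldsymbol{a}}(\mu)$ would become a genuine fourth root of unity, and the identity $\phi^{L(\lambda)}_{\boldsymbol{a}}(\boldsymbol{r}_1\mu)=\phi^{L(\lambda)}_{\boldsymbol{a}}(\mu)$ would fail, so this gauging argument is intrinsically restricted to type $\mathsf{AIV}$ with $n$ odd.
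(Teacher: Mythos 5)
Your proof follows the same basic strategy as the paper's: gauge by the Hopf automorphism $\Phi_{\boldsymbol{a}}$ with $a_1=a_n=-1$, so that $\Phi_{\boldsymbol{a}}(\uqb)=\uqbmin$ while $\Phi_{\boldsymbol{a}}$ fixes the quantum torus, then track the phase $\phi_{\boldsymbol{a}}(\mu)$ on the contributing weights and transfer Weyl-invariance from $\psi|_{\mathcal A}$ to $\varphi|_{\mathcal A}$. (The paper writes $a_1=a_n=(-1)^{1/2}$, but its own computation $\Phi_{\boldsymbol{a}}(E_J)=(-1)^k E_J$ matches $a_1=a_n=-1$, so your reading is the right one.)

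Where you diverge is the final transfer. The paper writes $v=\sum_J b_J E_J v_{w_0\lambda}$ with $E_J\in\uq^+_{k(\alpha_1+w_\bullet\alpha_n)}$, so that $c_{f,v}$ restricted to the singly-generated torus takes the form $\sum_k a_k(q^{2Nk}+q^{-2Nk})$; each summand is individually $\boldsymbol{r}_1$-invariant, and scaling term by term by $(-1)^k$ manifestly preserves invariance. You instead group the series by restricted weight $\nu$, show that $\phi_{\boldsymbol{a}}$ factors through $\nu$ and is $\boldsymbol{r}_1$-invariant, and then invoke linear independence of the characters of $\mathcal A$. Both routes are valid; yours is organized so that it would transport to higher rank more readily, the paper's is slightly slicker in rank one.

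One caveat about your diagnosis of the role of $n$ odd. You constrain the support only by the $\uq_\bullet$-condition $\langle\alpha_j^\vee,\mu\rangle=0$, $j\in\I_\bullet$, giving the two-parameter family $t_j=t_1+(j-1)d$, and then you need $n$ odd to ensure $2t_1+(n-1)d$ is even. But Lemma~\ref{lem:uqbullet} also gives $\chi(K_h)=q^{\langle h,\lambda\rangle}$ on all of $\uq_\Theta^0$; applying this with $h=\alpha_1^\vee-\alpha_n^\vee$ and $\lambda=m(\omega_1+\omega_n)$ yields $(n+1)d=0$, hence $d=0$. So the support lies on the single line $\mu=\lambda-k(\alpha_1+\cdots+\alpha_n)$, $t_1+t_n=2k$ is automatically even, and $\phi_{\boldsymbol{a}}(\mu)=(-1)^k$ is a genuine sign for \emph{every} $n$. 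The actual reason the proposition is stated only for $n$ odd is that for $n$ even one has $(-1)^{\langle 2\rho_\bullet^\vee,\alpha_i\rangle}=1$, so $\uqb^\epsilon=\mathrm{ad}(K_\rho)(\uqb)$ and the conclusion is a direct instance of Corollary~\ref{cor:Letzter}; no gauging is required. Your proof is correct as written, but what you identify as the ``crucial obstacle'' is not actually an obstacle once the full $\uq_\Theta^0$-constraint is used, and invoking it would both sharpen the argument and dispense with the two-parameter weight analysis.
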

\begin{proof}
	Let $c_{f,v}\in L^r(\lambda)\otimes L(\lambda)$ be a zonal spherical function and let $\boldsymbol{a}\in (\F^\times)^\I$ be such that $a_j=1$ for $j\in\I_\bullet$ and $a_1=(-1)^{1/2}=a_n$. By Lemma \ref{lem:tech4},  $\Phi_{\boldsymbol{a}}(v)$ is a zonal spherical vector for $\uqb$. We may write $v=\sum_{J\in \mathcal I}b_jE_Jv_{-w_0\lambda}$, where for each $J\in\mathcal I$ we have $E_J\in \uq_{n (\alpha_1+w_\bullet a_n)}^+$ for some $n\in \N$ and $b_j\in \F$. 
	Let $E_J\in \uq_{n (\alpha_1+w_\bullet a_n)}^+$. Then we have
	\begin{align}\label{eq:properties}K_{i}K_{w_\bullet \tau(i)}E_J=q^{2n}E_JK_{i}K_{w_\bullet \tau(i)}\qquad\text{and}\qquad \Phi_{\boldsymbol{a}}(E_J)=(-1)^{n}E_J.\end{align}
	By \cite[\mbox{Sec 4.4}]{watanabe2022stability}, there exists a $m\geq0$ such that $\lambda=m(\omega_1+\omega_n)$, and the quantum torus is generated by $K_{i}K_{w_\bullet \tau(i)}$. Hence, as a result of Theorem \ref{thm:mainthm}, there exist scalars $a_k\in \F$ such that
	$
	c_{f,v}((K_{i}K_{w_\bullet \tau(i)})^n)=\sum_{k=0}^m a_k(q^{2nk}+q^{-2nk}).
	$
	Using (\ref{eq:properties}), we obtain
	$
	c_{f,{\Phi}_{\boldsymbol{a}}(v)}((K_{i}K_{w_\bullet \tau(i)})^n)=\sum_{k=0}^m a_k(-1)^{k-m}(q^{2nk}+q^{-2nk}).
	$
	Because $K_{i}K_{w_\bullet \tau(i)}$ generates the quantum torus, this means that $c_{f,{\Phi}_{\boldsymbol{a}}(v)}\in\qfaaws$.
\end{proof}
\begin{rem}
	Assume that $c_i=-q_i^{-1}$ if $i\in \I_{\mathrm{ns}}$.
	Using Table \ref{table} and the \textit{shift of basepoint}, cf. \cite[\mbox{Eq (10.1)}]{Kolb2023}, defined by
	$$\rho_\chi: \uqb\to \rho_\chi(\uqb),\qquad b\mapsto \chi(b_{(1)})b_{(2)},\quad \text{for}\quad b\in \uqb,$$
	the coideal subalgebra $\uqb^\chi$ can be described as
	$\uqb^\chi=\Phi_{\boldsymbol{s}}\big(\mathrm{ad}(K_\rho)(\rho_\chi(\uqb))\big)$.
\end{rem}
\subsection{Examples}
This section contains explicit examples of spherical functions of type $\chi$.
The following is a special case of the representation theory studied in \cite{Aldenhoven2017} and \cite{Watanabe2018}.
\begin{ex}[Spherical functions for non-trivial characters]
	Let $\mathbf{V}$ be the vector representation of $\mathbf U_q(\mathfrak{sl}_3)$, with standard basis $\{v_1,v_2,v_3\}$. Let $\tau:\{1,2\}\to \{1,2\}$ be the diagram automorphism with $\tau(1)=2$ and $\tau(2)=1$. Then $(\{1,2\},\emptyset,\tau)$ is a Satake diagram. 
	Let $\boldsymbol{c}$ be a parameter and let $\uqb$ to be the corresponding coideal subalgebra. The vector $v= v_1-c_1^{-1}v_3\in \mathbf{V}$ spans a one-dimensional $\uqb$ submodule that is not isomorphic to the trivial module and $v^r=v_1-q^{-1} c_2v_3$ spans a one-dimensional submodule for $\uqb$ in $^\varrho \mathbf{V}$. With respect to the Shapovalov form the spherical function $c_{v^r,v}$ takes the form
	$$c_{v^r,v}(K_{n\rho})=q^{-1}c_1^{-1}c_2q^{-n}+q^n\qquad \text{for}\qquad n\in \Z.$$
	It holds that $q^{(\rho,\alpha_{\tau(i)}+\alpha_i)}\chi(K_1^{-1}K_2)=q^{2}q^{-1}=q.$
	Thus, by definition of $\uqb^\chi$ the spherical function $c_{\overline{v},v}$ for $(\uqb^\chi,\uqb)$ takes the form
	$$c_{v^r,v}(K_{n\rho})=c_{\overline{v},v}(K_{n\rho})=q^{-n}+q^n\qquad \text{for}\qquad n\in \Z,$$
	which one observes to be relative Weyl group invariant, cf. Theorem \ref{thm:mainthm}. 
\end{ex}
\begin{ex}
	Let $L(\omega_2)=\bigwedge_q^2 L(\omega_1)$ be the highest weight module of weight $\omega_2$ of $\mathbf U_q(\mathfrak{sl}_4)$, see \cite[\mbox{4.3.12}]{Stok98} for the definition of $\bigwedge_q^2 L(\omega_1)$. Consider the rank $2$ Satake diagram of type $\mathsf{AIII_{3}}$.  Let $\boldsymbol{c}$ be the uniform balanced parameter defined by $c_1=1$ and $c_2=q^{-1}$. Let $\uqb$ be the corresponding quantum symmetric pair coideal subalgebra. 
	The vector representation $\mathbf{V}\cong L(\omega_1)$ has a standard basis $\{v_1,v_2,v_3,v_4\}$. The vector space $\bigwedge_q^2 L(\omega_1)$ has a basis $\{v_i\wedge v_j\,:\, 1\leq i<j\leq 4\}$.
	The vector $v= v_1\wedge v_2-v_1\wedge v_3+q^{-1}v_2\wedge v_4-q^{-1}v_3\wedge v_4\in L(\omega_2)$ spans a one-dimensional $\uqb$ submodule that is not isomorphic to the trivial module. Let $\chi\in \widehat{\uqb}$ be the corresponding character. The non-standard quantum symmetric pair coideal subalgebra $\uqb^\chi$ is generated by $\uq_{\Theta}^0$ and
	$$B_1=F_1+q^{2}E_3K_1^{-1},\quad B_3=F_3+q^{2}E_1K_3^{-1}\quad\text{and}\quad B_2=F_2+qE_2K_2^{-1}-qK_2^{-1}.$$
	One may check that the vector $\overline{v}\in L^r(\omega_2)$ spans a one-dimensional $\uqb^\chi$ module akin to $\chi$. The spherical function $c_{\overline{v},v}$ takes the following form on the quantum torus:
	$$c_{\overline{v},v}(K_{n\alpha_2+m(\alpha_1+\alpha_3})=q^n+q^{2m-n}+q^{n-2m}+q^{-n},\qquad \text{for}\qquad n,m\in \Z.$$
	We have $\boldsymbol{r_1}=s_1s_3$ and $\boldsymbol{r_2}=s_2$, and their actions on $\alpha_2$ and $\alpha_1+\alpha_3$ are given by
	$$\boldsymbol{r_2}(\alpha_1+\alpha_3)=\alpha_1+\alpha_3+2\alpha_2\qquad\text{and}\qquad \boldsymbol{r_1}(\alpha_2)=\alpha_1+\alpha_2+\alpha_3.$$
	Therefore, for all $n,m\in\N$ we have
	\begin{align*}
		c_{\overline{v},v}(\boldsymbol{r_1}K_{n\alpha_2+m(\alpha_1+\alpha_3)})&=c_{\overline{v},v}(K_{n\alpha_2+(n-m)(\alpha_1+\alpha_3)})\\
		&=q^n+q^{-n}+q^{2(n-m)-n}+q^{-(2(n-m)-n)}\\
		&=q^n+q^{2m-n}+q^{n-2m}+q^{-n}\\
		&=c_{\overline{v},v}(K_{n\alpha_2+m(\alpha_1+\alpha_3)}).
	\end{align*}
	and
	\begin{align*}
		c_{\overline{v},v}(\boldsymbol{r_2}K_{n\alpha_2+m(\alpha_1+\alpha_3)})&=c_{\overline{v},v}(K_{(2m-n)\alpha_2+m(\alpha_1+\alpha_3)})\\
		&=q^{2m-n}+q^{n-2m}+q^{2m-(2m-n)}+q^{n-2m}+q^{-(2m-(2m-n))}\\
		&=q^n+q^{2m-n}+q^{n-2m}+q^{-n}\\
		&=c_{\overline{v},v}(K_{n\alpha_2+m(\alpha_1+\alpha_3)}).
	\end{align*}
	Hence, we observe Weyl group invariance of $c_{\overline{v},v}$
\end{ex}
\begin{ex}(Bar invariance of zonal spherical functions)
	Let $(\uq,\uqb)$ be a quasi-split quantum symmetric pair, meaning that $\I_\bullet=\emptyset$. Furthermore, suppose that $\tau_0=\tau$ and that $\boldsymbol{c}$ is a uniform balanced parameter. Because $(\uq,\uqb)$ is quasi-split, \cite[\mbox{Prop 4.1.3}]{Watanabe2023} tells us that the spherical weights are $\tau$ invariant. Hence, if $\varphi\in L(\lambda)^\ast\otimes L(\lambda)$ is a zonal spherical function, then $\varphi\circ\tau$ is a matrix entry in $L(\lambda)^\ast \otimes L(\lambda)$. Using that $\boldsymbol{c}$ is balanced one shows that $\varphi\circ\tau$ is a spherical function. By uniqueness of zonal spherical functions in $L(\lambda)^\ast \otimes L(\lambda)$, see Theorem \ref{thm:unique2}, there exists $\eta\in\F$ such that $\varphi\circ \tau=\eta\varphi$. We obtain $\eta=1$, by evaluating at $1\in \uq$. Proposition \ref{prop:invbar} implies that
	$$\varphi(K_h)=\overline{\varphi(K_{\tau_0(h)})}=\overline{\varphi(K_{\tau(h)})}=\overline{\varphi(K_{h})}\qquad\qquad h\in Y,$$
	which shows that zonal spherical functions in this setting are bar invariant on $\uq^0$.
\end{ex}
\begin{appendices}
	\section{Proof of Proposition \ref{prop:charinv}}\label{Appendix}
	\addtocontents{toc}{\protect\setcounter{tocdepth}{1}}
	In this appendix we will conclude the proof of Proposition \ref{prop:charinv}. We present a constructive proof that for each character $\chi\in \widehat{\uqdis}$ that occurs in finite dimensional $\uq$ weight modules the equality $\chi\circ \boldsymbol{T}_{i,-1}'(B_j)=\chi(B_j)$ holds. We work over the Drinfel'd double $\widetilde{\uq}$, see \cite[\mbox{Section 2.1}]{Wang2023}, because there the actions are explicitly described in \cite[\mbox{Table 3}]{Wang2023}. We use the notation $\mathcal K_i=K_iK_{w_\bullet \tau(i)}'$, for $i\in \I_\circ$. 
	\begin{lem}\label{lem:aprang1}
		For each $i\in\I_\circ$ it holds that $\chi\circ\boldsymbol{T}_{i,-1}'(B_i)=\chi(B_i)$.
	\end{lem}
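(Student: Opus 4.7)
My plan is to exploit the fact that $\boldsymbol{T}_{i,-1}'(B_i)$ depends only on the rank one Satake subdiagram induced by $i$, and then verify the equality type by type, consistent with the appendix's opening remark that the relevant actions are explicitly tabulated in \cite[Table 3]{Wang2023}.

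First, I would observe that $B_i \in \uqdis^i$ and that on this element the Wang-Zhang operator is expressed through the rank one quasi-$K$ matrix $\Upsilon_{i,\boldsymbol{c}}$ and the rescaled Lusztig operator $\mathcal T'_{\boldsymbol{r_i},-1;\boldsymbol{c}}$ by Theorem \ref{thm:braid}(ii). Both objects are intrinsic to the rank one subdiagram $\{i,\tau(i)\}\cup \I_\bullet$, so $\boldsymbol{T}_{i,-1}'(B_i) \in \widetilde{\uq}_i$ and the identity to be shown is a purely rank one statement.

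Next, I would reduce the data on $\chi$ to the minimum needed. By Lemma \ref{lem:uqbullet}, $\chi|_{\uq_\bullet} = \epsilon$ and $\chi$ acts on $\uq_\Theta^0$ by $q^{\langle h,\lambda\rangle}$ for the highest weight $\lambda$ of a module in which it occurs; moreover, as noted in the proof of Proposition \ref{prop:charinv}, if $i \notin \I_{\mathrm{ns}}$ then $\chi(B_i) = 0$ and only the cases $i \in \I_{\mathrm{ns}}$ genuinely need to be handled. Once $\chi$ is applied, the formulas from \cite[Table 3]{Wang2023} collapse considerably: $\uq_\bullet$ factors push to scalars through $\epsilon$, and each $\mathcal K_i$-type factor contributes a $q$-scalar fixed by $\lambda$. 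I would then run through the rank one Satake types listed in Table \ref{table}, namely $\mathsf{AI_1}$, $\mathsf{AII_3}$, $\mathsf{AIII_{11}}$, $\mathsf{AIV}$, $\mathsf{BII}$, $\mathsf{CII}$, $\mathsf{DII}$, $\mathsf{FII}$, read off $\boldsymbol{T}_{i,-1}'(B_i)$ from Wang-Zhang's tables, apply $\chi$ to both sides, and use the distinguished value $c_i^\diamondsuit = -q^{(\alpha_i,\alpha_i-\Theta(\alpha_i))/2}$ from (\ref{eq:distinguished}) to check that the resulting scalar equals $\chi(B_i)$.

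The main obstacle will be the non-split types with nontrivial $\I_\bullet$, namely $\mathsf{AII_3}$, $\mathsf{BII}$, $\mathsf{CII}$, $\mathsf{DII}$ and $\mathsf{FII}$, where the expression for $\boldsymbol{T}_{i,-1}'(B_i)$ involves products of $T''_{w_\bullet,+1}$-type operators applied to $E_{\tau(i)}$ together with factors from $\uq_\bullet$. For these, the bookkeeping of the sign $(-1)^{\langle 2\rho_\bullet^\vee,\alpha_i\rangle}$ and the power of $q_i$ recorded in Table \ref{table}, combined with the identity for $c_i^\diamondsuit$, must be arranged so that the various scalars cancel and leave exactly $\chi(B_i)$. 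The split cases $\mathsf{AI_1}$ and $\mathsf{AIII_{11}}$, where $\I_\bullet = \emptyset$, should reduce to short direct computations with the explicit rescaled Lusztig formulas.
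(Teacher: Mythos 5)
Your overall plan is correct, and the key observations you make (rank one reduction, $\chi|_{\uq_\bullet}=\epsilon$, $\chi(B_i)=0$ for $i\notin\I_{\mathrm{ns}}$) are exactly the ones the paper uses. However, you stop short of carrying the $\I_{\mathrm{ns}}$-reduction to its logical end, which causes the rest of your plan to propose far more work than is necessary, and to worry about a ``main obstacle'' that in fact does not arise.

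Concretely: you note, correctly, that only $i\in\I_{\mathrm{ns}}$ genuinely needs to be handled, since for $i\notin\I_{\mathrm{ns}}$ both $\chi(B_i)$ and $\chi\circ\boldsymbol{T}'_{i,-1}(B_i)$ vanish by \cite[Prop 10.1]{Kolb2023} applied to the two spherical vectors $v$ and $(\boldsymbol{T}'_{i,-1})^{-1}v$. But then recall the definition of $\I_{\mathrm{ns}}$: it requires $\tau(i)=i$ and $c_{ij}=0$ for all $j\in\I_\bullet$. This means the rank one Satake subdiagram induced by $i$ is a single white node with no adjacent black nodes, i.e.\ it is of type $\mathsf{AI_1}$, and nothing else. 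So the case-by-case analysis you propose over $\mathsf{AI_1},\mathsf{AII_3},\mathsf{AIII_{11}},\mathsf{AIV},\mathsf{BII},\mathsf{CII},\mathsf{DII},\mathsf{FII}$ collapses to a single case: all types other than $\mathsf{AI_1}$ are already disposed of by the $\I_{\mathrm{ns}}$-reduction. In particular your paragraph on ``the main obstacle'' (the non-split types with $\I_\bullet\neq\emptyset$) describes work that is not needed, which is a sign the reduction wasn't fully internalised.

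Two smaller points. First, the reference you would consult for the rank one computation is not \cite[Table 3]{Wang2023} (which the appendix uses for rank two diagrams when computing $\boldsymbol{T}'_{i,-1}(B_j)$ with $j\neq i$); for $\boldsymbol{T}'_{i,-1}(B_i)$ in rank one the relevant statement is \cite[Thm 4.14]{Wang2023}. Second, in type $\mathsf{AI_1}$ the paper obtains something stronger than the scalar identity you aim for: after the central reduction ($\mathcal K_i^{-1}\mapsto -q_i^2$ for the distinguished parameter) one finds $\boldsymbol{T}'_{i,-1}(B_i)=B_i$ on the nose, so $\chi\circ\boldsymbol{T}'_{i,-1}(B_i)=\chi(B_i)$ follows without any bookkeeping of $\chi$-values, signs, or $q$-powers. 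Your plan would eventually reach the same answer, but the cancellation you anticipate having to arrange by hand already happens at the level of the algebra element itself.
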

	\begin{proof}
		By Lemma \ref{lem:tech3}, Lemma \ref{lem:unique} and Proposition \cite[\mbox{Prop 10.1}]{Kolb2023} it remains to check the statement in the case that the rank one Satake diagrams induced by $i\in \I_\circ$ is of type $\mathsf{AI}$. We deduce from \cite[\mbox{Thm 4.14}]{Wang2023} that 
		$$\boldsymbol{\widetilde{T}}'_{i,-1}(B_i)=-q_i^{-2}B_i\mathcal K_i^{-1}.$$
		As $\mathcal K_i^{-1}=K_i'K_i$ is mapped to $-q_i^{2}$ under the central reduction, see \cite[\mbox{Table 1}]{Wang2023}, it follows that 
		$\boldsymbol{T}'_{i,-1}(B_i)=B_i.$ In particular we have $\chi\circ \boldsymbol{T}'_{i,-1}(B_i)=\chi(B_i.)$
	\end{proof}
	By Lemma \ref{lem:aprang1}, it suffices to check that 
	\begin{equation}\label{eq:chisym}
		\chi\circ \boldsymbol{T}_{i,-1}'(B_j)=\chi(B_j)
	\end{equation}
	where $i\neq j\in \I_\circ$ lie in a rank two Satake diagram and $j\in \I_{\mathrm{ns}}$ where there exist non-trivial characters with $\chi(B_j)\neq0$. This rules out the Satake diagrams where there exist subdiagrams of type $\mathsf{AIV}$ with $\I_\bullet\neq \emptyset$. Hence using Proposition \ref{cor:herm2} it remains to check (\ref{eq:chisym}) for the rank two diagrams where $(\mathfrak{g},\mathfrak{k})$ is of Hermitian type and there exists $i\in \I_{\mathrm{ns}}$. By $\cite[\mbox{X. \S6.3}]{Helga98}$ these are the diagrams of type $ \mathsf{AIII}_3,$ $\mathsf{BI}_n,$ $\mathsf{CI}_2$, $\mathsf{DI}_n,$ $\mathsf{DIII}_4.$
	We label the roots as in \cite[\mbox{Table 3}]{Wang2023}.
	\subsection{Type $\boldmath{\mathsf{AIII}_3}$}\label{subsec:AI}
	\addtocontents{toc}{\protect\setcounter{tocdepth}{1}}
	We deduce from \cite[\mbox{Table 3}]{Wang2023} that $$\boldsymbol{\widetilde{T}}'_{1,-1}(B_2)=[B_3,[B_1,B_2]_q]_q-q B_2 \mathcal K_3.$$
	In this case $\mathcal K_3=K_3K_{\tau(3)}'$, which is mapped to $-q^{-1}$ under the central reduction \cite[\mbox{Table 1}]{Wang2023}. It follows that 
	$$\boldsymbol{T}'_{1,-1}(B_2)=[B_3,[B_1,B_2]_q]_2+ B_2 .$$
	As $\chi(B_1)=\chi(B_3)=0$, it follows that
	$\chi (\boldsymbol{T}'_{1,-1}(B_2))= \chi(B_2).$
	\subsection{Type $\boldmath{\mathsf{BI}_n}$}
	We see in \cite[\mbox{Table 3}]{Wang2023} that $$\boldsymbol{\widetilde{T}}'_{2,-1}(B_1)=[\widetilde{\mathcal T}_{w_\bullet}(B_2),[B_2,B_1]_{q_2}]_{q_2}-q_2 B_1 \widetilde{\mathcal T}_{w_\bullet}(\mathcal K_2).$$
	In this case $\widetilde{\mathcal T}_{w_\bullet}(\mathcal K_2)$ is mapped to $-q_2^{-1}$ under the central reduction \cite[\mbox{Table 1}]{Wang2023}. As $\chi(B_2)=0$, it follows that 
	$\chi (\boldsymbol{T}'_{2,-1}(B_1))= \chi(B_1).$
	\subsection{Type $\boldmath{\mathsf{CI}_2}$}
We see in \cite[\mbox{Table 3}]{Wang2023} that $$\boldsymbol{\widetilde{T}}'_{1,-1}(B_2)=\frac{1}{[2]_{q_1}}[B_1,[B_1,B_2]_{q_1^2}]-q_1^2B_2\mathcal K_1.$$
	In this case $\mathcal K_1$ is mapped to $-q_1^{-2}$ under the central reduction \cite[\mbox{Table 1}]{Wang2023}. Thus it follows that 
	$\chi (\boldsymbol{\widetilde{T}}'_{1,-1}(B_2))= \chi(\frac{1}{[2]_{q_1}}[B_1,[B_1,B_2]_{q_1^2}]+B_2)=\chi(B_2).$
	\subsection{Type $\boldmath{\mathsf{DI}_n, \, n\geq 5}$} 
	We see in \cite[\mbox{Table 3}]{Wang2023} that $$\boldsymbol{\widetilde{T}}'_{2,-1}(B_1)=[\widetilde{\mathcal {T}}_{w_\bullet}(B_2),[B_2,B_1]_q]_q-q B_1  \widetilde{\mathcal T}_{w_\bullet}(K_2).$$
	Here $\ \widetilde{\mathcal T}_{w_\bullet}(K_2)$ is mapped to $-q^{-1}$ under the central reduction, see \cite[\mbox{Table 1}]{Wang2023}. As $\chi(B_2)=0$, it follows that
	$\chi (\boldsymbol{T}'_{2,-1}(B_1))= \chi(B_1).$
	\subsection{Type $\boldmath{\mathsf{DIII}_4}$ }\label{sub:DIII} 
	We see in \cite[\mbox{Table 3}]{Wang2023} that $$\boldsymbol{\widetilde{T}}'_{2,-1}(B_1)=[\widetilde{\mathcal {T}}_{w_\bullet}(B_2),[B_2,B_1]_q]_q-q B_1  \widetilde{\mathcal T}_{w_\bullet}(K_2).$$
	Here,$\ \widetilde{\mathcal T}_{w_\bullet}(K_2)$ is mapped to $-q^{-1}$ under the central reduction, see \cite[\mbox{Table 1}]{Wang2023}. As $\chi(B_2)=0$, it follows that that
	$\chi (\boldsymbol{T}'_{2,-1}(B_1))= \chi(B_1).$
	\section{Proof of Lemma \ref{lem:lusztiglong}}\label{apenB}
	This appendix contains the proof of Lemma \ref{lem:lusztiglong}.
	We restrict to the case that $\I_\bullet \neq \emptyset$, because the statement is trivial otherwise. For each $i \in \I_\circ$, the elements $T_{w_\bullet,\pm1}''(E_i)$ only depend on the rank one Satake subdiagram induced by $i$. Henceforth, we show that in each of the rank one Satake diagrams it holds that $T_{w_\bullet,+1}''(E_i)v=T_{w_\bullet,-1}''(E_i)v$. This is done by a case-by-case analysis, since we were not able to find a uniform proof. The labeling of the roots is as in \cite[\mbox{Table 1}]{Wang2023}. Recall from \cite[\mbox{37.1.3}]{Lusztig2010}, that for $k ,j\in\I$ with $k\neq j$ it holds that
	\begin{align}\label{eq:lusztig}
		T''_{k,e}(E_j)=\sum _{r+s=-\langle \alpha_k^\vee,\alpha_j\rangle}(-1)q_k^{-er}E_k^{(s)}E_jE_k^{(r)}.
	\end{align}
	\subsection{Type $\mathsf{AII_3}$}
	For type $\mathsf{AII_3}$, we have $w_\bullet=s_1s_3$. Let $e=\pm1$. Using Lemma \ref{lem:uqbullet} and (\ref{eq:lusztig}) we have
	$$T''_{w_\bullet,e}(E_2)v=T''_{1,e}T''_{3,e}(E_2)v=T''_{1,e}(E_3E_2)v=E_3E_1E_2v.$$
	\subsection{Type $\mathsf{AIV}$, $n\geq 2$}
	Using \cite[\mbox{Table 1}]{Benkart2012}, there exists a reduced expression of $w_\bullet $ given by
	$$w_{\bullet }=(s_{n-1}s_{n-2}\dots s_{2})(s_{n-1}\dots s_{3} )( s_{n-1}\dots s_{4})\dots (s_{n-1} s_{n-2}).$$
	We remark that $T_{(s_{n-1}\dots s_{3} )( s_{n-1}\dots s_{4})\dots (s_{n-1} s_{n-2}),e}''(E_1)=E_1$. Thus, using Lemma \ref{lem:uqbullet} and (\ref{eq:lusztig}), one has
	\begin{align*}
		T_{w_{\bullet} ,e}''(E_1)v&=T_{s_{n-1}\dots s_3,e}''\circ T''_{{s_2},e}(E_1)v\\
		&=\sum _{r+s=-\langle \alpha_{2}^\vee,\alpha_1\rangle}T_{s_{n-1}\dots s_3,e}''\big((-1)^rq^{-er}E_{2}^{(s)}E_1E_{2}^{(r)}\big)v\\
		&=T_{s_{n-1}\dots s_3,e}''\big(E_{2}E_1\big)v\\
		&=T_{s_{n-1}\dots s_3,e}''\big(E_{2}\big)E_1v\\
		&=\sum _{r+s=-\langle \alpha_{3}^\vee,\alpha_2\rangle}T_{s_{n-1}\dots s_4,e}''\big((-1)^rq^{-er}E_{3}^{(s)}E_2E_{3}^{(r)}\big)E_1v\\ 
		&=T_{s_{n-1}\dots s_4,e}''\big(E_{3}\big)E_2E_1v\\
		&=\dots\\
		&=E_{n-1}\dots E_2E_1v,
	\end{align*}
	which is independent of $e$. The proof for $T_{w_{\bullet} ,e}''(E_n)v$ is analogous.
	\subsection{Type $\mathsf{BII}$, $\mathsf{CII}$ or $\mathsf{DII}$}
	We prove the statement for the Satake diagram of type $\mathsf{BII}$. The proofs for the types $\mathsf{CII}$ and $\mathsf{DII}$ are analogous. For $m\leq n$ define $\I_m:=\{m,\dots n\}$. We write $w_{\I_m}$ to be the corresponding longest element, the corresponding diagram involution $\varsigma_m:\I_m\to \I_m$ equals the identity. Using \cite[\mbox{Table 1}]{Benkart2012}, a reduced expression of $w_\bullet$ is given by $w_\bullet = s_2\dots s_n\dots s_2w_{\I_3}$. We define $w'=s_2w_\bullet$.  Because $w_\bullet \alpha_{2}=- \alpha_{2}$, we have $w'\alpha_{2}=\alpha_{2}$. Using Lemma \ref{lem:uqbullet}, (\ref{eq:lusztig}) and \cite[\mbox{Prop 8.20}]{Jantzen1996}, we have {\allowdisplaybreaks
		\begin{align*}
			T_{w_{\bullet} ,e}''(E_1)v&=T_{w',e}''\circ T''_{{2},e}(E_1)v\\
			&=T_{w',e}''(E_{2}E_1)v\\
			&=E_{2}T_{w',e}''(E_1)v\\
			&=E_{2}T_{w_{\I_3},e}''(E_2)E_1v\\
			&=E_{2}E_3T_{w_{\I_4},e}''(E_3)E_2E_1v\\
			&=\dots\\
			&=E_{2}E_3\dots E_{n-1}E_n^{(2)}E_{n-1}\dots E_2E_1v.
	\end{align*}}
	\subsection{Type $\mathsf{FII}$}
	The longest word $w_\bullet$ is the longest word of the Weyl group of type $\mathsf{B_3}$. By \cite[\mbox{Table 1}]{Benkart2012}, a reduced expression of $w_\bullet$ is given by $s_1s_2s_3s_2s_1s_2s_3s_2s_3$. Remark that
	$$s_1s_2s_3s_2s_1(\alpha_2)=\alpha_2\qquad\text{and}\qquad s_1s_2s_3s_2s_1(\alpha_3)=\alpha_3.$$
	Let $e=\pm1$. Using \cite[\mbox{Prop 8.20}]{Jantzen1996}, we have{\allowdisplaybreaks
		\begin{align*}
			T_{w_\bullet,e}''(E_4)v&=T_{w_\bullet s_3^{-1},e}''(E_3E_4)v\\
			&=E_3T_{w_\bullet s_3^{-1},e}''(E_4)v\\
			&=E_3T_{s_1s_2s_3s_2s_1s_2,e}''(E_3E_4)v\\
			&=E_3T_{s_1s_2s_3s_2s_1s_2,e}''(E_3)T_{s_1s_2s_3s_2s_1s_2,e}''(E_4)v\\
			&=E_3T_{s_1s_2s_3s_2s_1s_2,e}''(E_3)E_1E_2E_3E_4v\\
			&=E_3T_{s_1s_2s_3s_2s_1,e}''(E_2)T_{s_1s_2s_3s_2s_1,e}''(E_3)E_1E_2E_3E_4v\\
			&=E_3E_2E_3E_1E_2E_3E_4v.
	\end{align*}}
\end{appendices}
\begin{center}
	$\textsc{Conflict of Interest}$
\end{center}
The author has no conflict of interest to declare that are relevant to this article.
\begin{center}
	$\textsc{Data Availability}$
\end{center}
 Data sharing not applicable to this article as no data sets were generated or analysed during the current study.
\bibliographystyle{alpha}
\bibliography{Reff}

\begin{thebibliography}{BKOP14}

\bibitem[AKR17]{Aldenhoven2017}
Noud Aldenhoven, Erik Koelink, and Pablo Roman.
\newblock {Branching Rules for Finite-Dimensional
  $\mathbf{U}_q(\mathfrak{su}(3))$-Representations with Respect to a Right
  Coideal Subalgebra}.
\newblock {\em Algebras and Representation Theory}, 20(4):821--842, Mar 2017.

\bibitem[BK15]{Balagovic2015}
Martina Balagovi{\'{c}} and Stefan Kolb.
\newblock {The bar involution for quantum symmetric pairs}.
\newblock {\em Representation Theory of the American Mathematical Society},
  19(8):186--210, Oct 2015.

\bibitem[BK16]{Balagovic2016}
Martina Balagovi{\'{c}} and Stefan Kolb.
\newblock {Universal K-matrix for quantum symmetric pairs}.
\newblock {\em Journal f{\"u}r die reine und angewandte Mathematik (Crelles
  Journal)}, 2019(747):299--353, Jul 2016.

\bibitem[BKOP14]{Benkart2012}
Georgia Benkart, Seok-Jin Kang, Se-jin Oh, and Euiyong Park.
\newblock {Construction of Irreducible Representations over
  Khovanov--Lauda--Rouquier Algebras of Finite Classical Type}.
\newblock {\em International Mathematics Research Notices}, 2012(5):1312--1366,
  Nov 2014.

\bibitem[BW18]{Bao2018}
Huanchen Bao and Weiqiang Wang.
\newblock {Canonical bases arising from quantum symmetric pairs}.
\newblock {\em Inventiones mathematicae}, 213(3):1099--1177, Apr 2018.

\bibitem[DK19]{Dobson2019}
Liam Dobson and Stefan Kolb.
\newblock {Factorisation of quasi $K$-matrices for quantum symmetric pairs}.
\newblock {\em Selecta Mathematica}, 25(4):Article: 63, Oct 2019.

\bibitem[Hel98]{Helga98}
Sigurdur Helgason.
\newblock {\em {Differential geometry, Lie groups, and symmetric spaces}}.
\newblock Number~80 in Pure and applied mathematics. Academic Press, San Diego,
  1998.

\bibitem[HS95]{Heckman1995}
Gerrit Heckman and Henrik Schlichtkrull.
\newblock {\em {Harmonic analysis and special functions on symmetric spaces}}.
\newblock Number~16 in Perspectives in mathematics. Academic Press, San Diego,
  1995.

\bibitem[Jan96]{Jantzen1996}
Jens~Carsten Jantzen.
\newblock {\em {Lectures on quantum groups}}.
\newblock Number volume 6 in Graduate studies in mathematics. American
  Mathematical Society, Providence, Rhode Island, 1996.

\bibitem[JL94]{Joseph1994}
Anthony Joseph and Gail Letzter.
\newblock {Separation of Variables for Quantized Enveloping Algebras}.
\newblock 116(1):127, Feb 1994.

\bibitem[Kas93]{Kashiwara1993}
Masaki Kashiwara.
\newblock {Global crystal bases of quantum groups}.
\newblock {\em Duke Mathematical Journal}, 69(2):455--485, Feb 1993.

\bibitem[Kol14]{Kolb2014}
Stefan Kolb.
\newblock {Quantum symmetric Kac{\textendash}Moody pairs}.
\newblock {\em Advances in Mathematics}, 267:395--469, Dec 2014.

\bibitem[Kol23]{Kolb2023}
Stefan Kolb.
\newblock {A brief introduction to quantum symmetric pairs}.
\newblock {\em OCAMI Reports 2023}, 005:5--21, Sep 2023.

\bibitem[Koo93]{Koornwinder1993}
Tom~H. Koornwinder.
\newblock {Askey{\textendash}Wilson Polynomials as Zonal Spherical Functions on
  the SU(2) Quantum Group}.
\newblock {\em {SIAM} Journal on Mathematical Analysis}, 24(3):795--813, May
  1993.

\bibitem[Kum02]{Kumar2002}
Shrawan Kumar.
\newblock {\em {Kac-Moody Groups, their Flag Varieties and Representation
  Theory}}.
\newblock Birkhäuser Boston, Sep 2002.

\bibitem[Let99]{Letzter1999}
Gail Letzter.
\newblock {Symmetric Pairs for Quantized Enveloping Algebras}.
\newblock {\em Journal of Algebra}, 220(2):729--767, Oct 1999.

\bibitem[Let00]{Letzter2000}
Gail Letzter.
\newblock {Harish-Chandra modules for quantum symmetric pairs}.
\newblock {\em Representation Theory of the American Mathematical Society},
  4(5):64--96, Feb 2000.

\bibitem[Let03]{Letzter2003}
Gail Letzter.
\newblock {Quantum Symmetric Pairs and Their Zonal Spherical Functions}.
\newblock {\em Transformation Groups}, 8(3):261--292, Sep 2003.

\bibitem[Let04]{Letzter2004}
Gail Letzter.
\newblock {Quantum zonal spherical functions and {M}acdonald polynomials}.
\newblock {\em Advances in Mathematics}, 189(1):88--147, Dec 2004.

\bibitem[Lus10]{Lusztig2010}
George Lusztig.
\newblock {\em {Introduction to Quantum Groups}}.
\newblock Birkh{\"a}user Boston, 3rd edition, 2010.

\bibitem[Mac03]{Macdonald2003}
I.~G. Macdonald.
\newblock {\em {Affine Hecke Algebras and Orthogonal Polynomials}}.
\newblock Cambridge University Press, Mar 2003.

\bibitem[MS99]{Mueller1999}
E.~F. M{\"u}ller and H.~J. Schneider.
\newblock {Quantum homogeneous spaces with faithfully flat module structures}.
\newblock {\em Israel Journal of Mathematics}, 111(1):157--190, Dec 1999.

\bibitem[Sch84]{Schlichtkrull1984}
Henrik Schlichtkrull.
\newblock {One-dimensional $K$-types in finite dimensional representations of
  semisimple Lie groups: A generalization of Helgason's theorem.}
\newblock {\em Mathematica Scandinavia}, 54:279, Jun 1984.

\bibitem[Sto98]{Stok98}
J.V. Stokman.
\newblock {Multivariable orthogonal polynomials and quantum Grassmannians}.
\newblock {\em CWI Tracts}, Sep 1998.

\bibitem[Wat18]{Watanabe2018}
Hideya Watanabe.
\newblock {Crystal Basis Theory for a Quantum Symmetric Pair
  $(\mathbf{U},\mathbf{U}^{\jmath })$}.
\newblock {\em International Mathematics Research Notices}, 22:1099--1177, Oct
  2018.

\bibitem[Wat21]{Watanabe2021}
Hideya Watanabe.
\newblock {Classical weight modules over ıquantum groups}.
\newblock {\em Journal of Algebra}, 578:241--302, Jul 2021.

\bibitem[Wat23a]{watanabe2022stability}
Hideya Watanabe.
\newblock {Stability of $\imath$-canonical bases of irreducible finite type of
  real rank one}.
\newblock {\em Representation Theory of the American Mathematical Society},
  27(1):1--29, Mar 2023.

\bibitem[Wat23b]{Watanabe2023}
Hideya Watanabe.
\newblock {Stability of $\imath$canonical bases of locally finite type}
  \hyperlink{https://arxiv.org/abs/2306.12199}{arXiv:2306.12199}.
\newblock Jun 2023.

\bibitem[Wat24]{Watanabe2024}
Hideya Watanabe.
\newblock Integrable modules over quantum symmetric pair coideal subalgebras
  \hyperlink{https://arxiv.org/abs/2407.07280}{arXiv.2407.07280}, Jul 2024.

\bibitem[WZ23]{Wang2023}
Weiqiang Wang and Weinan Zhang.
\newblock {An intrinsic approach to relative braid group symmetries on
  $\imath$quantum groups}.
\newblock {\em Proceedings of the London Mathematical Society}, 127:1338--1423,
  Sep 2023.

\end{thebibliography}
\addcontentsline{toc}{section}{References}
\end{document}